\theoremstyle{plain}
\DeclareMathOperator{\Int}{Int}
\DeclareMathOperator{\Sym}{Sym}
\DeclareMathOperator{\PD}{PD}
\newtheorem{theorem}{Theorem}[section]
\newtheorem{lemma}[theorem]{Lemma}
\newtheorem{proposition}[theorem]{Proposition}
\newtheorem{corollary}[theorem]{Corollary}
\newtheorem{conjecture}[theorem]{Conjecture}
\newtheorem{remark}[theorem]{Remark}
\theoremstyle{definition}
\newtheorem{definition}[theorem]{Definition}
\newcommand{\ons}{{Ozsv{\'a}th} and {Szab{\'o}} }
\newcommand{\A}{ {\bf A}}
\newcommand{\Af}{ {\bf A}^{\mathfrak{f}}}
\newcommand{\M}{ {\bf M}}
\newcommand{\Mf}{ {\bf M}^{\mathfrak{f}}}
\newcommand{\gr}{{\Mf}} 
\newcommand{\wtgr}{{\Mf}} 
\newcommand{\Sp}{ {\bf S}}
\newcommand{\Sf}{ {\bf S}^{\mathfrak{f}}}
\newcommand{\Z}{\mathbb{Z}}
\newcommand{\R}{\mathbb{R}}
\newcommand{\Q}{\mathbb{Q}}
\newcommand{\XX}{\mathbb{X}}
\newcommand{\OO}{\mathbb{O}}
\newcommand{\bdry}{\ensuremath{\partial}}
\newcommand{\T}{\mathbb{T}}
\renewcommand{\t}{\ensuremath{\mathfrak{t}}}
\newcommand{\s}{\ensuremath{\mathfrak{s}}}
\newcommand{\us}{\ensuremath{{\underline{\mathfrak{s}}}}}
\newcommand{\Spin}{\ensuremath{{\mbox{Spin}}}}
\newcommand{\uSpin}{\underline{\ensuremath{{\mbox{Spin}}}}}
\newcommand{\x}{\ensuremath{{\bf x}}}
\newcommand{\y}{\ensuremath{{\bf y}}}
\newcommand{\Par}{P}
\newcommand{\PG}{PG}
\title[]{Grid Diagrams for lens spaces and combinatorial knot Floer homology}
\author[Kenneth L.\ Baker]{Kenneth L.\ Baker}
\address{Kenneth L.\ Baker \newline\indent School of Mathematics \newline\indent Georgia Institute of Technology \newline\indent Atlanta, Georgia 30332}
\email{\rm{kb@math.gatech.edu}}
\author[J. Elisenda Grigsby]{J. Elisenda Grigsby}
\address{J. Elisenda Grigsby\newline\indent Department of Mathematics\newline\indent Columbia University\newline\indent 2990 Broadway MC4406\newline\indent NY, NY 10027}
\email{\rm{egrigsby@math.columbia.edu}}
\author[Matthew Hedden]{Matthew Hedden}
\address{Matthew Hedden\newline\indent Department of Mathematics\newline\indent Massachusetts Institute of Technology\newline\indent Building 2, Room 236\newline\indent 77 Massachusetts Avenue\newline\indent Cambridge, MA 02139-4307}
\email{\rm{mhedden@math.mit.edu}}
\begin{document}

\begin{abstract} Similar to knots in $S^3$, any knot in a lens space has a grid diagram from which one can combinatorially compute all of its knot Floer homology invariants.  We give an explicit description of the generators, differentials, and rational Maslov and Alexander gradings in terms of combinatorial data on the grid diagram.  Motivated by existing results for the Floer homology of knots in $S^3$ and the similarity of the resulting combinatorics presented here, we conjecture that a certain family of knots is characterized by their Floer homology.  Coupled with work of the third author, an affirmative answer to this would prove the Berge conjecture, which catalogs the knots in $S^3$ admitting lens space surgeries. 
\end{abstract}

\maketitle

\section{Introduction}\label{section:Introduction}
The Heegaard Floer homology package is a powerful collection of invariants of knots, links, and $3$-- and $4$--manifolds.  Although the generators of the chain complexes  used to define these invariants are combinatorial, the differential involves a count of J-holomorphic curves in a symplectic manifold.  Until recently, there was no combinatorial method to perform this count.

In 2006, Sarkar made the revolutionary observation that if a particular type of Heegaard diagram could be found, a general enumeration of the holomorphic curves counted by the resulting chain complex or chain map was possible.  To describe this method, recall that a pointed Heegaard diagram is a collection of data:
$$(\Sigma,\vec{\alpha}=\{\alpha_1,\ldots,\alpha_k\},\vec{\beta}=\{\beta_1,\ldots,\beta_k\}, \vec{z}=\{z_1,\ldots,z_l\}),$$
where $\Sigma$ is a closed Heegaard surface for a three-manifold $Y$, $\vec{\alpha},\vec{ \beta}\subset \Sigma$ are collections of simple closed curves bounding disks in the Heegaard splitting, and $\vec{ z}\subset \Sigma$ is a collection of points which can be used to specify knots and links in $Y$.
Sarkar showed that if all the connected components (regions) of $\Sigma -\vec{\alpha} -\vec{\beta}$ are $2$-- or $4$--sided polygons then any holomorphic curve relevant to the differential could be identified combinatorially,
via a formula of Lipshitz \cite{Lipshitz}.

In this level of generality it is clear, however, that the idea cannot work for every example.  This is because the sphere and torus are the only surfaces which can be decomposed as cell complexes consisting solely of polygons with $2$ or $4$ sides, while the only $3$--manifolds with Heegaard genus $0$ or $1$ are the lens spaces.  In \cite{GT0607777}, Sarkar and Wang exhibit an algorithm to compute a restricted version (the so-called ``hat'' theory) of the knot Floer homology for an arbitrary $3$--manifold by finding a Heegaard diagram with a single ``bad'' region (many-sided polygon) which is not counted in the differential.

On the other hand, restricting to knots in lens spaces allows for a computation of the full range of invariants.

 In the case of $S^3$, this approach was taken in a paper of Manolescu, Ozsv{\'a}th, and Sarkar \cite{GT0607691}.  In addition to allowing for the computation of the filtered chain homotopy type of $CF^{-}(S^3,K)$ - the most robust of the knot and link invariants - their approach was ground-breaking in its combinatorial simplicity.  Indeed, generators are identified with elements of $S_n$, the symmetric group on $n$ letters, and differentials count domains associated to elements differing by simple transpositions.  Possible connections to representation theory are tantalizing.

The purpose of this paper is to extend the combinatorial algorithm for computing the Floer homology of knots in $S^3$ in terms of grid diagrams to the case of knots in an arbitrary lens space (we do not treat the case of $S^1\times S^2$ here, as the setup is quite different).  In fact, the algorithm of \cite{GT0607691} carries over in a somewhat straightforward manner.  In Section~\ref{section:GridDiagrams} we describe grid position for knots in lens spaces and show how to pass from these grid positions to the relevant Heegaard diagrams, which we call {\em twisted toroidal grid diagrams} or simply {\em grid diagrams}.  These are the analogues of the toroidal grid diagrams used by \cite{GT0607691} to compute the Floer homology of knots in $S^3$.   The main theorem of this paper is the following:

\begin{theorem} \label{thm:main}
Let $K \subset L(p,q)$ be an arbitrary knot in a lens space.   Then $K$ may be put into grid position and admits an associated twisted toroidal grid diagram.  Moreover the filtered chain homotopy type of $CF^-(L(p,q),K)$ as a $\mathbb{Z}_2[U]$-module can be computed in terms of the combinatorics of the grid diagram.
\end{theorem}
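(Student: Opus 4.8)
The plan is to follow the argument of Manolescu, Ozsv\'{a}th, and Sarkar \cite{GT0607691} for knots in $S^3$ essentially step for step, inserting the modifications forced by the identification that builds $L(p,q)$ from two solid tori. The proof splits into a geometric half --- producing grid position and the associated twisted toroidal grid diagram, carried out in Section~\ref{section:GridDiagrams} --- and an algebraic half, identifying the knot Floer complex of such a diagram with an explicit combinatorial object; the genuinely new work lies in the geometric half. For grid position, fix the genus-one Heegaard splitting $L(p,q) = V_{\alpha}\cup_{\Sigma}V_{\beta}$, with $\mu_\alpha,\mu_\beta\subset\Sigma$ the meridians of the two solid tori. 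I would first isotope $K$ so that, for some positive integer $n$ (the grid number), it becomes a cyclic concatenation of $2n$ arcs --- $n$ ``horizontal'' arcs running parallel in $\Sigma$ to $\mu_\alpha$ and then pushed slightly into $V_\alpha$, together with $n$ ``vertical'' arcs running parallel to $\mu_\beta$ and pushed slightly into $V_\beta$ --- meeting in pairs at their endpoints on $\Sigma$. Roughly, one isotopes $K$ transverse to $\Sigma$, minimizes $|K\cap\Sigma|$, flattens the resulting arc systems in $V_\alpha$ and in $V_\beta$ against parallel families of meridian disks, and clears the crossings of the projection to $\Sigma$ by the analogue of a grid stabilization, much as in the planar case. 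The one feature special to lens spaces is that a column of the resulting grid does not close up after a single fundamental domain of $\Sigma$ but only after a shear by $q$ columns modulo $p$; this is just the attaching data of $V_\beta$ rewritten in grid coordinates, and it must be tracked throughout.

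With $K$ in grid position, insert parallel copies $\alpha_1,\dots,\alpha_n$ of $\mu_\alpha$ and $\beta_1,\dots,\beta_n$ of $\mu_\beta$ separating consecutive parallel arcs of $K$, and place markings $\XX = \{X_1,\dots,X_n\}$ and $\OO = \{O_1,\dots,O_n\}$ at the corners where a horizontal arc of $K$ meets a vertical arc; this is the twisted toroidal grid diagram $\mathcal{G} = (\Sigma,\vec\alpha,\vec\beta,\XX,\OO)$. It is routine to verify that $(\Sigma,\vec\alpha,\vec\beta)$ is a Heegaard diagram for $L(p,q)$ --- the $\alpha_i$ bound disks in $V_\alpha$, the $\beta_j$ in $V_\beta$ --- and that the union of the arc through the $\OO$'s in $V_\alpha$ with the arc through the $\XX$'s in $V_\beta$ is isotopic to $K$, so that $\mathcal{G}$ is a multiply pointed Heegaard diagram for $(L(p,q),K)$. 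Everything downstream rests on one observation, immediate from the construction: because $\vec\alpha$ and $\vec\beta$ are two straight parallel families of curves on the torus, \emph{every} region of $\Sigma-\vec\alpha-\vec\beta$ is an embedded square.

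Since all regions are squares, Sarkar's observation together with Lipshitz's formula \cite{Lipshitz} shows that the domains supporting index-one holomorphic disks between generators are precisely the empty embedded rectangles of $\mathcal{G}$, and the complex $CFK^-(\mathcal{G})$ can be written down explicitly: generators are the $n$-tuples of intersection points hitting each $\alpha_i$ and each $\beta_j$ exactly once; the differential sends a generator to the sum of the generators reachable through an empty rectangle, each such rectangle weighted by the monomial in $U_1,\dots,U_n$ recording its multiplicities at the $\OO$-basepoints; and the Alexander filtration is measured by the multiplicities at the $\XX$-basepoints (rectangles through an $\XX$ being the filtration-lowering ones, excluded entirely from $\widehat{CFK}$). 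Setting $U_1 = \dots = U_n = U$, legitimate because the $U_i$ are pairwise chain homotopic, yields the $\mathbb{Z}_2[U]$-module structure. Since $L(p,q)$ is a rational homology sphere and $K$ need not be null-homologous, the complex splits as a direct sum over the $p$ elements of $\Spin^c(L(p,q))$ --- the summand of a generator being read off from its grid coordinates --- and the Maslov and Alexander gradings take values in cosets of $\mathbb{Q}$; these I would compute from the usual difference formulas for rectangles together with overall normalizations matching, for the Maslov grading, the correction terms of $L(p,q)$, and for the Alexander grading, the relative homology class of $K$. The rectangle count is manifestly finite and the diagram is easily made weakly admissible, so invariance of knot Floer homology under isotopy, handleslides, and stabilization identifies this combinatorial filtered $\mathbb{Z}_2[U]$-complex with the filtered chain homotopy type of $CF^-(L(p,q),K)$.

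The step carrying the genuinely new content is establishing grid position: one must show that the flatten-and-stabilize procedure terminates in the asserted normal form for \emph{every} knot and that the $q$-shear gluing is respected throughout, so that the output is always a bona fide twisted toroidal grid diagram. A secondary, more bookkeeping-flavored difficulty --- with no counterpart in the $S^3$ story, where everything is absolutely $\mathbb{Z}$-graded and supported in a single $\Spin^c$ structure --- is extracting and normalizing the combinatorial formulas for the $\Spin^c$ splitting and for the rational Maslov and Alexander gradings, and checking them against the known invariants of $L(p,q)$. Once all regions are known to be squares and admissibility is arranged, the algebraic identification is essentially formal, running verbatim as in \cite{GT0607691}.
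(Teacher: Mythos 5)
Your overall route coincides with the paper's: put $K$ in grid position, observe that every complementary region of the resulting twisted toroidal diagram is a quadrilateral so that the Sarkar--Wang/Lipshitz mechanism \cite{GT0607777,Lipshitz} reduces the differential to a count of empty parallelograms, check admissibility, and invoke invariance under Heegaard moves to identify the combinatorial complex with $CF^-(L(p,q),K)$; this is exactly Propositions~\ref{prop:existence} and~\ref{prop:Boundary}. There are, however, two places where what you write would not go through as stated. First, your construction of grid position --- isotope $K$ transverse to $\Sigma$, minimize $|K\cap\Sigma|$, then ``flatten the resulting arc systems against parallel families of meridian disks'' --- rests on a false premise: a properly embedded arc in a solid torus realizing the minimal intersection number with $\Sigma$ need not be isotopic rel boundary into a single meridian disk (it can wind around the core), so after minimizing you cannot in general flatten without first subdividing further. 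The paper sidesteps this entirely by working two-dimensionally: take a knot \emph{diagram} on the Heegaard torus, isotope it to a rectilinear curve built from alternately horizontal and slope $-\frac{p}{q}$ segments with all undercrossings horizontal (Lemma~\ref{lem:rectilinear}), and then push the segments onto distinct circles of each ruling (Proposition~\ref{prop:existence}). Your version is likely repairable, but as written the key step you yourself identify as ``the genuinely new content'' does not work.

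Second, and more seriously, your treatment of the gradings is not a proof. The filtered chain homotopy type requires the absolute rational Maslov and Alexander gradings, and ``overall normalizations matching the correction terms of $L(p,q)$'' and ``the relative homology class of $K$'' is not a combinatorial recipe; the paper explicitly describes this identification as the bulk of the work. The missing idea is to lift $G_K$ to the universal cover: the lift is an honest planar grid diagram for $\widetilde{K}\subset S^3$, where the Manolescu--Ozsv{\'a}th--Szab{\'o}--Thurston formulas \cite{GT0610559} compute $\Mf$ and $\Af$ combinatorially; the Lee--Lipshitz covering relation $\Mf(\x)-\Mf(\y)=\frac{1}{p}\bigl(\Mf(\widetilde{\x})-\Mf(\widetilde{\y})\bigr)$ of \cite{GT0608001} transfers these downstairs; the absolute Maslov normalization is fixed by a Maslov-index-zero holomorphic triangle connecting $\x_{\OO}$ to a standard generator of grading $d(p,q,q-1)-(n-1)$ (Lemma~\ref{lemma:AbsMasGr}); and the additive constant in the Alexander grading is pinned down by the orientation-reversal symmetry $\Af_{\XX,\OO}(\x)=-\Af_{\OO,\XX}(\x)-(n-1)$ of Lemma~\ref{lemma:Symmetry}. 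Without some such mechanism, the claim that the filtered chain homotopy type ``can be computed in terms of the combinatorics of the grid diagram'' is not established.
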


We postpone a precise description of the resulting chain complex until the next section, but content ourselves here to say that it closely resembles the chain complex for knots in $S^3$ given in \cite{GT0607691}.  Instead, we briefly discuss our motivation for the present generalization.  

The most obvious motivation for pursuing a combinatorial formula for the knot Floer homology invariants of lens space knots is the strength of these invariants. Link Floer homology in manifolds other than $S^3$ has been shown to detect the Thurston norm of a link complement \cite{GT0601618},\cite{GT0604360}, whether a knot is fibered \cite{GT0607156} and has had applications to questions related to the concordance classes of knots in $S^3$ through the double-branched covering operation \cite{GT0508065, GT0611023, GT0701460, GT07063398}.

Our main motivation, however, is in providing a foundation for a combinatorial approach to the Berge conjecture.   In \cite{Berge}, Berge describes a family of knots, which he calls {\em double primitive knots}, on which one can perform Dehn surgery and obtain a lens space.  These knots are characterized by the property that they can be embedded in the Heegaard surface of a genus two Heegaard splitting of $S^3$ in such a way that they represent a generator of the fundamental group of each handlebody of the splitting.  The Berge Conjecture is that any knot in $S^3$ which admits a lens space surgery is double primitive, \cite{Berge}. 

By shifting our perspective to the lens spaces, one can transform this conjecture into an infinite number of simpler conjectures i.e.\ we can try to prove that any knot in $S^3$ on which surgery can produce a particular lens space, say $L(43,5)$, is double primitive.  To make this more precise, note that upon performing the surgery on a double primitive knot $K\subset S^3$ which yields a lens space, there is a naturally induced knot $K'\subset L(p,q)$ in the lens space.  This knot is the core of the solid torus glued to $S^3-K$ during the surgery.  Berge showed that if $K\subset S^3$ is double primitive then the induced knot $K'\subset L(p,q)$ is a member of a particularly simple finite family.  In fact, $K'$ must be one of the $p$ knots which can be realized by a grid diagram of grid number one.  Furthermore, if surgery on a grid number one knot $K\subset L(p,q)$ yields the three-sphere, then the knot $K' \subset S^3$ induced by the surgery is double primitive.   (Let us say a knot {\em has grid number one} if it may be represented by a grid number one grid diagram.)   Thus, we have the following equivalent form of the Berge conjecture, which was originally stated as a question \cite{Berge}.

\begin{conjecture}\cite{Berge} Suppose that surgery on $K \subset L(p,q)$ yields the three-sphere.  Then $K$ has grid number one.
\end{conjecture}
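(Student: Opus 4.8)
The plan is to reduce Berge's conjecture to a rigidity statement for knot Floer homology and then to attack that statement with the combinatorial machinery of Theorem~\ref{thm:main}. Suppose $K \subset L(p,q)$ is a knot on which some Dehn surgery produces $S^3$. Dually, $K$ is the core of the surgery solid torus for a knot $\kappa \subset S^3$ admitting an $L(p,q)$ surgery, and the first step is to extract strong constraints on the knot Floer homology of $K$ from this fact. Using the large surgery formula together with the mapping cone description of $\widehat{HF}$ of integer surgeries, the hypothesis that the surgered manifold is $S^3$ --- whose $\widehat{HF}$ has rank one --- forces the total rank of $\widehat{HFK}(L(p,q),K)$ to equal $p$, with the rational Maslov and Alexander gradings of its generators determined $\mathrm{Spin}^c$ structure by $\mathrm{Spin}^c$ structure. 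This is precisely the content of the third author's work relating Floer homology to the Berge conjecture; the upshot is that $K$ has the same bigraded knot Floer homology as the grid number one knot in $L(p,q)$ lying in the same homology class. Let us call a knot with this property \emph{Floer simple}.

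The essential remaining step is the converse rigidity statement: \emph{a Floer simple knot in $L(p,q)$ has grid number one.} This is where Theorem~\ref{thm:main} is brought to bear. An arbitrary grid presentation of $K$, say of grid number $n$, yields an explicit $\mathbb{Z}_2[U]$-chain complex whose generators are the toroidal permutations of the diagram and whose differential counts empty rectangles on the twisted torus, with all gradings given by the combinatorial formulas. The strategy is to show that if the homology of this complex is as small as it can be --- total rank $p$ in the hat theory, distributed one generator per $\mathrm{Spin}^c$ structure --- then the grid diagram admits a combinatorial destabilization, and then to iterate, lowering the grid number one step at a time until only grid number one remains. The analogous statement over $S^3$, that the unknot is the unique knot with the knot Floer homology of the unknot, is known, and the hope motivating this paper is that the parallel combinatorics exhibited here make a similarly grid-theoretic argument feasible in lens spaces.

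The hard part is exactly this destabilization step: converting the algebraic smallness of the homology into a geometric simplification of the grid. Heegaard Floer homology is in general poor at pinning down isotopy type on the nose, and here there are the additional complications that the gradings are only $\mathbb{Q}$-valued, that the coefficient ring is $\mathbb{Z}_2[U]$ rather than a field, and that a knot in a lens space need not be null-homologous, so the $\mathrm{Spin}^c$ and Alexander-grading bookkeeping is more delicate than in the classical case. One expects to need a combinatorial analysis --- via the explicit Maslov and Alexander grading formulas --- of which toroidal permutations can possibly survive in homology, then a lattice-point or linear-algebra argument forcing any grid whose complex has minimal homology to contain a reducible configuration, and finally a check, using the dictionary between grid position and Heegaard diagrams developed in Section~\ref{section:GridDiagrams}, that the relevant reduction of the grid is an honest isotopy of knots in $L(p,q)$. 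The first and last of these ought to be routine; the middle one is the crux, and is why the Berge conjecture remains open.
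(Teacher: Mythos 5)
The statement you were asked to prove is the Berge Conjecture itself; the paper does not prove it, and neither does your proposal. Your outline reproduces the paper's own two-step strategy almost verbatim: first use the surgery hypothesis to constrain $\widehat{HFK}(L(p,q),K)$ (this is Theorem~\ref{thm:simpleHFK}, due to the third author and, independently, Rasmussen), and then argue that a knot with minimal-rank knot Floer homology must have grid number one. That second step is exactly Conjecture~\ref{conj:gn1}, which is open, and you say as much yourself when you call the destabilization step ``the crux'' and note that the conjecture remains open. An honest assessment of a strategy is not a proof, so there is a genuine --- in fact total --- gap at the heart of the argument: no mechanism is supplied for converting ``total rank $p$, one generator per $\Spin^c$ structure'' into a combinatorial destabilization of an arbitrary grid diagram, and nothing in Theorem~\ref{thm:main} (which only computes the invariants from a given grid) produces one. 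The known proofs of the $S^3$ analogues (Theorems~\ref{thm:S3} and~\ref{thm:Ghiggini}) rely on symplectic-geometric input, not on grid combinatorics, so the ``parallel combinatorics'' you invoke are at present only a hope, as the paper itself emphasizes.

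There is also a smaller but substantive error in your first step. The surgery hypothesis does not force $rk(\widehat{HFK}(L(p,q),K)) = p$: Theorem~\ref{thm:simpleHFK} gives $p \ge 2g(K)-1$, with rank $p$ only when $p > 2g(K)-1$; in the boundary case $p = 2g(K)-1$ the rank is $p+2$. So even granting a proof of Conjecture~\ref{conj:gn1}, one must separately dispose of the rank-$(p+2)$ knots --- this is the paper's Conjecture~\ref{conj:T}, combined with the fact (from Section~4 of \cite{Hedden}) that the two model knots $T_1,T_2$ of rank $p+2$ do not admit $S^3$ surgeries. Your reduction silently discards this case.
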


One immediately observes that grid number one knots have ``simple knot Floer homology'': $$rk(\widehat{HFK}(L(p,q),K)) = rk(\widehat{HF}(L(p,q)).$$
Since there is a spectral sequence starting with knot Floer homology and converging to the Floer homology of the ambient three-manifold, the above equality can be informally described as ``grid number one knots have the smallest rank knot Floer homology possible''.

We are then led to consider the following strategy for proving the Berge Conjecture:
\begin{enumerate}
	\item Show that if surgery on $K\subset L(p,q)$ yields $S^3$, then $K$ has simple Floer homology in the above sense.
	\item Show that if $K\subset L(p,q)$ has simple Floer homology, then $K$ has grid number one.  
\end{enumerate}
Thus, the strategy can be described succinctly as 
$$ K\text{ has simple surgery} \implies K \text{ has simple knot Floer homology} \implies K\text{ is a simple knot}.$$

While at first sight this strategy may appear overly optimistic, we note that the first step has been carried out by the third author \cite{Hedden}, and independently by Rasmussen \cite{Rasmussen}: 
\begin{theorem}\label{thm:simpleHFK}\cite{Hedden,Rasmussen} Suppose that surgery on $K\subset L(p,q)$ yields $S^3$ and let $g(K)$ denote the genus of $K$. Then $p\ge 2g(K)-1$. Furthermore, \begin{itemize}
		\item If $p> 2g(K)-1$ then $rk(\widehat{HFK}(L(p,q),K)) = rk(\widehat{HF}(L(p,q)).$
		\item If $p= 2g(K)-1$ then $rk(\widehat{HFK}(L(p,q),K)) = rk(\widehat{HF}(L(p,q))+2.$
	\end{itemize}
		\end{theorem}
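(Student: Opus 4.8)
\emph{Idea of the proof, following \cite{Hedden,Rasmussen}.} The plan is to pass to the three-sphere, use the rigid structure of the knot Floer homology of a knot admitting a lens space surgery, and then run the rational surgery formula ``in reverse'' to read off $\widehat{HFK}(L(p,q),K)$. First I would record the correspondence: to say that some surgery on $K\subset L(p,q)$ yields $S^3$ is to say that $K$ is the core of the surgery solid torus for a knot $K'\subset S^3$ admitting a lens space surgery; after mirroring if necessary we may normalize so that $S^3_{p/q}(K')=L(p,q)$ with $q\ge1$, and then the class $[K]$ generates $H_1(L(p,q);\Z)\cong\Z/p\Z$, so $K$ is rationally null-homologous of order $p$, $\widehat{HFK}(L(p,q),K)$ splits over $\Spin^c(L(p,q))$, and within each summand there is a rational Alexander grading. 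Identifying a rational Seifert surface for $K$ with a genus-minimizing Seifert surface for $K'$ (the two knot complements agree) shows $g(K)=g(K')$, so write $g:=g(K)=g(K')$; recall also that $\widehat{HFK}(L(p,q),K)$ detects $g(K)$. Since $L(p,q)$ is an $L$-space, $K'$ is an $L$-space knot, so by the work of \ons\ the Alexander polynomial $\Delta_{K'}(t)$ has all coefficients in $\{0,\pm1\}$, $\widehat{HFK}(S^3,K')$ is supported on a single diagonal with total rank $2k+1$ and rank one in its top Alexander grading $g$, and $CFK^\infty(S^3,K')$ is filtered chain homotopy equivalent to a ``staircase'' complex whose corners are the jumps of $\Delta_{K'}$.

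Next I would invoke the rational surgery formula of \ons, read from the lens space side. It presents $\widehat{HF}\bigl(S^3_{p/q}(K')\bigr)$ as the homology of a mapping cone $\XX(p/q)$ assembled from the ``large surgery'' subquotient complexes $\hat A_s$ of $CFK^\infty(S^3,K')$ and copies of $\widehat{CF}(S^3)$, glued by the filtered projections $\hat v_s$ and $\hat h_s$; moreover the construction equips $\widehat{CF}(L(p,q))$ with the filtration induced by the dual knot $K$, so that the associated graded of $\XX(p/q)$ computes $\widehat{HFK}(L(p,q),K)$ with its rational Alexander grading. The $p$ direct summands of $\XX(p/q)$ are indexed by $\Spin^c(L(p,q))$, each has homology $\Z$, and in each the induced filtration is inherited from the internal Alexander gradings of the $\hat A_s$ that appear. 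Because $CFK^\infty(S^3,K')$ is a staircase, every $\hat A_s$ has homology $\Z$, with $\hat v_s$ an isomorphism for $s\gg0$ and $\hat h_s$ an isomorphism for $s\ll0$; between these two regimes lies a single ``transition band'' of $2g-1$ consecutive values of $s$.

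Finally I would carry out the count. Feeding the staircase into $\XX(p/q)$, all but at most one of the $p$ summands contributes exactly one surviving generator --- forcing $\widehat{HFK}(L(p,q),K,\spinc)$ to have rank one for all but at most one $\spinc$ --- while the exceptional summand, if present, contributes exactly three; hence $rk(\widehat{HFK}(L(p,q),K))\in\{p,\,p+2\}$. (This is consistent with the spectral sequence converging to $\widehat{HF}(L(p,q))$, which already forces the total rank to be $\ge p$ and $\equiv p\pmod2$.) Which case occurs is decided by comparing the width $2g-1$ of the transition band with the number $p$ of summands: when $p>2g-1$ there is room for the filtration levels of adjacent summands to stay disjoint and every summand is rank one, so $rk(\widehat{HFK}(L(p,q),K))=p$; when $p=2g-1$ the band exactly fills the available slots and the two complexes at its ends each donate an extra generator to a single summand, so $rk(\widehat{HFK}(L(p,q),K))=p+2$; and $p<2g-1$ cannot occur --- equivalently, a positive $L$-space surgery slope satisfies $p/q\ge2g-1$, whence $p\ge p/q\ge2g-1$ because $q\ge1$. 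Together these statements give $p\ge2g(K)-1$ and the two displayed formulas.

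The main obstacle is the bookkeeping in the last two steps: matching the $p$ elements of $\Spin^c(L(p,q))$ with the combinatorial slots of the mapping cone and tracking the rational Alexander grading through the maps $\hat v_s$ and $\hat h_s$ precisely enough to see which $\Spin^c$ structure (if any) picks up the extra rank, and why the answer jumps exactly at $p=2g-1$. The extreme rigidity of the staircase $CFK^\infty(S^3,K')$ is what makes this feasible, but controlling the rational gradings and the edge effects at the ends of the transition band is the heart of the argument.
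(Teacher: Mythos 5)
The paper contains no proof of this theorem --- it is imported verbatim from \cite{Hedden} and \cite{Rasmussen} --- so there is no internal argument to compare your sketch against; what you have written is a reasonable outline of the strategy actually used in those references (dual-knot correspondence, L-space knot rigidity, surgery formula). Two substantive comments. First, your claim that $\widehat{HFK}(S^3,K')$ is ``supported on a single diagonal'' is false: L-space knots are generally not thin (the $(3,4)$-torus knot already fails this). What you actually need, and what the staircase structure supplies, is only that each group $\widehat{HFK}(S^3,K',j)$ has rank at most one and that $CFK^\infty(S^3,K')$ has the staircase form determined by $\Delta_{K'}$. Second, the step you defer as ``bookkeeping'' is in fact the central theorem of \cite{Hedden} (with a parallel statement in \cite{Rasmussen}): one must identify the Alexander filtration that the dual knot $K$ induces on $\widehat{CF}(L(p,q),\spinc)$ with a concrete algebraic (two-step) filtration on the large-surgery subquotient complexes $C\{\max(i,j-s)=0\}$ of $CFK^\infty(S^3,K')$. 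Without that identification, the mapping cone computes $\widehat{HF}(L(p,q))$ but says nothing about $\widehat{HFK}(L(p,q),K)$; with it, the rank count you describe, together with the L-space surgery genus bound (noting that the surgery coefficient on $K'$ is $p/q'$ for some $q'\ge 1$ which need not equal the $q$ of $L(p,q)$, so $p\ge p/q'\ge 2g-1$), does yield the stated dichotomy. So the sketch has the right shape, but its heart is a theorem you have assumed rather than proved.
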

\begin{remark} In order for surgery on $K$ to produce $S^3$, $K$ must generate $H_1(L(p,q);\Z)\cong\Z/p\Z$.  As $K$ is not null-homologous, we should be careful to say what we mean by the genus.  Since surgery on $K$ produces $S^3$, the complement $L(p,q)-K$ is homeomorphic to the complement $S^3-K'$, for a knot $K'$ in $S^3$.  We define $g(K)$ to be the Seifert genus of $K'$.
\end{remark}
	
The Berge conjecture would then follow from:

\begin{conjecture}\label{conj:gn1}
Suppose that $K\subset L(p,q)$ satisfies $$rk(\widehat{HFK}(L(p,q),K)) = p.$$ Then $K$ has grid number $1$.
\end{conjecture}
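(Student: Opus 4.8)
The plan is to promote the Floer--homological hypothesis to a geometric constraint on the exterior of $K$ that is rigid enough to pin $K$ down to its grid number one representative. The starting point is that $L(p,q)$ is an L--space, so $\widehat{HF}(L(p,q),\mathfrak{s}) \cong \mathbb{Z}_2$ for each of its $p$ $\Spin^c$ structures $\mathfrak{s}$. Since $\widehat{HFK}(L(p,q),K)$ splits over $\Spin^c(L(p,q))$ and, for each $\mathfrak{s}$, there is a spectral sequence from $\widehat{HFK}(L(p,q),K,\mathfrak{s})$ converging to $\widehat{HF}(L(p,q),\mathfrak{s})$, every summand $\widehat{HFK}(L(p,q),K,\mathfrak{s})$ is nonzero; combined with the rank hypothesis this forces each summand to be exactly $\mathbb{Z}_2$, supported in a single rational Alexander grading. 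So the hypothesis says precisely that $K$ has the thinnest possible knot Floer homology --- one generator per $\Spin^c$ structure --- which is the behaviour exhibited by grid number one knots, as observed above. The goal is to show that this property characterizes them.

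The next steps extract geometry from this ``Floer simplicity''. First, reinterpreting $\widehat{HFK}(L(p,q),K)$ as the sutured Floer homology of the knot exterior with two meridional sutures, one invokes the Thurston--norm \cite{GT0601618, GT0604360} and fiberedness \cite{GT0607156} detection theorems: the span of Alexander gradings supporting nonzero $\widehat{HFK}$ computes the rational genus of $K$, and the topmost such group having rank one forces $L(p,q) \setminus N(K)$ to fiber over $S^1$ with fiber a minimal--genus rational Seifert surface for $K$. This is the precise analogue, in the lens space setting, of the $S^3$ statement that $rk\,\widehat{HFK}(S^3,K) = 1$ implies $g(K) = 0$ (compare Theorem~\ref{thm:simpleHFK}). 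Second, one argues that Floer simplicity forbids essential surfaces in the exterior beyond the fiber: any such surface, via the surface--decomposition behaviour of sutured Floer homology and the Thurston norm, ought to inflate the rank of $\widehat{HFK}$ past $p$. Third, the exterior being fibered and atoroidal, one rules out a pseudo--Anosov monodromy --- equivalently a hyperbolic exterior --- which should again be a consequence of Floer simplicity; the exterior is then Seifert fibered, and a direct analysis of which Seifert fibered knot exteriors admit a Dehn filling equal to $L(p,q)$, together with the homology class of $K$, should identify $K$ with the unique grid number one knot in its class.

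The main obstacle is the passage from ``thin knot Floer homology'' to a \emph{global} statement about the exterior --- the absence of essential surfaces, and in particular the absence of a hyperbolic piece --- as opposed to the merely local information about the top Alexander grading that the detection theorems provide. The rational genus bound and fiberedness by themselves do not isolate the grid number one knots, since a given lens space may contain other fibered knots of the same rational fiber genus; one genuinely needs that \emph{every} $\Spin^c$--summand of $\widehat{HFK}$ has rank one, and the surface--decomposition formalism is considerably more delicate for rationally null--homologous knots, where the Alexander grading is $\mathbb{Q}$--valued and the relevant ``top grading'' must be located correctly in each $\Spin^c$ structure. Absent a clean ``no extra $\widehat{HFK}$ $\implies$ no essential surfaces'' principle, one is pushed toward a lens--space--by--lens--space analysis in the spirit of Theorem~\ref{thm:simpleHFK} --- for instance by comparing the correction terms of large surgeries on $K$ with those of $L(p,q)$ --- which is exactly the kind of computation that the grid diagram description developed in this paper is designed to make tractable.
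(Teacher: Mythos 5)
First, a point of order: the statement you are proving is Conjecture~\ref{conj:gn1} of the paper, and the paper offers no proof of it. The authors state it precisely because it is open; they note that even its specialization to $L(p,q)=S^3$ (Theorem~\ref{thm:S3}, unknot detection) is known only via symplectic-geometric arguments that ``have yet to be understood combinatorially,'' and the entire grid-diagram machinery of the paper is developed in the hope of eventually creating a combinatorial setting for attacking this conjecture. So there is no proof in the paper to compare yours against; I can only assess the proposal on its own terms, and, as you partly concede, it is a research program rather than a proof.

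Your opening reduction is correct and is exactly the observation the authors make: since $L(p,q)$ is an L-space and the spectral sequence from $\widehat{HFK}(L(p,q),K,\mathfrak{s})$ to $\widehat{HF}(L(p,q),\mathfrak{s})\cong\Z_2$ forces each $\Spin^c$-summand to be nonzero, the rank hypothesis forces each summand to be exactly $\Z_2$. After that the proposal has two gaps that are not merely technical. The first is the step ``Floer simplicity forbids essential surfaces beyond the fiber'': there is no surface-decomposition inequality that converts an essential annulus, torus, or closed surface in the exterior into extra rank of $\widehat{HFK}$, and you supply no argument for one; the known detection theorems see only the Thurston norm and the fiberedness of the extremal Alexander grading, which is exactly the ``local'' information you yourself identify as insufficient. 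The second gap is fatal to the proposed endgame: ruling out pseudo-Anosov monodromy and concluding that the exterior is Seifert fibered would contradict the conjecture's own conclusion, because the grid number one (simple) knots that $K$ is supposed to be are, in most homology classes, the surgery duals of hyperbolic Berge knots and therefore have hyperbolic exteriors. A correct proof must produce these knots, not exclude hyperbolic exteriors. (Relatedly, even the fiberedness step needs care: the local unknot in $L(p,q)$ satisfies the rank hypothesis and has grid number one, yet its exterior is reducible and not fibered, so the non-primitive homology classes must be disposed of separately before any fibering argument can start.) Your closing remark --- that one is pushed toward correction-term or $\Spin^c$-by-$\Spin^c$ comparisons in the spirit of Theorem~\ref{thm:simpleHFK} --- is a more honest description of where the difficulty actually lies, and is consistent with the authors' stated motivation for developing the combinatorial model.
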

\begin{conjecture}\label{conj:T} There are exactly two knots in $L(p,q)$ which satisfy  $$rk(\widehat{HFK}(L(p,q),K)) = p+2.$$
\end{conjecture}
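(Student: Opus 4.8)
The plan is to treat Conjecture~\ref{conj:T} as the lens-space analogue of the fact that the two trefoils are the only knots in $S^3$ with $rk\,\widehat{HFK}=3$: just as grid number one knots realize the minimum $rk\,\widehat{HFK}=p$ (the content of Conjecture~\ref{conj:gn1}), the knots of Conjecture~\ref{conj:T} should be the unique ``next simplest'' ones. I would first establish the ``at least two'' direction by exhibiting the candidates explicitly. By Theorem~\ref{thm:simpleHFK}, whenever $p=2g-1$ and surgery on a knot $K\subset S^3$ of genus $g$ produces $L(p,q)$, the surgery-dual knot $K'\subset L(p,q)$ satisfies $rk\,\widehat{HFK}(L(p,q),K')=p+2$; these surgery-dual knots (equivalently, the relevant Berge families lying on a genus-one fiber surface in $S^3$) are the natural candidates, and one uses the grid-diagram algorithm of Theorem~\ref{thm:main} to write down explicit diagrams, compute the rank directly, and verify that exactly two such knots occur for each $L(p,q)$ --- supplying a construction by hand in the remaining cases.

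The substance is the ``at most two'' direction. Suppose $rk\,\widehat{HFK}(L(p,q),K)=p+2$. Since each $\mathrm{spin}^c$ summand $\widehat{HFK}(L(p,q),K,\mathfrak{s})$ carries the spectral sequence converging to $\widehat{HF}(L(p,q),\mathfrak{s})\cong\mathbb{Z}_2$, every summand has odd rank at least one, so exactly one summand $\widehat{HFK}(L(p,q),K,\mathfrak{s}_0)$ has rank $3$ and all others have rank $1$. The conjugation symmetry of knot Floer homology then forces $\mathfrak{s}_0$ to be self-conjugate (else $\bar{\mathfrak{s}}_0$ would also have rank $3$), and, since $\chi\bigl(\widehat{HFK}(L(p,q),K,\mathfrak{s}_0)\bigr)=\pm 1$, forces the rank-$3$ summand to be supported in three rational Alexander gradings $\{a,0,-a\}$ with $a>0$, each of rank one: exactly the ``trefoil pattern.'' Since the top Alexander grading has rank one, the fiberedness detection theorem for sutured/link Floer homology (\cite{GT0607156}, in the rationally null-homologous setting) shows $L(p,q)\setminus K$ fibers over $S^1$, with fiber a rational Seifert surface whose Euler characteristic is controlled by $a$; combined with the rank-$3$ bound this should pin the fiber to minimal complexity. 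One then classifies the finitely many resulting fibered complements --- once-punctured torus bundles, suitably interpreted --- together with their lens-space Dehn fillings, and checks that only the two candidates above arise.

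I expect the main obstacle to be precisely this last step, which is a structural classification rather than a formal manipulation. Even over $S^3$, ``$rk\,\widehat{HFK}=3\Rightarrow$ trefoil'' needs real input beyond fiberedness: one must exclude genus-$\ge 2$ fibered knots with a gap in the next-to-top Alexander grading, using the classification of genus-one fibered knots together with finer properties of the full knot complex, and the figure-eight knot (rank $5$) must be eliminated among genus-one fibered knots. In the lens-space setting this is harder, since $K$ is only rationally null-homologous of order $p$, so ``genus,'' the fiber $F$, and the bound on $-\chi(F)$ must all be routed through rational Seifert surfaces, and the enumeration of small-complexity fibered three-manifolds with torus boundary admitting a lens-space filling is itself delicate. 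A purely combinatorial attack via Theorem~\ref{thm:main} --- bounding the grid number in terms of $rk\,\widehat{HFK}$ and then enumerating low grid number knots --- looks less promising, because grid number is not controlled by Floer homology rank. Accordingly, I regard Conjecture~\ref{conj:T}, like Conjecture~\ref{conj:gn1}, as genuinely open: a complete proof likely requires a classification of knots in lens spaces with ``nearly simple'' knot Floer homology, for which the combinatorial model developed here is intended as a point of departure.
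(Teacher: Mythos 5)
This statement is one of the paper's conjectures, not a theorem: the paper offers no proof of it, and your closing assessment --- that Conjecture~\ref{conj:T} is genuinely open --- is exactly the paper's own position. So there is nothing in the paper to compare your argument against step by step; what the paper does supply is only the \emph{existence} half, by citation: Section~4 of \cite{Hedden} exhibits two knots $T_1, T_2 \subset L(p,q)$ with $rk\,\widehat{HFK}(L(p,q),T_i) = p+2$ for every lens space, and shows surgery on them cannot yield $S^3$. Your proposed route to existence via Theorem~\ref{thm:simpleHFK} is slightly off as stated: that theorem produces rank $p+2$ only for surgery duals of genus-$g$ knots in $S^3$ with $p = 2g-1$, and not every $L(p,q)$ arises from such a surgery, so the ``remaining cases'' you defer to a by-hand grid computation are in fact most cases; the paper's citation to \cite{Hedden} sidesteps this entirely.

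On the uniqueness half, your reduction is sound as far as it goes: the spectral sequence to $\widehat{HF}(L(p,q),\mathfrak{s}) \cong \Z_2$ does force each $\Spin^c$ summand to have odd rank at least one, hence a single rank-$3$ summand, and the conjugation symmetry does force that summand into a self-conjugate $\Spin^c$ structure (with the caveat that for $p$ even there are two such). But two of your subsequent inputs are not available off the shelf. First, the fiberedness detection theorem \cite{GT0607156} you invoke is proved for null-homologous knots; the knots relevant here generate $H_1(L(p,q))$, and extending fiberedness detection to rationally null-homologous knots via rational Seifert surfaces is itself a substantial piece of work, not a formality. Second, even granting fiberedness, the analogue of ``$rk\,\widehat{HFK}(S^3,K)=3 \Rightarrow$ trefoil'' (Theorem~\ref{thm:Ghiggini}) rests on Ghiggini's genus-one hypothesis plus contact-geometric input, and the paper explicitly notes that even the $S^3$ cases are not understood combinatorially. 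So your sketch is a reasonable research program consistent with the paper's framing, but it should not be mistaken for a proof, and your own final paragraph correctly says as much.
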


In Section $4$ of \cite{Hedden}, two knots $T_1,T_2\subset L(p,q)$ satisfying $rk(\widehat{HFK}(L(p,q),T_i)) = p+2$ are specified for each lens space.  There, it is shown that surgery on $T_i$ cannot produce $S^3$. Thus a proof of the above conjectures, together with Theorem~\ref{thm:simpleHFK}, would indeed prove the Berge conjecture.

Though our Conjectures are quite strong, we note that in the case $L(p,q)=S^3$, we have affirmative answers to the first and a specialization of the second. 

\begin{theorem}\label{thm:S3} \cite{MR2023281}
Suppose $K \subset S^3$ satisfies $rk(\widehat{HFK}(S^3,K)) = 1$. Then $K$ is the unknot (the only grid number one knot in $S^3$). 
\end{theorem}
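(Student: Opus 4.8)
\emph{Proof sketch.} The plan is to deduce this from two imported facts: the conjugation symmetry of knot Floer homology, and the theorem of \ons\ that knot Floer homology detects the Seifert genus. Neither uses the combinatorial machinery developed in this paper.

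First I would record the symmetry $\widehat{HFK}(S^3,K,i)\cong\widehat{HFK}(S^3,K,-i)$ for all $i\in\Z$, where $i$ denotes the Alexander grading. If $rk(\widehat{HFK}(S^3,K))=1$, then there is exactly one Alexander grading $i_0$ with $\widehat{HFK}(S^3,K,i_0)\neq 0$, and the symmetry forces $i_0=-i_0$, hence $i_0=0$. Passing to graded Euler characteristics at this stage already shows $\Delta_K(t)=1$; but since there exist nontrivial knots with trivial Alexander polynomial, this observation alone is not sufficient, which is precisely why the genus detection input is essential.

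Next I would invoke the identity $g(K)=\max\{\,i : \widehat{HFK}(S^3,K,i)\neq 0\,\}$. Combined with the previous paragraph this gives $g(K)=0$, so $K$ bounds an embedded disk and is therefore the unknot. The parenthetical assertion in the statement — that the unknot is the unique grid number one knot in $S^3$ — is an elementary check, which I would carry out directly from the grid formalism of Section~\ref{section:GridDiagrams} specialized to $L(1,q)\cong S^3$.

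In this argument everything but the genus detection theorem is formal or classical: the symmetry is intrinsic to the definition of $\widehat{HFK}$, the reduction to $i_0=0$ is a one line pigeonhole count, and ``genus zero implies unknotted'' is immediate. The genus detection theorem is the single deep ingredient, and hence the main obstacle; since its proof genuinely relies on the holomorphic theory, I would simply take it as given, citing \cite{MR2023281}.
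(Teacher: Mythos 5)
The paper does not prove this theorem at all---it simply imports it by citing \cite{MR2023281}, which is precisely the genus-bounds paper, and even remarks that the proof ``relies on connections between Heegaard Floer homology and symplectic geometry.'' Your derivation (conjugation symmetry forces the unique nonzero Alexander grading to be $0$, genus detection then gives $g(K)=0$, hence $K$ is unknotted) is correct and is exactly the standard deduction that the citation encodes, so it matches the paper's intended treatment.
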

\begin{theorem}\label{thm:Ghiggini}\cite{Ghiggini}
Suppose $K \subset S^3$ satisfies  $rk(\widehat{HFK}(S^3,K)) = 3$ and $g(K) = 1$.  Then $K$ is the right- or left-handed trefoil.
\end{theorem}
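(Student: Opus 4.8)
The plan is to first read off the shape of $\widehat{HFK}(S^3,K)$ from the two hypotheses, then to upgrade the resulting rank-one statement in the top Alexander grading to the assertion that $K$ is fibered, and finally to invoke the classification of genus-one fibered knots in $S^3$. For the first step, recall that knot Floer homology satisfies the symmetry $\widehat{HFK}_d(S^3,K,s)\cong\widehat{HFK}_{d-2s}(S^3,K,-s)$, so $\mathrm{rk}\,\widehat{HFK}(S^3,K,s)=\mathrm{rk}\,\widehat{HFK}(S^3,K,-s)$ and $\chi\,\widehat{HFK}(S^3,K,s)=\chi\,\widehat{HFK}(S^3,K,-s)$ for all $s$. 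By the \os detection of the Seifert genus, $g(K)=1$ forces $\widehat{HFK}(S^3,K,s)=0$ for $|s|>1$ while $\widehat{HFK}(S^3,K,1)\neq0$. Writing $r_s$ for the rank in Alexander grading $s$, we have $2r_1+r_0=3$ with $r_1\geq1$, hence $r_1=r_0=1$. Each nonzero group then has rank one, so its Euler characteristic is $\pm1$; since the graded Euler characteristic of $\widehat{HFK}$ is the symmetrized Alexander polynomial, the symmetry above together with $\Delta_K(1)=\pm1$ forces $\Delta_K(t)=\pm(t-1+t^{-1})$, and in particular $\widehat{HFK}(S^3,K,1)\cong\Z$, supported in a single Maslov grading.

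The heart of the argument, and the step I expect to be the main obstacle, is to conclude from $\widehat{HFK}(S^3,K,g(K))\cong\Z$ that $K$ is fibered; for $g(K)=1$ this is exactly the theorem of \cite{Ghiggini}. The idea is to reinterpret the top knot Floer group --- roughly, as sutured Floer homology of the complement of $K$ cut open along a genus-one Seifert surface $F$, or equivalently, via the surgery formula, in terms of the Heegaard Floer homology of the $0$-surgery in its torsion $\mathrm{Spin}^c$ structure --- and then to show that a rank-one answer forces the associated sutured manifold to be a product, i.e.\ forces $F$ to be a fiber. This is where the genuinely hard input enters: Gabai's sutured manifold theory, control of (sutured) Floer homology under decomposition along a norm-minimizing surface, and, in the genus-one case, contact-geometric arguments. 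Granting this, $K$ is a genus-one fibered knot.

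It then remains to identify the genus-one fibered knots in $S^3$. The fiber is a once-punctured torus, so the monodromy $\phi$ lies in its mapping class group, identified via the action on $H_1$ with $SL(2,\Z)$, and the mapping torus --- hence $K$ itself --- is determined by the conjugacy class of $\phi$. The requirement that this mapping torus be filled up to $S^3$ restricts $\phi$, up to conjugacy, to one of exactly three classes: two periodic ones of order six, giving the right- and left-handed trefoils (their mapping tori being Seifert fibered, hence torus-knot complements), and one Anosov class, giving the figure-eight knot; this is classical. The figure-eight is then excluded, since $\Delta_{4_1}(t)=-t+3-t^{-1}\neq\pm(t-1+t^{-1})$ --- equivalently $\mathrm{rk}\,\widehat{HFK}(S^3,4_1)\geq 1+3+1=5$ --- contradicting the first step. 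Therefore $K$ is one of the two genus-one torus knots, namely the right- or left-handed trefoil.
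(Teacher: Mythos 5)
The paper states this result purely as a citation of \cite{Ghiggini} and gives no proof of its own, so there is nothing internal to compare against; your outline correctly reconstructs the standard derivation. The reduction to $r_1=r_0=r_{-1}=1$ via the symmetry of $\widehat{HFK}$ and genus detection, the appeal to fiberedness detection in the top Alexander grading, and the classical classification of genus-one fibered knots in $S^3$ (the two trefoils and the figure-eight, the latter excluded since its total rank is $5$) are all sound; the one step you assert rather than prove --- that $\widehat{HFK}(S^3,K,1)\cong\Z$ forces $K$ to be fibered --- is precisely the theorem of \cite{Ghiggini} being cited here, and you correctly identify it as the essential and genuinely hard input.
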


The proofs of the above theorems rely on connections between Heegaard Floer homology and symplectic geometry and have  yet to be understood combinatorially.
Such an understanding of Theorems~\ref{thm:S3} and~\ref{thm:Ghiggini} would likely lead to a proof of our conjectures and, hence, of the Berge conjecture.

\begin{remark}  We find it worthwhile to remark that while \cite{GT0607777} provides an algorithm for  computing  $\widehat{CF}(L(p,q),K)$,  implementation varies on a knot-by-knot basis and is time-consuming in practice.  The present work has the advantage that the chain complexes  are explicit, combinatorial in description, and uniformly implemented.  Another key feature  is that  a combinatorial description of the filtered chain homotopy type of $CF^-(L(p,q),K)$ is provided.  This invariant contains strictly more information than the filtered chain homotopy type of $\widehat{CF}(L(p,q),K)$ and, in particular,  is required for formulas which compute the Floer homology of closed three-manifolds obtained by Dehn surgery along $K\subset L(p,q)$ \cite{GT0504404}.
\end{remark}

\subsection{Outline}

In the next section we define grid diagrams for knots and links in lens spaces.  To a grid diagram for a knot $K$, $G_K$, we associate a module $C^-(G_K)$, equipped with an endomorphism $\partial^-$, both of which are defined in terms of the combinatorics of these diagrams.   We also associate to elements in $C^-(G_K)$ three combinatorial quantities $(\Sp,\M,\A)$.  Theorem~\ref{thm:main} states that the object we define is a chain complex for the knot Floer homology, and that $(\Sp,\M,\A)$ coincide with the $\Spin^c$, Maslov (homological), and Alexander gradings on knot Floer homology, respectively.  The proof will be divided into a series of propositions:  
	\begin{itemize}
		\item Proposition~\ref{prop:Boundary} identifies the ungraded combinatorial object $(C^-(G_K),\partial^-)$ with a chain complex for the  knot Floer homology.
		\item	Proposition~\ref{prop:SpincGrad} equates the combinatorial quantity $\Sp\in \Z_p$ with the $\Spin^c$ grading $\Sf$ on Floer homology.
		\item  Proposition~\ref{prop:MaslovGrad} equates the combinatorial quantity $\M\in \Q$ with the Maslov grading $\Mf$ on Floer homology.
		\item Proposition~\ref{prop:AlexGrad} equates the combinatorial quantity $\A\in \Q$ with the Alexander grading $\Af$ on knot Floer homology.	
	\end{itemize}
 
That the knot Floer homology can be computed from grid diagrams will be more or less straightforward, and Proposition~\ref{prop:Boundary} will follow in the spirit of the analogous theorem for knots in $S^3$ \cite{GT0607691}.  The bulk of the work will be in showing that the three combinatorial gradings agree with the $\Spin^c$,  Alexander, and Maslov gradings, originally defined using vector fields and index theory.

	In Section $3$, we recall necessary background on Heegaard Floer theory, paving the way for a proof of Theorem~\ref{thm:main}  in Section $4$. In particular, Section $4$ contains proofs of the above propositions.  Additionally, we prove there (see Proposition~\ref{prop:existence} and its corollary) that every knot in a lens space possesses a grid diagram.   An important technical tool in the proofs will be a correspondence between grid diagrams for knots in lens spaces and grid diagrams for certain links in $S^3$ - the universal cover of $L(p,q)$ (this correspondence was developed by the second author and her collaborators in \cite{GT0701460}).  Indeed, under the covering projection $\pi:S^3\rightarrow L(p,q)$, a grid diagram for $(L(p,q),K)$ lifts to a grid diagram for a knot $\tilde{K}\subset S^3$, and this lifted diagram can be used together with results of \cite{GT0610559} and \cite{GT0608001} to help establish our grading formulas.   

\subsection{Acknowledgments} The first author was partially supported by NSF Grant DMS--0239600.  The second author was partially supported by an NSF postdoctoral fellowship. The third author was partially supported by NSF Grant DMS-0706979.
   
\section{Combinatorial Description of $(CF^-(L(p,q),K), \bdry^-)$}\label{section:CombSummary}
In this section we provide a purely combinatorial description of the Heegaard Floer invariants of knots in lens spaces, making no mention of $J$--holomorphic curves.   We postpone the proof that our chain complex is isomorphic to the one defined by \ons in \cite{GT0504404} (see also \cite{MR2065507} and \cite{GT0306378})  until Section~\ref{section:GridDiagrams}, after having reviewed the relevant aspects  of Heegaard Floer theory in Section~\ref{section:HFBackground}.

Throughout, we assume $p,q \in \mathbb{Z}$ are relatively prime, with $p \in \mathbb{Z}_+, q \neq 0$, and $-p < q < p$.  $L(p,q)$ denotes $-\frac{p}{q}$ surgery on the unknot in $S^3$.  The notation ``$a \mod n$'', for $n \in \Z_+$ and $a \in \R$, refers to the unique element of the set $\{a + kn| k \in \Z\}$ in the range $[0,n)$.

Isotopy classes of knots and links in $L(p,q)$ are encoded in the combinatorics of {(twisted toroidal) grid diagrams}, which we now define: 

\begin{definition} \label{def:TTGridDiag}  A {\em (twisted toroidal) grid diagram} $G_K$ with grid number $n$ for $L(p,q)$ consists of  a five-tuple $(T^2,\vec{\alpha},\vec{\beta},\vec{\mathbb{O}},\vec{\mathbb{X}})$ (illustrated in Figure~\ref{fig:twistedgrid}), where: 
  \begin{itemize}
	  \item $T^2$ is the standard oriented torus $\R^2 / \Z^2$, identified with the quotient of $\mathbb{R}^2$ (with its standard orientation) by the $\mathbb{Z}^2$ lattice generated by the vectors $(1,0)$ and $(0,1)$. \vspace{2mm}
    \item $\vec{\alpha} = \{\alpha_0, \ldots, \alpha_{n-1}\}$ are the $n$ images $\alpha_i$ in $T^2 = \mathbb{R}^2/\mathbb{Z}^2$ of the lines $y = \frac{i}{n}$ for $i \in \{0, \ldots n-1\}$.  Their complement $T^2-\alpha_0 - \ldots - \alpha_{n-1}$ has $n$ connected annular components, which we call the {\em rows} of the grid diagram.  \vspace{2mm}
    \item $\vec{\beta} = \{\beta_0, \ldots, \beta_{n-1}\}$ are the $n$ images $\beta_i$ in $T^2 = \mathbb{R}^2/\mathbb{Z}^2$ of the lines $y = -\frac{p}{q}(x-\frac{i}{pn})$ for $i \in \{0, \ldots n-1\}$.  Their complement $T^2 - \beta_0 - \ldots - \beta_{n-1}$ has $n$ connected annular components, which we call the {\em columns} of the grid diagram.  \vspace{2mm} 
    \item $\vec{\mathbb{O}} = \{O_0, \ldots, O_{n-1}\}$ are $n$ points in $T^2 - \vec{\alpha} - \vec{\beta}$ with the property that no two $O$'s lie in the same row or column.  \vspace{2mm}
    \item $\vec{\mathbb{X}} = \{X_0, \ldots, X_{n-1}\}$ are $n$ points in $T^2 - \vec{\alpha} - \vec{\beta}$ with the property that no two $X$'s lie in the same row or column. \vspace{2mm}
  \end{itemize}
\end{definition}

\begin{figure}
\begin{center}
\input{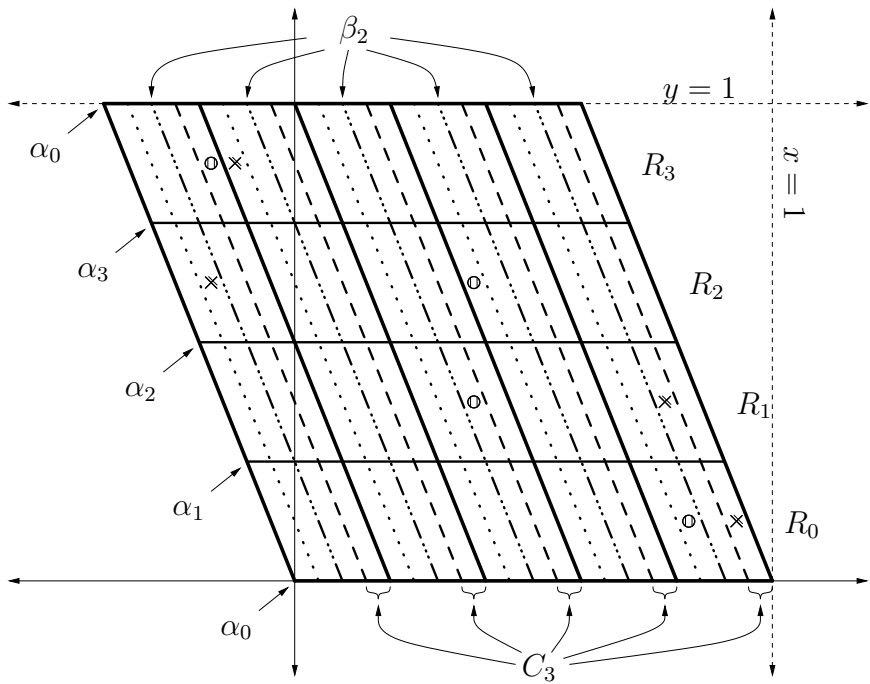}
\end{center}
\caption{The {\it preferred fundamental domain} on $\mathbb{R}^2$ describing a twisted toroidal grid diagram $G_K$ with grid number $n=4$ for a link $L$ in $L(5,2)$.  Here, $C_3$ is one of the four columns, while $R_i$ are the rows. Throughout the paper, we will use this fundamental domain for $G_K$, which consists of $(x,y) \in \R^2$ satisfying $0 \leq y < 1$, $-\frac{q}{p}y \leq x < -\frac{q}{p}y + 1$.}
\label{fig:twistedgrid}
\end{figure}

The lens space $L(p,q)$ is comprised of two solid tori, $V_\alpha$ and $V_\beta$, with common boundary $T^2$.  We view $V_\alpha$ as below $T^2$ and $V_\beta$ as above $T^2$.  The curves in $\vec{\alpha}$ are meridians of $V_\alpha$ and the curves in $\vec{\beta}$ are meridians of $V_\beta$.  
A grid diagram $G_K$ uniquely specifies an oriented knot or link $K$ in $L(p,q)$ as follows:

\begin{enumerate}
\item First connect each $X_i$ to the unique $O_j$ lying in the same row as $X_i$ by an oriented ``horizontal'' arc embedded in that row of $T^2$, disjoint from the $\vec{\alpha}$ curves.
\item Next connect each $O_{j}$ to the unique $X_m$ lying in the same column as $O_j$ by an oriented ``slanted'' arc embedded in that column of $T^2$, disjoint from the $\vec{\beta}$ curves.\footnote{If an $O_i$ and an $X_j$ coincide, then we offset one basepoint slightly from the other and join them by two small arcs to form a trivial unknotted component.} 
\item The union of these two collections of $n$ arcs forms an immersed (multi)curve $\gamma$ in $T^2$.  Remove all self-intersections of $\gamma$ by pushing the interiors of the horizontal arcs slightly down into $V_\alpha$ and the interiors of the slanted arcs slightly up into $V_\beta$.
\end{enumerate}
		The above construction associates a unique isotopy class of oriented knot or link to a grid diagram.  Different choices of horizontal arcs in a row are equated by an isotopy rel--boundary within $V_\alpha$ that is disjoint from a set meridional disks bounded by the $\vec{\alpha}$ curves; similarly for the slanted arcs.  Furthermore, it is straightforward to see that any isotopy class of oriented links in $L(p,q)$ can be realized by a grid diagram (see Proposition~\ref{prop:existence} and its corollary below).

Pick, then, a grid diagram $G_K =(T^2, \vec{\alpha}, \vec{\beta}, {\OO}, {\XX})$ for an oriented knot $K\subset L(p,q)$.   We construct a  filtered, graded chain complex $(C^-(G_K), \bdry^-)$ associated to the grid diagram $G_K$ for this knot.\footnote{ For the ease of exposition, we restrict ourselves to the situation where $K$ is a knot and not a link.}

   \subsection{The Chain Complex} \label{subsec:ChainCx} We first describe the chain complex $(C^-(G_K), \bdry^-)$, defining gradings of its elements in the next subsection. 
   
   $C^-(G_K)$ is generated as a free module over the polynomial ring $\mathbb{Z}_2[U_0,\ldots, U_{n-1}]$ by a set $\mathcal{G}$ determined by $G_K$.  Elements of $\mathcal{G}$ consist of (unordered) $n$--tuples of intersection points in $\vec{\alpha} \cap \vec{\beta}$ which correspond to bijections between $\vec{\alpha}$ and $\vec{\beta}$.  We refer to the $n$ points comprising a generator ${\bf x} \in \mathcal{G}$ as the {\em components} of ${\bf x}$, and the unique component of $\x$ in $\alpha_i$ as the {\em $\alpha_i$--component}, denoted $x_i$.  By picking a fundamental domain for $G_K$ as in Figure~\ref{fig:twistedgrid} and ordering the  ${\alpha}$ curves in increasing order from bottom to top and the ${\beta}$ curves from left to right, we can identify generators $\x\in\mathcal{G}$ with elements in $S_n \times \mathbb{Z}_p^n$, where $S_n$ is the symmetric group on $n$ letters\footnote{Throughout this paper, we will represent an element $\sigma$ of $S_n$ by the ordered tuple $[\sigma(0) \,\, \dots \,\, \sigma(n-1)]$ of images of the elements $\{0, \ldots, n-1\}$.   See Figure~\ref{fig:Generator} for an example.}.  Indeed, to an element $\{\sigma,(a_0,\ldots, a_{n-1})\}\in S_n \times \mathbb{Z}_p^n$, we associate the unique ${\bf x} \in \mathcal{G}$ which satisfies:
   \begin{enumerate}
	   \item The $\alpha_i$ component of $\x$ lies in $\alpha_i\cap \beta_{\sigma(i)}$.
	   \item The $\alpha_i$ component of $\x$ is the $a_i$--th intersection between $\alpha_i\cap \beta_{\sigma(i)}$.  Here, the $p$ distinct intersection points of $\alpha_i\cap \beta_{\sigma(i)}$ are numbered in increasing order from $0$ to $p-1$ as $\alpha_i$ is traversed from left to right in this fundamental domain. 
   \end{enumerate}
It is clear that this correspondence is a bijection.

\begin{figure}
\begin{center}
\input{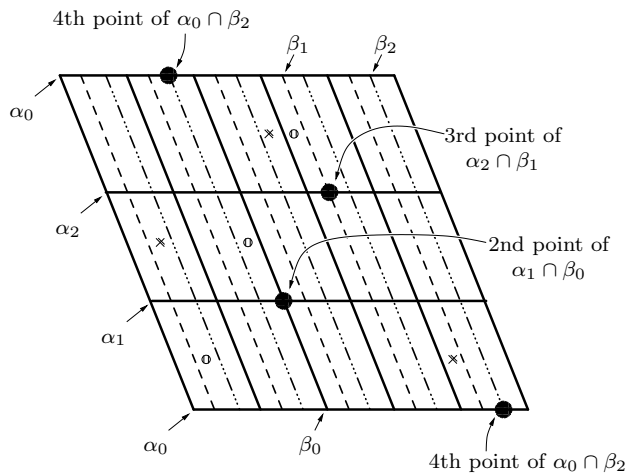}
\end{center}
\caption{A grid number $n=3$ diagram for a knot in the lens space L(5,2).  The black dots represent the generator specified by $\{[2\,\,0\,\,1], (4,2,3)\}$  in $S_3 \times (\mathbb{Z}_5)^3$.  The intersection points between a fixed $\alpha$ and $\beta$ curve are labeled $0, \ldots p-1$ when read from left to right on the preferred fundamental domain.}
\label{fig:Generator}
\end{figure}

The boundary operator $\partial^-$ counts certain embedded parallelograms in $G_K$ which connect generators in $\mathcal{G}$. To describe it, let us call a  properly embedded quadrilateral in  $G_K$ a {\em parallelogram}.  Here, proper means that that alternating edges of the quadrilateral are identified with alternating subintervals of the $\vec{\alpha}$ and $\vec{\beta}$ curves, and vertices of the quadrilateral are identified with intersections $\alpha_i\cap \beta_j$.  See Figure~\ref{figure:ParGram}.  

We say that a parallelogram $\Par$ {\em connects $\x\in \mathcal{G}$ to $\y\in \mathcal{G}$}, if $\x$ and $\y$ agree for all but two components, $\{x_i,x_j\}$ and $\{y_i,y_j\}$, and the corners of $\Par$ are $\{x_i,y_i,x_j,y_j\}$, arranged so that the arcs on $\partial \Par$ along $\alpha_i$ (resp. $\alpha_j$) are oriented from $x_i$ to $y_i$ (resp. $x_j$ to $y_j$).  Here, $\partial P$ is oriented counter-clockwise with respect to the center of $\Par$. See Figure~\ref{figure:ParGram}.  

We call a parallelogram connecting $\x$ to $\y$ {\em admissible}  if it contains no components of $\x$ or $\y$ in its interior.  (The parallelogram shown in Figure~\ref{figure:ParGram} is admissible.)  For each $\x, \y \in \mathcal{G}$, form the set  
$$\PG({\bf x},{\bf y}) =  \{\Par | \ \Par \text{\ is an admissible\ parallelogram\ connecting\ } \x \text{\ to\ } \y \ \}.  $$
                      
The boundary operator is defined on generators ${\bf x}\in \mathcal{G}$ by 
$$\partial^-({\bf x}) = \sum_{{\bf y} \in \mathcal{G}}\sum_{\begin{subarray}{c}
                                                       \Par \in \PG({\bf x},{\bf y})\\
                                                       \end{subarray}} U_0^{n_{O_0}(\Par)}\cdots U_{n-1}^{n_{O_{n-1}}(\Par)} {\bf y}$$
where  $n_{{O_i}}(\Par)$ denotes the number of times $O_i$ appears in the interior of $\Par$.  We extend this to an operator on all of $C^-(G_K)$ by requiring linearity over addition, and equivariance with respect to each polynomial variable $U_i$.

\begin{proposition}\label{prop:Boundary}
$(C^-(G_K),\partial^-)$ is isomorphic to a chain complex which computes the knot Floer homology $(CF^-(L(p,q),K),\partial^-)$.  
\end{proposition}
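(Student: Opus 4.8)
The plan is to realize $(C^-(G_K),\bdry^-)$ as the chain complex associated to a \emph{nice} multi-pointed Heegaard diagram for $(L(p,q),K)$ --- in the sense of \cite{GT0607777} --- and then to evaluate the holomorphic disk count appearing in the Heegaard--Floer differential combinatorially, via Lipshitz's index formula \cite{Lipshitz}, exactly as was done for knots in $S^3$ in \cite{GT0607691}. The argument runs parallel to that of \cite{GT0607691}; I only need to point out the two places where the lens space and the $p$-fold intersection pattern $|\alpha_i\cap\beta_j|=p$ enter, neither of which disturbs the essential mechanism.

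First I would check that $\mathcal H=(T^2,\vec\alpha,\vec\beta,\OO,\XX)$ is a doubly multi-pointed Heegaard diagram compatible with $(L(p,q),K)$, the $\OO$'s playing the role of the ``$w$'' basepoints and the $\XX$'s that of the ``$z$'' basepoints. That $(T^2,\vec\alpha,\vec\beta,\OO)$ presents $L(p,q)$ is because its underlying curve configuration is obtained from the standard genus-one diagram of $L(p,q)$ (a single meridian of each of $V_\alpha,V_\beta$) by repeatedly adjoining a parallel copy of $\alpha_0$ and of $\beta_0$ along with a new $\OO$ basepoint; this is a stabilization, hence does not change the represented three-manifold. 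That the oriented knot produced by the arc-connecting recipe of Section~\ref{section:CombSummary} is the knot determined by $\mathcal H$ in the sense of \ons \cite{GT0504404} is immediate from the construction, since the horizontal arcs are supported in $T^2-\vec\alpha$ and pushed into $V_\alpha$ while the slanted arcs are supported in $T^2-\vec\beta$ and pushed into $V_\beta$ --- precisely the Heegaard-theoretic prescription. (All of this is carried out in Section~\ref{section:GridDiagrams}; see Proposition~\ref{prop:existence} and its corollary, which also show every knot in $L(p,q)$ is realized this way.) It follows that $\mathcal G=\mathbb T_\alpha\cap\mathbb T_\beta\subset\Sym^n(T^2)$ and that sending $U_i\mapsto O_i$ identifies the $\Z_2[U_0,\dots,U_{n-1}]$-module $C^-(G_K)$ with the module underlying $CF^-(L(p,q),K)$ as computed from $\mathcal H$.

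Next I would note that $\mathcal H$ is nice and admissible: the $\vec\alpha$ are $n$ parallel horizontal circles and the $\vec\beta$ are $n$ parallel circles of a single slope, so $T^2-\vec\alpha-\vec\beta$ is a disjoint union of $pn^2$ open parallelograms, and in particular every region containing no basepoint is a square; admissibility --- that every nonzero periodic domain has both positive and negative multiplicities --- follows from a winding-number/area argument as in \cite{GT0607691} (after a small isotopy if needed), and makes the sums defining $\bdry^-$ finite. Now, for a nice diagram, Lipshitz's formula \cite{Lipshitz} together with the analysis of \cite{GT0607777,GT0607691} shows that whenever $\phi\in\pi_2(\x,\y)$ has Maslov index $1$, nonnegative domain, and $\x\ne\y$, the moduli space $\widehat{\mathcal M}(\phi)/\R$ is empty unless the domain of $\phi$ is an embedded bigon or an embedded rectangle, in which case it consists of a single point modulo $2$. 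The grid $G_K$ admits no embedded bigons: lifted to the preferred fundamental domain, $\alpha_i$ and $\beta_j$ are pieces of two straight lines, which meet in at most one point, so no embedded sub-disk of $T^2$ can be bounded by one $\alpha$-arc and one $\beta$-arc. An embedded rectangle connecting $\x$ to $\y$ is necessarily bounded by arcs of two distinct $\alpha$-curves and two distinct $\beta$-curves, since $\x$ and $\y$ are bijections --- the condition $|\alpha_i\cap\beta_j|=p$ only enlarges the supply of such rectangles compared to the $S^3$ case --- and those whose interiors miss all components of $\x$ and $\y$ are exactly the members of $\PG(\x,\y)$, each contributing $U_0^{n_{O_0}(\Par)}\cdots U_{n-1}^{n_{O_{n-1}}(\Par)}\,\y$. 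Hence $\bdry^-$ agrees with the Heegaard--Floer differential on $CF^-(L(p,q),K)$ determined by $\mathcal H$; in particular $(\bdry^-)^2=0$ is inherited, and we conclude $(C^-(G_K),\bdry^-)\cong(CF^-(L(p,q),K),\bdry^-)$.

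The genuinely nontrivial ingredient here is the first step: that ``grid position'' in a lens space faithfully encodes knots and that the resulting combinatorial object is the claimed Heegaard diagram for $(L(p,q),K)$. This is exactly what the constructions of Section~\ref{section:GridDiagrams} supply, and once it is in hand the disk count is a verbatim application of the machinery of \cite{GT0607777,Lipshitz,GT0607691}, which is local and insensitive to the ambient manifold. The real labor of the paper therefore lies in the grading identifications --- Propositions~\ref{prop:SpincGrad}, \ref{prop:MaslovGrad} and~\ref{prop:AlexGrad} --- for which this ``nice diagram'' technology gives no leverage.
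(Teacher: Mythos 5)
Your proposal is correct and follows essentially the same route as the paper: one observes that $G_K$ is an admissible, nice, $2n$--pointed Heegaard diagram compatible with $(L(p,q),K)$ (admissibility as in \cite{GT0607691}, niceness because every region of $T^2-\vec{\alpha}-\vec{\beta}$ is a parallelogram), and then invokes the Sarkar--Wang/Lipshitz analysis to conclude that the Maslov index one classes contributing to $\partial^-$ are exactly the empty embedded parallelograms of $\PG(\x,\y)$. The extra details you supply (ruling out bigons, the stabilization argument identifying the underlying three-manifold) are consistent with, and slightly more explicit than, the paper's own treatment.
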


Note that the identification implies that $(\partial^-)^2=0$, and a host of other properties satisfied by knot Floer homology chain complexes.  In particular, it implies that  $(C^-(G_K),\partial^-)$ is equipped with three gradings, the $\Spin^c$, Maslov, and Alexander gradings, denoted $\Sf$, $\Mf$, and $\Af$.  Moreover, it implies that $(C^-(G_K),\partial^-)$ is filtered with respect to $\Af$,  and that the filtered chain homotopy type of $(C^-(G_K),\partial^-)$ with respect to $\Af$ is an invariant of $K\subset L(p,q)$. We now discuss how to recover $\Sf$, $\Mf$, and $\Af$ from the combinatorics of $G_K$.

\begin{figure}
\begin{center}
\input{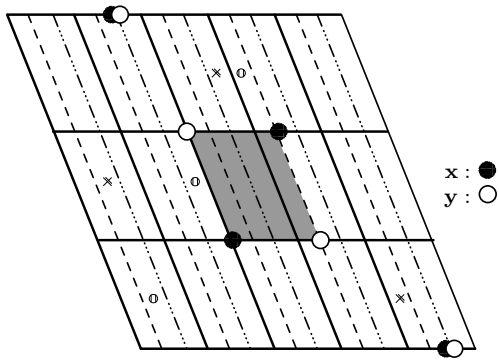}
\end{center}
\caption{Two generators in a grid number $3$ diagram for $L(5,2)$ connected by an admissible parallelogram connecting ${\bf x}$ and ${\bf y}$ with ${\bf x}$ at the NE-SW corners.}
\label{figure:ParGram}
\end{figure}

\subsection{Gradings} 
For a knot $K\subset L(p,q)$, any knot Floer homology chain complex $CF^-(L(p,q),K)$ comes equipped with three gradings:  the $\Spin^c$, Maslov, and rational Alexander gradings.  In light of Proposition~\ref{prop:Boundary}, we can hope to understand these gradings in terms of the combinatorics of $G_K$.   Indeed, we define three combinatorial quantities $$(\Sp,\M,\A)\in (\Z_p,\Q,\Q)$$ which will be identified with the aforementioned gradings on knot Floer homology.

\subsubsection{$\Sp$ grading} \label{subsubsec:SpinCGrad}
To describe the first grading, let ${\bf x}_{\OO}\in \mathcal{G}$ be the generator whose components  consist of the lower left corners of the $n$ distinct parallelogram regions in $T^2 - \vec{\alpha}-\vec{\beta}$ which contain the $\OO$ basepoints. Express ${\bf x}_{\OO}$ as an element of $S_n \times \Z_p^n$,  so that ${\bf x}_{\OO}= (\sigma_{\OO}, (a_0, \ldots, a_{n-1}))$.  Now let ${\bf x}=(\sigma,(b_0,\ldots, b_{n-1})) \in \mathcal{G}$ be any generator. Define

\[\Sp({\bf x}) = \left[ (q-1)+\left(\sum_{i=0}^{n-1} b_i - \sum_{i=0}^{n-1} a_i \right)\right] \mod p.\]

\noindent Now extend $\Sp$ to a grading on homogeneous elements in $C^-(G_K)$ by the rule $$\Sp(U_i{\bf x}) = \Sp({\bf x}) \ \ \ \ \mathrm{for \ each \ } i \in 0, \ldots, n-1.$$

The following proposition indicates that this grading corresponds to the $\Spin^c$ grading on knot Floer homology.  Before stating it, let us recall that knot Floer homology is equipped with a map $$\mathfrak{s}_\OO:CF^-(L(p,q),K)\rightarrow \Spin^c(L(p,q)),$$ where the term on the right is the set of $\Spin^c$ structures on $L(p,q)$. These, in turn are in affine isomorphism with $\Z_p$. 

\begin{proposition}\label{prop:SpincGrad} 
Under the identification between $C^-(G_K)$ and $CF^-(L(p,q),K)$ of Proposition~\ref{prop:Boundary}, we  have $$\Sp({\bf x}) = \Sf({\bf x}) := \phi\circ \s_\OO({\bf x}),$$
\noindent for all $\x$,  where $\phi$ is an explicit identification between $\Spin^c$ structures on $L(p,q)$ and $\Z_p$ given in Section~4.1 of \cite{MR1957829}, \end{proposition}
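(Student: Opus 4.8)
The strategy is to compare two descriptions of the $\Spin^c$ structure $\s_\OO(\x)$ attached to a generator $\x$: the standard Heegaard-theoretic one and the combinatorial formula for $\Sp$. Recall that $\s_\OO(\x)$ is built as follows. On the Heegaard torus $T^2$, a generator $\x$ (together with one of the basepoints $\OO$) determines a vector field on $Y = L(p,q)$ — roughly, take a Morse function compatible with the Heegaard diagram, use the gradient flow, and modify it in tubular neighborhoods of the flowlines through the points of $\x$ and through the basepoint so that it is nonvanishing — and the homology class of this vector field is $\s_\OO(\x)$. The key structural fact, due to \ons, is that for two generators $\x, \y$ the difference $\s_\OO(\x) - \s_\OO(\y) \in H^2(Y;\Z) \cong H_1(Y;\Z) \cong \Z_p$ is Poincar\'e dual to $\epsilon(\x,\y)$, the class in $H_1(Y)$ obtained by closing up a collection of arcs along $\vec\alpha$ from $\x$ to $\y$ and a collection of arcs along $\vec\beta$ from $\y$ back to $\x$. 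So the first step is to establish: for any $\x = (\sigma, (b_0,\dots,b_{n-1}))$ and $\x_\OO = (\sigma_\OO, (a_0,\dots,a_{n-1}))$,
\[
\s_\OO(\x) - \s_\OO(\x_\OO) = \PD\big(\epsilon(\x, \x_\OO)\big) = \left( \sum_i b_i - \sum_i a_i \right) \cdot [\mu] \in \Z_p,
\]
where $[\mu]$ is the standard generator of $H_1(L(p,q))$. This is a direct homological computation on the grid torus: the $\alpha$-arcs lie in the rows and the $\beta$-arcs lie in the columns, and traversing a $\beta$-arc from the $k$-th intersection point to the $k'$-th intersection point on an $\alpha$ curve contributes $(k'-k)$ copies of the core of $V_\beta$; summing the signed index shifts over all $n$ components and noting that the total "rotation" around the rows cancels (the $\alpha$-contributions are null-homologous in $Y$ since the $\alpha_i$ bound disks, and similarly one organizes the $\beta$-contributions) gives exactly $\sum b_i - \sum a_i$ times a generator of $H_1$. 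I would do this carefully using the preferred fundamental domain, tracking exactly how the "$k$-th intersection point" labeling interacts with the slope-$(-p/q)$ lines; this bookkeeping, while elementary, is where sign and orientation conventions must be nailed down, and it is the main technical obstacle.

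Having fixed the \emph{difference}, the second step pins down the \emph{absolute} value, i.e.\ that $\Sf(\x_\OO) = q - 1 \in \Z_p$ under the identification $\phi$ of \cite{MR1957829}. Here I would identify $\x_\OO$ with a particularly simple generator — the "lower-left corners of the $\OO$-regions" generator — and observe that this is precisely the generator that appears in the standard genus-one Heegaard diagram for $L(p,q)$ after the destabilization/covering reduction, or more directly, that $(T^2, \alpha_0, \beta_0, O_0)$ is a genus-one Heegaard diagram for $L(p,q)$ for which \ons (see \cite{MR1957829}, §4.1) already computed the associated $\Spin^c$ structure to be the one $\phi$ sends to $q-1$. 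Since all $n$ components of $\x_\OO$ occupy "the same position" relative to each $O_i$-region, the $n$-fold diagram contributes the same class as the genus-one diagram, so $\s_\OO(\x_\OO)$ is the pullback of that distinguished $\Spin^c$ structure; applying $\phi$ gives the constant $q-1$. An alternative route, if the direct identification is awkward, is to invoke the lift to the universal cover $\pi: S^3 \to L(p,q)$ of \cite{GT0701460} and use the additivity of $\Spin^c$-assignments under covers together with the known ($\Spin^c = 0$) computation for grid diagrams in $S^3$ from \cite{GT0607691}; but I expect the direct genus-one computation to be cleaner.

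Combining the two steps: for arbitrary $\x$,
\[
\Sf(\x) = \phi\big(\s_\OO(\x)\big) = \phi\big(\s_\OO(\x_\OO)\big) + \left( \sum_i b_i - \sum_i a_i \right) = (q-1) + \left( \sum_i b_i - \sum_i a_i \right) \pmod p,
\]
which is exactly $\Sp(\x)$. Finally, $U$-equivariance of $\Sf$ (which follows from $U$ acting by the basepoint action, shifting Maslov and Alexander but not $\Spin^c$) matches the stipulated rule $\Sp(U_i \x) = \Sp(\x)$, so the combinatorial grading agrees with $\Sf$ on all homogeneous elements. I anticipate that the bulk of the write-up — and the only place real care is needed — is the homological computation of $\epsilon(\x,\x_\OO)$ on the twisted torus in step one; steps two and three are then short.
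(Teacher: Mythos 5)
Your proposal follows essentially the same route as the paper: the relative step is the paper's Lemma~\ref{lemma:RelSpinc} (computing $\s_\OO(\x_2)-\s_\OO(\x_1)$ via the class $\epsilon(\x_2,\x_1)$ and showing it equals $\sum c_i - \sum b_i$), and the anchoring step is Lemma~\ref{lemma:SpincCanGen} (showing $\Sf(\x_\OO)=q-1$ by comparison with the standard genus-one diagram of \cite{MR1957829}). The only place you are vaguer than the paper is the anchoring step, where your ``all components occupy the same position'' heuristic is replaced in the paper by a sequence of handleslides bringing the $\beta$ curves to the stabilized standard form, followed by a $\Spin^c$-preserving Whitney triangle in the resulting Heegaard triple diagram; that is the precise mechanism your ``destabilization'' gesture needs.
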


Note that the above identification indicates that $C^-(G_K)$ splits as a direct sum of $p$ subcomplexes, each freely generated as a $\Z_2[U_0,\ldots ,U_{n-1}]$ module by those $\x\in \mathcal{G}$ with a fixed value of $\Sp$.

\subsubsection{$\M$ grading}
Knot Floer homology is endowed with a grading, denoted $\gr$, which takes values in $\Q$ and was originally defined in terms of the Maslov index and characteristic classes.  In terms of the combinatorics of $G_K$, we can associate a rational number ${\bf M}({\bf x}) \in \Q$ to each generator ${\bf x}$ which we will show coincides  with $\gr$.   To do this,  first define a function 
\[W\colon \left\{\begin{array}{cc}\mbox{Finite sets}\\ \mbox{of points in } G_K\end{array}\right\} \rightarrow \left\{\begin{array}{cc}\mbox{Finite sets of pairs}\\ (a,b) \mbox{ with } a \in [0,pn), b \in [0,n)\end{array}\right\}\]
which assigns to each set of $n$ points in the preferred fundamental domain for $G_K$ (see Figure~\ref{fig:twistedgrid}) the $n$--tuple of its $\R^2$ coordinates, written with respect to the basis
\[ \left\{ \vec{v}_1 = \left(\frac{1}{np},0 \right), \vec{v}_2= \left(-\frac{q}{np},\frac{1}{n}\right) \right\} .\]

Let us require that $\OO$, $\XX$ lie in the centers of their respective parallelograms.  Then  $W({\bf x})$ has integer entries, while $W({\OO})$ and $W({\XX})$ have half-integer entries.
  
Next define a function 
\[C_{p,q}\colon \left\{\begin{array}{cc}\mbox{Finite sets of pairs}\\(a,b) \mbox{ with } a \in [0,pn),b \in [0,n)\end{array}\right\} \rightarrow \left\{\begin{array}{cc}\mbox{Finite sets of pairs}\\(a,b) \mbox{ with } a,b \in [0,pn)\end{array}\right\}\]
which sends an $n$--tuple of coordinates $$\{(a_i,b_i)\}_{i=0}^{n-1}$$ to the $pn$--tuple of coordinates $$\left((a_i + nqk)\mod np,\ b_i+nk\right)_{i=0,k=0}^{i=n-1,k=p-1}.$$

\noindent Let $\widetilde{W} = C_{p,q} \circ W$.  

Furthermore, let $\mathcal{I}$ be the function (defined in \cite{GT0610559}) whose input is an ordered pair $(A,B)$, where each of $A, B$ is a finite set of coordinate pairs.  $\mathcal{I}(A,B)$ is defined as the number of pairs $(a_1,a_2) \in A$, $(b_1,b_2) \in B$ such that $a_i < b_i$ for $i=1,2$.

$\M$ is now defined by:
$$\begin{array}{c}
{\bf M}({\bf x}) = \frac{1}{p}\left(\mathcal{I}(\widetilde{W}({\bf x}),\widetilde{W}({\bf x})) - \mathcal{I}(\widetilde{W}({\bf x}),\widetilde{W}(\OO)) - \mathcal{I}(\widetilde{W}(\OO),\widetilde{W}({\bf x})) + \mathcal{I}(\widetilde{W}(\OO),\widetilde{W}(\OO)) + 1\right) \\ + d(p,q,q-1) + \frac{p-1}{p},\\
\end{array}$$
\noindent  where $d(p,q,i)$ is the inductively-defined function:
\begin{align*}
 d(1,0,0) &= 0\\
 d(p,q,i) &= \left(\frac{pq - (2i+1-p-q)^2}{4pq}\right)-d(q,r,j),
\end{align*}
\noindent with $r$ and $j$ being the reductions modulo $q$ of $p$ and $i$, respectively.  
\begin{remark} \label{remark:orientation} See \cite{MR1957829} for an explanation of $d(p,q,i)$.  Here, we use the convention that $L(p,q)$ is $-\frac{p}{q}$ surgery on the unknot, which is opposite of the convention used in \cite{MR1957829}.  Their Heegaard diagram for $-L(p,q)$ is (according to our convention) a Heegaard diagram for $L(p,q)$.
\end{remark}

\noindent Extend ${\bf M}$ to the entire $\Z_2[U_0, \ldots, U_{n-1}]$ module by the rule
$${\bf M}(U_i{\bf x}) = {\bf M}({\bf x}) - 2 \mathrm{\ for\ each\ } i \in 0, \ldots, n-1.$$

The following proposition says that $\M$ agrees with the Maslov grading on Floer homology.
\begin{proposition}\label{prop:MaslovGrad}
	Under the identification between  $C^-(G_K)$ and $CF^-(L(p,q),K)$,  we have $$\M(\x)=\gr(\x),$$ for all $\x$. 
\end{proposition}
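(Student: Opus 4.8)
The plan is to reduce the statement to the corresponding fact for the lift of $G_K$ to a grid diagram in $S^3$ and then invoke the known Maslov grading formula for toroidal grid diagrams from \cite{GT0607691} (together with the grading-change formulas of \cite{GT0610559,GT0608001}). First I would recall the covering correspondence from \cite{GT0701460}: under $\pi\colon S^3\to L(p,q)$, the grid diagram $G_K$ lifts to a genuine toroidal grid diagram $\widetilde{G}_{\widetilde K}$ of grid number $pn$ for a link $\widetilde K\subset S^3$, and each generator $\x\in\mathcal{G}$ lifts to a $p$--fold ``periodic'' generator $\widetilde{\x}$ whose coordinate data is exactly $\widetilde{W}(\x)=C_{p,q}\circ W(\x)$. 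The functions $W$, $C_{p,q}$, and $\mathcal{I}$ were built precisely so that the combinatorial quantity $\mathcal{I}(\widetilde{W}(\x),\widetilde{W}(\x)) - \mathcal{I}(\widetilde{W}(\x),\widetilde{W}(\OO)) - \mathcal{I}(\widetilde{W}(\OO),\widetilde{W}(\x)) + \mathcal{I}(\widetilde{W}(\OO),\widetilde{W}(\OO))$ computes (the relevant part of) the Maslov grading of $\widetilde\x$ relative to $\widetilde{\x}_{\OO}$ in the $S^3$ grid complex, via the Manolescu--Ozsv\'ath--Sarkar formula. Thus the first half of the bracketed expression in the definition of $\M$ is, by construction, $\operatorname{gr}_{S^3}(\widetilde\x)-\operatorname{gr}_{S^3}(\widetilde{\x}_{\OO})$ (up to the normalization constants I address below).

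Next I would set up the comparison between the $S^3$ Maslov grading of the lift and the Maslov grading $\operatorname{gr}=\gr$ of $\x$ in $CF^-(L(p,q),K)$. The key input is the behavior of the Maslov grading under the covering map: the absolute $\Q$--grading on $\widehat{HF}(L(p,q),\s)$ is pinned down by the $d$--invariant $d(p,q,i)$ (Section~4.1 of \cite{MR1957829}), and the relationship between the Heegaard Floer complex of $L(p,q)$ in a given $\Spin^c$ structure and the ``$\Z_p$--periodic'' part of the complex of its universal cover $S^3$ is governed by \cite{GT0610559} (the index function $\mathcal{I}$) and \cite{GT0608001}. Concretely, for the basepoint generator $\x_{\OO}$ one checks directly from the definitions that $\M(\x_\OO)=d(p,q,q-1)+\frac{p-1}{p}+\frac1p$, and this is arranged to match $\gr(\x_\OO)$: the summand $d(p,q,q-1)$ supplies the $d$--invariant of the $\Spin^c$ structure $\s_\OO(\x_\OO)$ (which by Proposition~\ref{prop:SpincGrad} has $\Sf$-value $q-1$), while $\frac{p-1}{p}+\frac1p=1$ accounts for the fact that $\x_\OO$ sits at the ``top'' of the local picture at the $\OO$ basepoints, exactly as in the $S^3$ case. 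Having fixed the base case, the proposition follows from the relative statement: $\gr(\x)-\gr(\x_\OO)=\frac1p\bigl(\operatorname{gr}_{S^3}(\widetilde\x)-\operatorname{gr}_{S^3}(\widetilde{\x}_\OO)\bigr)$, which is precisely the content of the $\frac1p(\cdots)$ term together with the $\mathcal{I}$--theoretic grading formula.

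The main obstacle, and where the real work lies, is establishing this last relative identity cleanly — i.e.\ proving that dividing the $S^3$ Maslov grading of the $p$--periodic lift by $p$ recovers the $L(p,q)$ Maslov grading, rather than some shifted or rescaled version. This is not formal: one must track how Lipshitz's index formula for disks in $L(p,q)$ compares with the index formula for their lifts (a disk in $L(p,q)$ of Maslov index $\mu$ lifts to a disk in $S^3$ of index $p\mu$, since the covering is $p$--to--$1$), verify that admissibility and the count of $O$--multiplicities are compatible with the lift (each $O_i$ in $G_K$ has $p$ preimages, matching the way $C_{p,q}$ replicates coordinates), and confirm that the overall additive normalization is correctly fixed by the single base generator $\x_\OO$. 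I would carry this out by: (i) proving the base-case computation $\M(\x_\OO)=\gr(\x_\OO)$ using the $d$--invariant formula and the identification of $\s_\OO(\x_\OO)$ from Proposition~\ref{prop:SpincGrad}; (ii) showing that both $\M(\x)-\M(\x_\OO)$ and $\gr(\x)-\gr(\x_\OO)$ equal $\frac1p$ times the relative $S^3$ Maslov grading of the lifts, the former by the combinatorial definition of $\mathcal{I}$ and \cite{GT0607691}, the latter by the covering-space argument via \cite{GT0610559,GT0608001}; and (iii) concluding by additivity. A secondary technical point to dispatch is a well-definedness/independence check — that $\M$ does not depend on the chosen fundamental domain or on reordering — which follows because the same ambiguities are absent (or cancel) in the $S^3$ grid formula it is modeled on.
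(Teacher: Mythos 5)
Your overall route is the same as the paper's: lift $G_K$ to a grid diagram for $\widetilde K\subset S^3$ of grid number $pn$, identify the $\mathcal{I}$--expression with the Manolescu--Ozsv\'ath--(Szab\'o--Thurston) Maslov formula for $\widetilde\x$, use the covering--space relative grading identity $\Mf(\x)-\Mf(\y)=\frac1p\bigl(\Mf(\widetilde\x)-\Mf(\widetilde\y)\bigr)$ from \cite{GT0608001}, and pin down the additive constant on the single generator $\x_\OO$. That skeleton is correct and matches Lemmas~\ref{lemma:CoverPicture}--\ref{lemma:AbsMasGrUp} and Corollary~\ref{cor:CompMas}.

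However, your base case --- the one step that actually fixes the absolute normalization, and hence the only non-formal part of the argument --- is both wrong and unjustified. You assert that ``one checks directly from the definitions'' that $\M(\x_\OO)=d(p,q,q-1)+\frac{p-1}{p}+\frac1p=d(p,q,q-1)+1$, and that $\gr(\x_\OO)$ equals the same thing because $\x_\OO$ ``sits at the top of the local picture.'' Neither is true. On the combinatorial side, the four $\mathcal{I}$--terms do not cancel for $\x_\OO$: one computes $\mathcal{I}(\widetilde W(\x_\OO),\widetilde W(\x_\OO))-\mathcal{I}(\widetilde W(\x_\OO),\widetilde W(\OO))-\mathcal{I}(\widetilde W(\OO),\widetilde W(\x_\OO))+\mathcal{I}(\widetilde W(\OO),\widetilde W(\OO))+1=-(pn-1)$ (this is exactly the $S^3$ statement $\Mf(\widetilde\x_\OO)=-(pn-1)$ of Lemma~\ref{lemma:AbsMasGrUp}), so $\M(\x_\OO)=\frac1p\bigl(-(pn-1)\bigr)+d(p,q,q-1)+\frac{p-1}{p}=d(p,q,q-1)-(n-1)$. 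On the Floer side, the matching fact $\gr(\x_\OO)=d(p,q,q-1)-(n-1)$ is not a consequence of $\x_\OO$ being ``at the top'': it is the statement that $\x_\OO$ is the \emph{lowest} Maslov-degree generator in its $\Spin^c$ structure of an $(n-1)$--times stabilized genus-one diagram, and proving it requires a genuine argument --- in the paper, a sequence of handleslides and a Maslov index~$0$ Whitney triangle in a Heegaard triple diagram connecting $\x_\OO$ to a generator of known grading (Lemma~\ref{lemma:AbsMasGr}, using Theorem~7.1 of \cite{MR2222356}). Without that computation your proof has no anchor: the relative identity via \cite{GT0608001} only determines $\M-\gr$ up to a constant in each $\Spin^c$ structure, and the constant you propose is off by $n$.

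A smaller point: the heuristic ``a disk of Maslov index $\mu$ in $L(p,q)$ lifts to a disk of index $p\mu$'' is not how the comparison is established; a Whitney disk lifts to the union of its $p$ translates, and the relative grading identity is precisely the (nontrivial) content of \cite{GT0608001}, which should simply be cited rather than re-derived by this heuristic.
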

The identification of $\M$ with $\gr$ implies, in particular, that $\M(\partial^-(\x))=\M(\x)-1$, since $\Mf$ is the homological grading on Floer homology.


\subsubsection{$\A$ grading}\label{sec:combAlexGrad}
Knot Floer homology has another grading, $\Af\in \Q$, called the rational Alexander grading, which was originally defined in terms of Chern classes of relative $\Spin^c$ structures.  Moreover, $\Af$ equips $CF^-(L(p,q),K)$ with the structure of a filtered chain complex, in the sense that $\Af(\partial^-(\x))\le \Af(\x)$.  We can recover this grading as a combinatorial quantity, which we denote by $\A$.  Note that our definition of  $\M$ above depended on  the $n$--tuple of basepoints, $\OO$.  When we wish to emphasize this dependence, we write $\M_{\OO}$.  When using $\XX$ instead of $\OO$, we write $\M_{\XX}$.

Now we define
 $${{\A}}({\bf x}) = \frac{1}{2}\left({\M}_{\OO}({\x}) - {\M}_{\XX}({\x})- (n-1)\right),$$
\noindent which measures the difference in Maslov gradings associated to the different choices of basepoints. We extend ${\A}$ to a grading on the entire $\Z_2[U_0, \ldots, U_{n-1}]$ module by 
$${\A}(U_i{\x}) = {\A}({\x}) - 1 \mbox{ for each } i \in 0, \ldots, n-1.$$

This grading agrees with the rational Alexander grading on knot Floer homology defined using relative $\Spin^c$ structures on the knot complement:

\begin{proposition}\label{prop:AlexGrad} Under the identification between  $C^-(G_K)$ and $CF^-(L(p,q),K)$, we have $$\A(\x)=\Af(\x)$$ \noindent for all $\x$. 
\end{proposition}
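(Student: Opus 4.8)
The plan is to leverage the already-established identification of $\M$ with the Maslov grading (Proposition~\ref{prop:MaslovGrad}) together with the standard relationship, in Heegaard Floer theory, between the rational Alexander grading and the difference of the two relative Maslov gradings obtained by switching the roles of the $\OO$ and $\XX$ basepoints. First I would recall the definition of $\Af$ from \cite{GT0504404}: for a doubly-pointed Heegaard diagram for $(L(p,q),K)$, the relative Alexander grading between two generators $\x,\y$ in the same $\Spin^c$ structure is computed via the evaluation of a relative $\Spin^c$ class on the knot complement, and Ozsv{\'a}th--Szab{\'o} show this can be repackaged as the quantity $n_{\XX}(\phi) - n_{\OO}(\phi)$ summed over a domain connecting $\x$ to $\y$. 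The absolute lift of this grading is pinned down so that $\Af$ is symmetric under the conjugation symmetry of knot Floer homology; equivalently, $\Af(\x) = \tfrac12(\gr_{\OO}(\x) - \gr_{\XX}(\x)) + c$ for a constant $c$ depending only on $(p,q)$ and $n$, where $\gr_{\OO}$ and $\gr_{\XX}$ are the two absolute Maslov gradings coming from using $\OO$ or $\XX$ as the basepoint set defining $CF^-$.

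The key steps, in order, are as follows. (1) Observe that applying Proposition~\ref{prop:MaslovGrad} twice---once with the basepoints $\OO$ and once with the basepoints $\XX$---gives $\M_{\OO}(\x) = \gr_{\OO}(\x)$ and $\M_{\XX}(\x) = \gr_{\XX}(\x)$, where $\gr_{\XX}$ is the absolute Maslov grading of the Heegaard diagram $(T^2,\vec\alpha,\vec\beta,\XX)$ regarded as a diagram for $L(p,q)$ alone (the roles of $\OO$ and $\XX$ are symmetric in Definition~\ref{def:TTGridDiag}, and switching them corresponds to reversing the orientation of $K$, which does not affect $L(p,q)$ itself). (2) Recall from \cite{GT0504404} the formula expressing $\Af$ in terms of relative $\Spin^c$ structures, and massage it---using that $e(\mathcal D) + n_\x(\mathcal D) + n_\y(\mathcal D)$ type expressions compute relative Maslov indices---into the relation $\gr_{\OO}(\x) - \gr_{\XX}(\x) = 2\Af(\x) + (\text{const})$. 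The shift $(n-1)$ appearing in the definition of $\A$ is exactly this normalization constant; I would verify it by checking a single explicit example, e.g.\ a grid number one diagram, where all the invariants are computed directly from $d$-invariant formulas, or alternatively by invoking the conjugation-symmetry normalization of $\Af$ and matching it against the symmetry of $\M_{\OO} - \M_{\XX}$. (3) Check compatibility with the $U_i$-action: since $\M_{\OO}(U_i\x) = \M_{\OO}(\x) - 2$ and likewise for $\M_{\XX}$, the formula gives $\A(U_i\x) = \A(\x) - 1$, matching both the stated extension rule and the known behavior of $\Af$ under $U$.

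I expect the main obstacle to be step (2): pinning down the additive constant $(n-1)$ and confirming the factor $\tfrac12$, rather than the conceptual structure. The relation between $\Af$ and the difference of Maslov gradings is ``morally'' standard, but in the lens space setting one must be careful that the two diagrams $(T^2,\vec\alpha,\vec\beta,\OO)$ and $(T^2,\vec\alpha,\vec\beta,\XX)$ present $L(p,q)$ with compatible orientations and $\Spin^c$-identifications, and that the $\Spin^c$ refinement of the Alexander grading (which is only a $\Q$-grading, split over $\Spin^c(L(p,q))$) is handled consistently with the $d$-invariant conventions flagged in Remark~\ref{remark:orientation}. A clean way to finish is to reduce everything, via the covering-space correspondence with grid diagrams for links $\tilde K \subset S^3$ described in the outline, to the already-known $S^3$ grid formula for the Alexander grading from \cite{GT0607691} (and its lens-space adaptation via \cite{GT0610559},\cite{GT0608001}): the Alexander grading in $S^3$ is likewise $\tfrac12(\M_{\OO} - \M_{\XX}) - \tfrac{n-1}{2}$ per component, so the $(n-1)$ here is inherited, up to a factor of $p$ bookkeeping from the $p$-fold cover, from that formula.
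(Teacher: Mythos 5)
Your proposal is correct and matches the paper's proof in all essentials: the paper reduces Proposition~\ref{prop:AlexGrad} to the identity $\Af_{\OO,\XX}(\x)=\tfrac12\left(\M_\OO(\x)-\M_\XX(\x)-(n-1)\right)$, establishes it up to an additive constant by passing to the $p$--fold cover and combining the explicit $S^3$ grid formulas of \cite{GT0610559} with the $1/p$--scaling of relative Maslov and Alexander gradings from \cite{GT0608001} and \cite{GT0604360}, and then kills the constant exactly as you suggest, via the orientation-reversal symmetry $\Af_{\XX,\OO}(\x)=-\Af_{\OO,\XX}(\x)-(n-1)$ (Lemma~\ref{lemma:Symmetry}). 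The one caution is that your first-listed route---deducing the relation from domain/Maslov-index formulas within $L(p,q)$ and checking a single example---only pins the constant within one $\Spin^c$ structure (there are no domains connecting generators in different $\Spin^c$ structures), so the covering-space reduction you describe at the end is the step that actually makes the constant uniform over all $p$ of them, and is precisely what the paper does.
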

In particular we see that, in light of the above comments, $\A(\partial^-(\x))\le \A(\x)$, and hence defines a filtration of $C^-(G_K)$.  It is the filtered chain homotopy type of  $C^-(G_K)$ which is the primary knot invariant associated to $(L(p,q),K)$.

\subsection{Algebraic Derivatives of $(C^-(G_K),\partial^-)$}

We quickly mention some relevant algebraic variations of this construction (cf.\ Section~2.3 of \cite{GT0610559}).

Propositions~\ref{prop:Boundary} and~\ref{prop:AlexGrad} indicate that $C^-(G_K)$ is a filtered complex.  As such, we can consider its associated graded complex, which we denote $(CK^-(G_K),\partial^-_K)$.   In particular, it has the same generators and gradings, but the differential is restricted to counting $\Par \in \PG({\bf x},{\bf y})$ which miss the $\mathbb{X}$ basepoints:  $$\partial^-_K({\bf x}) = \sum_{\substack{ \\y \in \mathbb{T}_{\alpha} \cap \mathbb{T}_\beta\\ }} \,\,\sum_{\substack{\Par \in \PG(x,y)\\  n_{\mathbb{X}}(\Par) = 0}} \,\, U_0^{n_{O_0}(\Par)}\cdots U_{n-1}^{n_{O_{n-1}}(\Par)} {\bf y}.$$

As another variant, we can consider the quotient complex of $C^-(G_K)$ obtained by setting $U_0 = 0$, with $\Sp,\M,\A$ gradings  induced from $C^-(G_K)$.  We this quotient by $\widehat{C}(G_K)$.  It is also filtered by $\A$, and we denote its associated graded complex by $\widehat{CK}(G_K)$.  The resulting homology groups, denoted $\widehat{HK}(G_K)$, are knot invariants and are isomorphic to the so-called knot Floer homology groups of $K$.   These are the groups usually denoted by $\widehat{HFK}(L(p,q),K)$.

\section{Background on Heegaard Floer homology and Rationally Null-homologous Knots} \label{section:HFBackground}
In order to prove the statements in Section~\ref{section:CombSummary}, we will need to recall some basic facts about knot Floer homology for knots in rational homology spheres ($\mathbb{Q}HS^3$'s).
The definitions in this section apply to all $\mathbb{Q}HS^3$'s, and not just lens spaces.  We refer the reader to \cite{MR2065507, GT0306378} for the original definitions of knot Floer homology, \cite{GT0512286, GT0607691, GT0610559} for discussions of the impact of extra basepoints, and \cite{GT0504404,GT0604360} for discussions of the structure of the invariant when $K$ is not null-homologous.

In the remainder of this section, let $Y$ be a $\mathbb{Q}HS^3$ and $K$ be an oriented knot in $Y$ (note that $K$ may not be null-homologous).  The starting point in the construction of a Heegaard Floer chain complex for $(Y,K)$ is a handlebody decomposition of $Y$ obtained from a particular Morse-Smale pair, $(f,h)$, where $f$ is a self-indexing Morse function $f\colon Y \rightarrow \mathbb{R}$ with $n$ index $3$ and $n$ index $0$ critical points and $h$ is a Riemannian metric. 

One chooses $(f,h)$ so that $K$ can be realized as the union $\gamma_\XX \cup \gamma_\OO$ of $n$ (upward) flowlines $\gamma_{\mathbb{X}}$ of $\nabla(f)$ which join the index $0$ critical points bijectively to the index $3$ critical points and $n$ (downward) flowlines $\gamma_{\mathbb{O}}$ of $-\nabla(f)$ which join the index $3$ critical points bijectively to the index $0$ critical points.  The level set $\Sigma = f^{-1}(\frac{3}{2})$ is a surface of genus $g$ which provides a Heegaard splitting surface for the handlebody decomposition associated to $f$.  In particular, $\Sigma$ divides $Y$ into two genus $g$ handlebodies $Y_\alpha = f^{-1}[0,\frac{3}{2}]$ and $Y_\beta = f^{-1}[\frac{3}{2},3]$, with $\Sigma = \partial Y_\alpha = -\partial Y_\beta$.

\begin{definition} \label{def:CompHeegDiag} Such a pair, $(f,h)$, determines a five-tuple $(\Sigma, \vec{\alpha}, \vec{\beta}, \OO, \XX)$ called a {\it Heegaard diagram for Y compatible with the knot K}. Here, 
\begin{itemize}
\item  $\vec{\alpha} = \{\alpha_0, \ldots, \alpha_{g+n-2}\}$ are the $g+n-1$  simple closed curves that arise as the intersection of $\Sigma$ with the stable submanifolds (with respect to $-\nabla(f)$) of the index $1$ critical points.   They are mutually disjoint and span a $g$--dimensional subspace  $V\subset H_1(\Sigma;\R)$.

\item $\vec{\beta} = \{\beta_0, \ldots, \beta_{g+n-2}\}$ are the $g+n-1$  simple closed curves that arise as the intersection of $\Sigma$ with the unstable submanifolds (with respect to $-\nabla(f)$) of the index $2$ critical points.   They are mutually disjoint and span a $g$--dimensional subspace  $V'\subset H_1(\Sigma;\R)$, complementary to $V$.

\item $\OO = \{O_0, \ldots O_{n-1}\}$  are $n$ points in $\Sigma - \vec{\alpha} - \vec{\beta}$ which arise as the intersection of $\Sigma$ with the $n$ flowlines in $\gamma_\OO$.
\item $\XX = \{X_0, \ldots, X_{n-1}\}$ are $n$ points in $\Sigma - \vec{\alpha} - \vec{\beta}$ which arise as the intersection of $\Sigma$ with the $n$ flowlines in $\gamma_\XX$.
\end{itemize}
\end{definition}

Fix an oriented meridian $\mu$ of the oriented knot $K$ as the boundary of a small disk neighborhood of a point of $\OO$ on $\Sigma$ as indicated in Figure~\ref{fig:meridian}.  This meridian provides fixed generators $[\mu]$ for $H_1(Y-K;\Q)$ and $\PD[\mu]$ for $H^2(Y,K;\Q)$.

\begin{figure}
\centering
\input{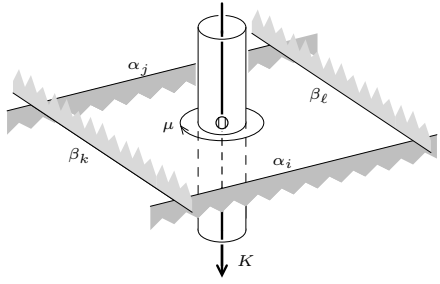}
\caption{A local picture of $K$ at a point of $\OO$ on $\Sigma$ and the corresponding fixed meridian $\mu$.}
\label{fig:meridian}
\end{figure}

Let $\Sym^{g+n-1}(\Sigma)$ denote the $(g+n-1)$-fold symmetric product of $\Sigma$, and let $\mathbb{T}_{\alpha}$ and $\mathbb{T}_{\beta}$ denote the image of $\alpha_0 \times \ldots \times \alpha_{g+n-2}$ and $\beta_0 \times \ldots \times \beta_{g+n-2}$ contained within it.  The chain complex $(CF^-(Y,K), \bdry^-)$ is freely generated as a module over the polynomial ring $\mathbb{Z}_2[U_0, \ldots, U_{n-1}]$ by the elements $\x \in \T_{\alpha} \cap \T_{\beta}$.   The boundary map is defined on generators by
\[\partial^-\x = \sum_{\substack{ \\{\bf y} \in \T_{\alpha} \cap \T_\beta\\ }} \,\,\sum_{\substack{\phi \in \pi_2(\x,{\bf y})\\ \mu(\phi) = 1}} \# \widehat{\mathcal{M}}(\phi) \,\, U_0^{n_{O_0}(\phi)}\cdots U_{n-1}^{n_{O_{n-1}}(\phi)} {\bf y}.\]
and extends to the entire module by linearity over addition and equivariance with respect to the action of each variable, $U_i$.   In the double summation,  $\pi_2({\bf x},{\bf y})$ denotes the set of homotopy classes of maps of Whitney disks connecting ${\bf x}$ to ${\bf y}$, $\# \widehat{\mathcal{M}}(\phi)$  the signed number of points, modulo $2$, in the moduli space of unparameterized holomorphic representatives of $\phi$ (with respect to a fixed, generic family of almost complex structures on $\Sym^{g+n-1}(\Sigma)$),  $\mu(\phi)$  the Maslov index (expected dimension of the moduli space) of $\phi$, and $n_{O_i}(\phi)$ the algebraic intersection number of $\phi$ with 
\[\{O_i\} \times \Sym^{g+n-2}(\Sigma).\]

\noindent See \cite{MR2113019} and \cite{MR2065507} for further details.

\begin{remark} Strictly speaking, this definition of $CF^-(Y,K)$ is different from the definition in \cite{MR2065507}, since it allows for the use of multiple basepoints.  However, the fact that the filtered chain homotopy type of $CF^-(Y,K)$ over the ring $\mathbb{Z}_2[U_0, \ldots, U_{n-1}]$ agrees with the definition given above follows easily from the proof of the corresponding statement when $Y=S^3$, Proposition~2.3 of \cite{GT0607691}.  Indeed, any multiply-pointed Heegaard diagram for a knot in a $\Q HS^3$ can be reduced to the special form of Lemma~2.4 of \cite{GT0607691}, and the necessary gluing theorem {\em (}Proposition~6.5 of \cite{GT0512286}{\em)} holds in this context.
\end{remark}

\subsection{$\Spin^c$ structures, Maslov, and Alexander gradings} 
\label{subsec:GradDef}
The elements of $CF^-(Y,K)$ are endowed with three gradings.
Although we have primarily restricted attention to knots, we relax this restriction when discussing gradings.  We do this because the lift of a knot to a covering manifold may be a link, and it will be necessary to understand how to assign gradings in that case for the proofs of Propositions~\ref{prop:MaslovGrad} and \ref{prop:AlexGrad}. 
Thus, until Lemma~\ref{lemma:Symmetry} at the end of this section, let us assume $K$ is link of $\ell$ components.  The $i$th component $K_i$ of $K$ accounts for $n_i$ of the $\OO$ basepoints; $n_1 + \cdots + n_\ell = n$.  Let $\mu_i$ be an oriented meridian for $K_i$ as depicted in Figure~\ref{fig:meridian}.

Associated to each generator $\x \in \T_\alpha \cap \T_\beta$ is 
\begin{itemize}
  \item a {\em homological {\em(}Maslov{\em )} grading} $\Mf(\x) \in \Q$, and
  \item a {\em relative $Spin^c$ structure} $\us_{\OO,\XX}(\x) \in \underline{\Spin}^c(Y,K)$, which naturally gives rise to a pair $(\s_\OO(\x),\Af_{\OO,\XX}(\x)) \in \Spin^c(Y) \times \Q$.  The first term in the pair is the {\em $Spin^c$ grading} and the second is the {\em filtration} or {\em Alexander grading}.
\end{itemize}
These three gradings extend to gradings on the entire chain complex by:
\begin{align*}
\s_\OO(U_i\x) &= \s_\OO(\x),\\
\Af_{\OO,\XX}(U_i\x) &= \Af_{\OO,\XX}(\x) - 1, \mbox{ and} \\
\Mf(U_i\x) &= \Mf(\x) - 2
\end{align*}
for all $i \in 0, \ldots, n-1$.

The {\it Maslov grading} $\wtgr({\bf x})$ associated to a generator ${\bf x} \in \mathbb{T}_{\alpha} \cap \mathbb{T}_{\beta}$  is characterized (see Theorem~7.1 in \cite{MR2222356})\footnote{The condition that ${\bf x}$ be associated to a torsion $\Spin^c$ structure is automatically satisfied, since $Y$ is a $\Q HS^3$.} by the following properties:

\begin{enumerate}
\item ${\wtgr}({\xi}) = 0$ for $\xi$ the homogeneous generator of ${HF}^-(S^3) \cong \Z_2[U]$,\\
\item If ${\bf x},{\bf y} \in \mathbb{T}_\alpha \cap \mathbb{T}_\beta$, and $\phi \in \pi_2({\bf x},{\bf y})$, then $${\wtgr}({\bf x}) - {\wtgr}({\bf y}) = \mu(\phi) - 2n_{\OO}(\phi),$$ where $\mu(\phi)$ is the Maslov index (expected dimension of the moduli space of holomorphic representatives) of $\phi$ and $n_{\OO}(\phi) = \sum_{i=1}^n n_{O_i}(\phi)$.  In particular, the differential lowers ${\wtgr}$ by $1$,\\
\item Let $(\Sigma, \vec{\alpha},\vec{\gamma},\vec{\beta},\OO)$ be a Heegaard triple diagram associated to a presentation of $Y_{\alpha\beta}$ (the $3$--manifold whose Heegaard diagram is given by $(\Sigma, \vec{\alpha},\vec{\beta}, \OO)$) as surgery on a link in $Y_{\alpha\gamma} = S^3$ (as in Definition~4.2 of \cite{MR2222356}).  Then if ${\bf x} \in \mathbb{T}_{\alpha} \cap \mathbb{T}_{\gamma}$, ${\bf y} \in \mathbb{T}_\alpha \cap \mathbb{T}_\beta$, and $\widehat{\theta}$ the canonical top-degree generator of $HF^-(Y_{\gamma\beta})$ ($Y_{\gamma\beta}$ is a connected-sum of  $S^1 \times S^2$'s, cf.\ the discussion in the proof of Proposition~4.3 in \cite{MR2222356}), then $$\wtgr({\bf y}) - \wtgr({\bf x}) = -\mu(\psi) + 2n_{\OO}(\psi) + \frac{c_1(\t)^2 - 2\chi(W) - 3\sigma(W)}{4}.$$  Here $W$ is the cobordism associated to surgery on the link, $\t$ is a $\Spin^c$ structure on $W$ which restricts to the $\Spin^c$ structures $\s_\OO({\bf x}), \s_\OO({\bf y})$ on $Y_{\alpha\gamma}, Y_{\alpha\beta}$, respectively, and $\psi$ is a Whitney triangle in $Sym^{g+n-1}(\Sigma)$ connecting ${\bf x},\widehat{\theta},{\bf y}$, as in Section~8.1 of \cite{MR2113019}.\footnote{The Heegaard triple-diagram must, furthermore, be strongly $\t$--admissible, in the sense of Definition~8.8 of \cite{MR2113019}.}
\end{enumerate}

To understand the other two gradings, we must first recall some definitions.  First, let us say that two vector fields on a smooth manifold $M$ are {\em homologous} if they are homotopic in the complement of a finite number of open balls.  Now let us define a {\em $\Spin^c$ structure} on a closed, oriented, three-manifold, $M$, to be a homology class of nowhere-vanishing vector fields on $M$.  For an oriented three-manifold $M$ whose boundary consists of a collection of tori, a {\em relative $\Spin^c$ structure  on $M$ } is a homology class of nowhere-vanishing vector fields on $M$ pointing outward along $\partial M$ (here, the homotopy between homologous vector fields must be through fields pointing outward).  For $M = Y-N(K)$, the set of relative $\Spin^c$ structures is denoted by $\underline{\Spin}^c(Y,K)$.  We refer the reader to \cite{MR1484699} and also Section~I.4 of \cite{MR1958479} for more details.

Let us begin by describing the $\Spin^c$  and relative $\Spin^c$ structures associated to $\x\in \T_\alpha \cap \T_\beta$.  We will follow Section~3.6 of \cite{GT0512286} to construct nowhere-vanishing vector fields on $Y$ and $Y-N(K)$ associated to a generator $\x$ and the basepoints.

Begin with the vector field $\nabla(f)$ on $Y$.  Each component $x_i$ of the  generator $\x$ is the intersection of a gradient flowline, $\gamma_{x_i}$, connecting an index $1$ critical point to an index $2$ critical point of $f$.    To remedy the vanishing of $\nabla(f)$ at the index $1$ and $2$ critical points, choose any nowhere-vanishing extension of $\nabla(f)$ to neighborhoods of the $\gamma_{x_i}$.  To remedy the vanishing at the index $0$ and $3$ critical points, alter the neighborhoods of the components of $\gamma_{\OO}$ as in Figure~2 of \cite{GT0512286} so that $K$ is a collection of (oriented) closed orbits of the resulting vector field.  Define the homology class of this vector field to be $\s_\OO(\x)$.  This defines a map
\[ \s_\OO \colon \T_\alpha \cap \T_\beta \to \Spin^c(Y).\]

The above vector field contains $K$ as a collection of closed orbits and, moreover, the induced vector field on $Y-N(K)$ has a standard nowhere-vanishing vector field on its boundary.  One easily constructs a homotopy in a collar neighborhood of $\partial(Y-N(K))$ to a vector field oriented outward along $\partial(Y-N(K))$.  Let the homology class of this resulting vector field be denoted $\us_{\OO,\XX}(\x)$, so that we obtain a map
\[\us_{\OO,\XX} \colon \T_\alpha \cap \T_\beta \to \underline{\Spin}^c(Y,K).\]
(See also Section~2.3 of \cite{MR2065507} and Section~2.2 of \cite{GT0504404}.)  

Given a generator $\x \in \T_\alpha \cap \T_\beta$, its {\em relative $\Spin^c$ grading} is defined to be $\us_{\OO,\XX}(\x)$.  From $\uSpin^c(Y,K)$ there are two natural maps 
 $$\pi_{\mathfrak{s}}\colon \underline{\Spin}^c(Y,K) \rightarrow \Spin^c(Y)$$
  and 
  $$\pi_\A\colon \underline{\Spin}^c(Y,K) \rightarrow \Q^\ell$$
from which we obtain the $\Spin^c$ grading and the Alexander grading of $\x$.

The first map $\pi_\s$ is obtained by reversing the procedure described above that produces a vector field in $\underline{\Spin}^c(Y,K)$ from a vector field in $\Spin^c(Y)$.  Indeed, we have  $\s_\OO({\bf x}) = \pi_{\s} \circ \us_{\OO,\XX}({\bf x})$, and this composition defines the {\em $\Spin^c$ grading} of $\x$.

The second map $\pi_\A$ is obtained as in Section~4.4 of \cite{GT0604360} and measures the rational linking of the various components $K$ with the Poincar{\'e} dual of a certain cohomology class associated to a relative $\Spin^c$ structure.   
Recall we are assuming $K$ is a link of $\ell$ components $K_i$, $n_i$ is the number of $\OO$ basepoints for $K_i$, and $\mu_i$ is an oriented meridian for $K_i$.
Now define
\[\mathfrak{H}_\OO \colon \underline{\Spin}^c(Y,K) \to H^2(Y,K;\Q)\]
where
\[\mathfrak{H}_\OO(\us) = \frac{c_1(\us) - \sum_{i=1}^{\ell}(2n_i - 1)\PD[\mu_i]}{2}.\]

The map 
\[c_1 \colon \underline{\Spin}^c(Y,K) \to H^2(Y,K;\Q)\] 
is given by 
\[ c_1(\us) = \us - J\us.\] 
Here the involution 
\[J \colon \underline{\Spin}^c(Y,K) \to \underline{\Spin}^c(Y,K)\]
 is defined by first taking a representative vector field, $v$, to its reverse, $-v$,  and then performing a homotopy of $-v$ in a collar neighborhood of the boundary tori so that the resulting field points outward.

For each $i = 1, \dots, \ell$, let $h_i \in H_2(Y,K;\Q)$ denote the hom dual of $\PD[\mu_i]$ and define $\pi_{A_i}$ by  
\[\pi_{A_i}(\s) = \langle\mathfrak{H}_\OO(\us),h_i\rangle \in \Q.\]
$\pi_{\A}(\s)$ is then defined as 
\[\pi_{\A}(\s) = (\pi_{A_1}(\s) ,...,\pi_{A_\ell}(\s) )\in \Q^\ell.\]

From this we can define the {\em Alexander multi-grading}, of a generator $\x \in \T_\alpha \cap \T_\beta$ to be the $\ell$-tuple of rational numbers
$$( A^\mathfrak{f}_1(\x), ... A^\mathfrak{f}_l(\x))\in \Q^l,$$
\noindent where 
$$A^\mathfrak{f}_i(\x)= \pi_{A_i} \circ \us_{\OO,\XX}(\x)  =\langle\mathfrak{H}_\OO \circ  \us_{\OO,\XX}({\bf x}),h_i\rangle \in \Q.$$

 The {\em Alexander grading}, $\Af_{\OO,\XX}$, of a generator $\x \in \T_\alpha \cap \T_\beta$ is then defined to be the sum of the components in the multi-grading
\[\Af_{\OO,\XX}({\x})=  \sum_{i=1}^\ell A^\mathfrak{f}_i(\x). \]

This formula for the Alexander grading is a generalization of the formula given in \cite{GT0512286} and \cite{GT0604360}, normalized so that the homology satisfies certain symmetries (as in Section~8 of \cite{GT0512286} and Section~4.3 of \cite{GT0604360}).  See Lemma~\ref{lemma:Symmetry} below.

In this way, we obtain the three gradings, $\Mf_{\OO}({\bf x}), \s_{\OO}({\bf x}), \Af_{\OO,\XX}({\bf x})$, associated to $\x \in \mathbb{T}_\alpha \cap \mathbb{T}_\beta$.  Note that the Maslov and $\Spin^c$ gradings depend only on the $\OO$ basepoints, while the Alexander grading depends on both the $\OO$ and $\XX$ basepoints.

The chain complex $CF^-(Y,K)$ splits according to elements of $\Spin^c(Y)$:  
\[CF^-(Y,K) = \bigoplus_{\mathfrak{s} \in Spin^c(Y)} CF^-(Y,K;\mathfrak{s}).\]
For a fixed $\s \in \Spin^c(Y)$, the rational $\Af_{\OO,\XX}$ and $\Mf$ gradings are lifts of a relative $\Z$ grading.  That is, for ${\bf x},{\bf y} \in \mathbb{T}_\alpha \cap \mathbb{T}_\beta$ satisfying $\mathfrak{s}_{\mathbb{O}}({\bf x}) = \mathfrak{s}_{\mathbb{O}}({\bf y})$, we have that 
\begin{align*}
\Af_{\OO,\XX}(\x) - \Af_{\OO,\XX}(\y) & \in \Z, \mbox{ and} \\
\Mf_\OO(\x) - \Mf_\OO(\y) & \in \Z.
\end{align*}

Exchanging the roles of $\OO$ and $\XX$ has the effect of reversing the orientation on $K$.  This induces the $2n$--pointed Heegaard diagram $(\Sigma, \vec{\alpha},\vec{\beta}, \XX,\OO)$ compatible with the pair $(Y,-K)$.  With the appropriate exchanges, we obtain the gradings $\s_\XX(\x)$, $\us_{\XX,\OO}(\x)$, $\Af_{\XX,\OO}(\x)$, and $\Mf_\XX(\x)$ for a generator $\x \in \T_\alpha \cap \T_\beta$.  For the definition of $\Af_{\XX,\OO}(\x)$, note that whereas $\mu_i$ is the oriented meridian of $K_i$, its reverse $-\mu_i$ is the oriented meridian of $-K_i$.  

We close the section with a lemma about a symmetry of the Alexander grading under orientation reversal of $K$ in the case that $K$ is a knot.  This will be useful in proving that the combinatorial definition of the $\A$ grading given in Section~\ref{section:CombSummary} matches the definition of the $\Af$ grading detailed above.

\begin{lemma}\label{lemma:Symmetry}
Let $K$ be an oriented knot in a rational homology sphere $Y$.   Let $\x\in\T_{\alpha} \cap \T_\beta$ be a generator in $CF^-(Y,K)$, associated to a $2n$--pointed Heegaard diagram $(\Sigma, \vec{\alpha}, \vec{\beta}, {\OO},{\XX})$ for the pair $(Y,K)$.  Then
 \[\Af_{\XX,\OO}(\x) = -\Af_{\OO,\XX}(\x) - (n-1).\]
\end{lemma}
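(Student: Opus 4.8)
The plan is to unwind the definitions so that the lemma becomes a single statement about relative $\Spin^c$ structures, and then to verify that statement by a local analysis near $K$. Since $K$ is a knot we have $\ell=1$ and $n_1=n$, so, using $\langle\PD[\mu],h\rangle=1$,
\[ \Af_{\OO,\XX}(\x)=\langle\mathfrak{H}_\OO(\us_{\OO,\XX}(\x)),h\rangle=\tfrac12\langle c_1(\us_{\OO,\XX}(\x)),h\rangle-\tfrac{2n-1}{2}. \]
Exchanging $\OO$ and $\XX$ replaces $K$ by $-K$, whose oriented meridian is $-\mu$; thus $\PD[-\mu]=-\PD[\mu]$ and the class hom-dual to it is $-h$, so that
\[ \Af_{\XX,\OO}(\x)=-\tfrac12\langle c_1(\us_{\XX,\OO}(\x)),h\rangle-\tfrac{2n-1}{2}. \]
Because $J$ is affine over $H^2(Y,K;\Z)$ (that is, $J(\us+a)=J(\us)-a$), setting $D:=\us_{\OO,\XX}(\x)-\us_{\XX,\OO}(\x)\in H^2(Y,K;\Z)$ gives $J\us_{\OO,\XX}(\x)-J\us_{\XX,\OO}(\x)=-D$, hence $c_1(\us_{\OO,\XX}(\x))-c_1(\us_{\XX,\OO}(\x))=2D$. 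Adding the two displays reduces the lemma to the single identity $\langle D,h\rangle=n$; this also explains why the asserted equality holds for every generator $\x$, once one knows (below) that $D$ is independent of $\x$.

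\emph{Geometric identification of $D$.} The vector fields representing $\us_{\OO,\XX}(\x)$ and $\us_{\XX,\OO}(\x)$ are both built from $\nabla f$ by the identical modification near the flowlines $\gamma_{x_i}$ joining the index $1$ to the index $2$ critical points, and these lie in the complement of a tubular neighborhood $N(K)$. The constructions differ only in how the field is made nowhere vanishing along $K=\gamma_\OO\cup\gamma_\XX$ and then pushed to point outward along $\partial N(K)$: for $\us_{\OO,\XX}$ one makes $K$, with its given orientation, a closed orbit, while for $\us_{\XX,\OO}$ one makes $-K$ a closed orbit. Hence $D$ is supported in a collar of $\partial N(K)\cong T^2$, is independent of $\x$, and depends on the Heegaard data only through $n$; moreover $\langle D,h\rangle$ may be evaluated in the standard local model near $K$ (cf.\ Figure~2 of \cite{GT0512286}).

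\emph{Local computation.} Along $K$ the Morse function has $n$ index $0$ critical points and $n$ index $3$ critical points, joined alternately by the ascending arcs $\gamma_\XX$ and the descending arcs $\gamma_\OO$. Near each index $0$ critical point, the field for $\us_{\OO,\XX}$ is rotated so that the orbit enters along the $\gamma_\OO$ arc and exits along the $\gamma_\XX$ arc, whereas the field for $\us_{\XX,\OO}$ is rotated the opposite way; the two fields differ by a full relative rotation in the meridional direction, and after isotoping $\mu$ to encircle $K$ near that point this contributes $+1$ to $\langle D,h\rangle$ (the index $3$ critical points contribute identically, by the symmetry of the local picture). Summing over the $n$ critical points of a fixed index gives $\langle D,h\rangle=n$, as required.

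The delicate point is this last paragraph: fixing the sign of the local winding count, and checking that the construction of \cite{GT0512286} really produces the claimed model near each critical point. A safer alternative is to treat $n=1$ first --- there $\Af_{\XX,\OO}(\x)=-\Af_{\OO,\XX}(\x)$ is the orientation-reversal symmetry recorded in Section~8 of \cite{GT0512286} and Section~4.3 of \cite{GT0604360} --- and then to add a cancelling pair of basepoints one at a time, tracking the effect on $\Af_{\OO,\XX}$ and on the normalizing constant, each addition shifting the correction term by $-1$.
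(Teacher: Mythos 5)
Your reduction is exactly the one the paper makes: for a knot ($\ell=1$, $n_1=n$) one gets $\Af_{\OO,\XX}(\x)+\Af_{\XX,\OO}(\x)=\langle D,h\rangle-(2n-1)$ with $D=\us_{\OO,\XX}(\x)-\us_{\XX,\OO}(\x)$, so the lemma is equivalent to the single identity $D=n\PD[\mu]$, i.e.\ $\langle D,h\rangle=n$. Your algebra with $c_1(\us)=\us-J\us$, the anti-affinity of $J$, and the sign changes $\mu\mapsto-\mu$, $h\mapsto-h$ is correct and matches the paper's computation essentially line for line. The only real difference is how the key identity is established: the paper quotes it as the multi-basepoint version of the second half of Lemma~3.12 of \cite{GT0512286}, whereas you attempt to reprove it by a local analysis of the two vector fields near $K$.

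That local analysis is where the gap is. The knot passes through $2n$ critical points of $f$ ($n$ of index $0$ and $n$ of index $3$), and you assert both that the index-$3$ points ``contribute identically'' and that the answer is obtained by ``summing over the $n$ critical points of a fixed index''; taken together these give $2n$, not $n$, unless the index-$3$ contribution is in fact $0$ or each local contribution is $1/2$ --- and nothing in your argument decides which. You also leave the sign of the local winding unfixed, as you yourself note. Since the entire content of the lemma beyond bookkeeping is the coefficient $n$ (the $-(n-1)$ in the statement is precisely $n-(2n-1)$), an argument that cannot distinguish $n$ from $2n$ or $-n$ has not proved it. To close the gap, either invoke Lemma~3.12 of \cite{GT0512286} and check that its proof localizes near each $\OO$--$\XX$ pair so that the $n$ pairs contribute additively (this is what the paper does), or carry out in detail your fallback of starting from the once-pointed case and adding cancelling basepoint pairs one at a time; as written, neither is done.
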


\begin{proof}
Let $\mu$ be the oriented meridian for $K$, and $h$ the hom dual to $\PD[\mu]$, as before.  With the reversed orientation, $-K$ has oriented meridian $-\mu$ and $-h$ is hom dual to $\PD[-\mu]$.  

Accounting for multiple basepoints, the argument used to prove  the second half of Lemma~3.12 in \cite{GT0512286}  yields
\[\us_{\XX,\OO}(\x) = \us_{\OO,\XX}(\x) - n\PD[\mu] \]
 for all $\x \in \mathbb{T}_{\alpha} \cap \mathbb{T}_\beta$.  
 Then since 
 \[c_1\left(\us_{\OO,\XX}(\x) - n \PD[\mu]\right) = c_1(\us_{\OO,\XX}({\bf x})) - 2n \PD[\mu]\]
 we have
\begin{align*}
\Af_{\XX,\OO} 
	& = \left\langle \frac{c_1(\us_{\XX,\OO}(\x))- (2n-1)\PD[-\mu]}{2} , -h \right\rangle\\
	&= \left\langle \frac{c_1\left(\us_{\OO,\XX}(\x) - n \PD[\mu]\right)+ (2n-1)\PD[\mu]}{2} , -h \right\rangle\\
	&= \left\langle \frac{c_1(\us_{\OO,\XX}(\x)) -  2n \PD[\mu]+(2n-1)\PD[\mu]}{2} , -h \right\rangle\\
	&=- \left\langle \frac{\left(c_1(\us_{\OO,\XX}(\x)) -  (2n-1) \PD[\mu]\right)+ (2n-2)\PD[\mu]}{2} , h \right\rangle\\
	&=- \Af_{\OO,\XX}(\x) - (n-1)
\end{align*}
as desired.
\end{proof}

\section{Proof of Theorem~\ref{thm:main} }\label{section:GridDiagrams}
In this section we prove Theorem~\ref{thm:main}.  We begin by showing that any link $K$ (and hence any knot) in a lens space $L(p,q)$ admits a grid diagram $G_K$ and observe that a grid diagram $G_K$ for the pair $(L(p,q),K)$ is actually a multiply pointed Heegaard diagram compatible with $K$.

As such, a filtered chain complex $CF^-(L(p,q),K)$ for a knot $K$ in a lens space $L(p,q)$ is associated to the grid diagram by the Heegaard Floer machinery described in Section~\ref{section:HFBackground}.  Proposition~\ref{prop:Boundary} shows that  $(CF^-(L(p,q),K),\partial^-)$ is isomorphic to the complex $(C^-(G_K),\partial^-)$ described in Subsection~\ref{subsec:ChainCx} using arguments analogous to those in  \cite{GT0607691}  (i.e.\ the Heegaard diagram is ``nice'' in the sense of  \cite{GT0607777}).  The remainder of the section is devoted to  proving that  the combinatorial quantities $(\Sp,\M,\A)$ associated to $C^-(G_K)$ agree with the $\Spin^c$ ($\Sf$), homological ($\Mf$), and Alexander ($\Af$) gradings, respectively, on knot Floer homology.

\subsection{Grid diagrams}
Here we show that any link $K\subset L(p,q)$ possesses a grid diagram in the sense of Definition~\ref{def:TTGridDiag}.  To begin, let $V_\alpha \cup_\Sigma V_\beta$ be a genus $1$ Heegaard splitting of the lens space $L(p,q)$.   There is a height function $h \colon L(p,q) \to [-\infty, +\infty]$ for which
\begin{itemize}
\item $h^{-1}\{0\} = \Sigma$,
\item $h^{-1}[-\infty,0] = V_\alpha$ and $h^{-1}[0,+\infty] = V_\beta$, and 
\item $h^{-1}\{\pm \infty\}$ are the core curves of $V_\alpha$ and $V_\beta$, respectively.
\end{itemize}
We shall regard the solid torus $V_\alpha$ as lying below the Heegaard torus $\Sigma$ and $V_\beta$ as above. 

Let $K$ be a link in $L(p,q)$.  If $K$ is disjoint from $h^{-1}\{\pm \infty\}$, then $K$ is contained in $h^{-1}(-\infty,+\infty)$ which may be identified with $\Sigma \times (-\infty,+\infty)$ (where we equate the torus $\Sigma$ with $\Sigma \times \{0\}$).  If furthermore under the projection $\pi \colon \Sigma \times (-\infty,+\infty) \to \Sigma \times \{0\}$ the image of $K$ has at worst finitely many transverse double points, then we say $K$ is in {\em general position}.  The image $\pi(K)$ on $\Sigma$ together with over/under markings for the arcs through each double point is sufficient information to reconstruct $K$ up to isotopy; this information is known as a {\em diagram} of $K$.  By a slight isotopy, any knot or link may be put into general position and thus has a diagram.

As before, let $T^2$ be the oriented torus viewed as the quotient $\R^2 /\Z^2$ where $\Z^2$ is the standard lattice generated by the vectors $(1,0)$ and $(0,1)$.  Rule $T^2$ with {\em horizontal circles} of slope $0$ inherited from the horizontal lines in $\R^2$ and {\em slanted circles} of slope $-\frac{p}{q}$ inherited from the lines of slope $-\frac{p}{q}$ in $\R^2$.  We shall refer to $T^2$ as the {\em standard torus}.

Identify $\Sigma$ with $T^2$ so that the horizontal circles are meridians of $V_\alpha$ and the slanted circles are meridians of $V_\beta$.  Further identify these
 two solid tori each with a copy of $S^1 \times D^2$ so that the boundaries of the meridional disks $\{\mbox{pt}\} \times D^2$ are the horizontal and slanted circles  accordingly.
 
 \begin{definition}
 A link $K$ in a lens space $L(p,q)$ is in {\em grid position} if
(i) each component of $K$ is comprised of arcs, each properly embedded in a meridional disk $\{\mbox{pt}\} \times D^2$ of alternately $V_\alpha$ and $V_\beta$, and (ii) no two arcs of $K$ are contained in the same meridional disk.  If $K$ is decomposed in this way into $n$ arcs in $V_\alpha$ and $n$ arcs in $V_\beta$, then $n$ is the {\em grid number}.
\end{definition}

We shall say an immersed $1$--manifold in $\Sigma$ is {\em rectilinear} if
it is composed of a finite number of alternately horizontal and slanted
segments and no singularity occurs at an end point of a segment.  A {\em
segment} is an arc of a horizontal or slanted circle.  Here, and in the
definition of grid position, we permit degenerate arcs that are just
single points and those whose two end points coincide, forming an entire
circle.  We relax the definition of rectilinear so that a singularity of
the immersed $1$--manifold may occur at the end point of a degenerate
segment.

It is clear, then, that any link $K \subset L(p,q)$ in grid position gives rise to a twisted toroidal grid diagram $G_K = (T^2, \vec{\alpha}, \vec{\beta}, \OO, \XX)$ for $K$ in the sense of Definition~\ref{def:TTGridDiag} and vice versa.  In fact, via the identification of the Heegaard torus $\Sigma$ with $T^2$, $G_K$ is a $2n$--basepointed Heegaard diagram compatible with $K$, in the sense of Definition~\ref{def:CompHeegDiag}.

\begin{lemma}\label{lem:rectilinear}
Any diagram $D(K)$ of a link $K$ in $L(p,q)$ is isotopic to one that is rectilinear with respect to the ruling, and such that every undercrossing is horizontal.
\end{lemma}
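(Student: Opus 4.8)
The plan is to convert an arbitrary diagram $D(K)$ into rectilinear form by approximating its underlying immersed multicurve $\pi(K) \subset \Sigma = T^2$ with a staircase of alternating horizontal and slanted segments, and then to arrange the crossing information so that every undercrossing is horizontal. I would work entirely on the Heegaard torus $T^2 = \R^2/\Z^2$ with its two rulings (horizontal circles of slope $0$, slanted circles of slope $-p/q$); since $\gcd(p,q)=1$ these two slopes are independent and every point of $T^2$ lies on a unique horizontal circle and a unique slanted circle, so the two rulings together give (away from a measure-zero set) a well-defined ``lattice'' of corner points where a horizontal segment can be joined to a slanted one.

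First I would put $D(K)$ in general position (finitely many transverse double points, all of which we may assume occur away from any prescribed finite set of circles) and subdivide $\pi(K)$ into finitely many short embedded arcs. For each short arc, after an ambient isotopy of $\Sigma$ supported near that arc, I replace it by a rectilinear arc with the same endpoints: concretely, cover the arc by finitely many small coordinate disks in which the horizontal and slanted rulings look like two transverse linear foliations, and within each disk replace the (nearly straight) arc of $\pi(K)$ by one or two segments — a horizontal piece followed by a slanted piece — joining the entry and exit points. Choosing the disks small enough keeps the new curve in a small neighborhood of the old one, hence isotopic to it in $\Sigma\times(-\infty,\infty)$ and so representing the same link $K$; choosing the corner points generically keeps the resulting immersed $1$--manifold in general position, i.e.\ no new singularities at segment endpoints. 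Allowing degenerate (point or full-circle) segments as in the relaxed definition of rectilinear handles the edge cases where an arc is tangent to a ruling or wraps all the way around.

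Next I would fix the crossings. At each double point of the now-rectilinear diagram, two segments cross; by a further small isotopy of $\Sigma$ I may assume the two strands at every crossing are one horizontal and one slanted segment (if both were, say, horizontal, perturb one strand near the crossing to introduce a tiny slanted jog, creating no new crossings). Now at such a crossing exactly one strand is horizontal. If the horizontal strand is currently the overcrossing, I perform a ``finger move'' on the horizontal strand: push a small portion of it down through $\Sigma$ into $V_\alpha$ and back, dragging it under the slanted strand, so that the same pair of strands now crosses with the horizontal strand underneath and with no other crossings affected. (Equivalently: in the height function $h$, locally lower the horizontal arc below the slanted arc near the crossing.) Since each such move is a local isotopy of $K$ in $L(p,q)$ and changes only the over/under data at that one crossing, finitely many of them make every undercrossing horizontal while preserving rectilinearity.

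The main obstacle I anticipate is the bookkeeping in the approximation step: one must verify that the piecewise horizontal/slanted replacement can always be carried out so that (a) the corners and crossings land in generic position — in particular no corner of a segment coincides with a crossing and no three segments are concurrent — and (b) the crossing-fixing finger moves in the last step do not disturb the rectilinearity established earlier or create uncontrolled new crossings. Both are routine once the local model (two transverse linear foliations of a disk, and a nearly-straight arc crossing it) is set up carefully, and genericity is achieved by a standard small-perturbation argument; but it is where the real content of the lemma lies, so I would present that local replacement model explicitly and then assemble the global isotopy from finitely many such local moves.
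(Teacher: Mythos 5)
Your first step---the staircase approximation of the arcs of $\pi(K)$ by alternately horizontal and slanted segments, carried out in small disks and kept in general position by a small perturbation---is sound and is essentially the approximation used in the paper. The problem is your final step. A move that ``changes only the over/under data at that one crossing'' while leaving all other crossings untouched is a crossing change, not an isotopy: if the horizontal strand lies above the slanted strand at a double point of the projection, any path carrying it below must either pass through the slanted strand (not an isotopy) or go around it, and going around necessarily alters the projection---a finger pushed under a strand introduces two new double points and leaves the original crossing intact, so you do not end up with the diagram you claim. Locally, the over-arc and the under-arc are not isotopic rel boundary in the complement of the other strand, and globally a crossing change can alter the isotopy class of $K$, so this step cannot be made to work as described.

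The paper avoids this by reversing the order of operations: \emph{before} rectilinearizing, it isotopes the diagram in a small neighborhood $N_c$ of each crossing $c$ so that the under arc is horizontal and the over arc is slanted---this is just a rotation of the local picture and never touches the over/under data---and only then replaces the arcs of $D(K) - \bigcup \Int N_c$ by rectilinear approximations whose end segments match the horizontal/slanted directions already fixed at the crossings. Your argument can be repaired in the same spirit without reordering: at a rectilinear crossing where the horizontal strand happens to be the overstrand, do not change the crossing; instead insert a tiny slanted jog into the overstrand and a tiny horizontal jog into the understrand near the double point (the same local move you already use to separate two coincident horizontal strands), so that the segment actually passing under is horizontal. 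Either way, the direction of each strand at a crossing must be chosen to fit the over/under data, not the other way around.
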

\begin{proof}
Arrange by an isotopy of the diagram $D(K)$ that in a small neighborhood $N_c$ of each crossing $c$, the under arc is horizontal and the over arc is slanted.  Outside these neighborhoods, the diagram is a collection of disjoint embedded arcs $\mathcal{A} = D(K) - \bigcup \Int N_c$.  Each arc $a \in \mathcal{A}$ is arbitrarily close ---and hence isotopic rel--$\bdry$--- to a finite polygonal arc $a'$ with the same endpoints which is composed of alternately horizontal and slanted segments.
Moreover, we may assume that each such arc $a'$ meets the horizontal arc of an under crossing with a slanted arc and the slanted arc of an over crossing with a horizontal arc.  After isotoping these arcs $a \in \mathcal{A}$ to their rectilinear approximations $a'$, $D(K)$ is rectilinear and every under crossing is horizontal.
\end{proof}

\begin{proposition} \label{prop:existence}
Each link $K \subset L(p,q)$ is described by a grid diagram.
\end{proposition}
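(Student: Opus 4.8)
The plan is to show that every link $K \subset L(p,q)$ can be isotoped into grid position, which by the remarks immediately preceding the statement is equivalent to producing a grid diagram $G_K$. First I would use Lemma~\ref{lem:rectilinear} to replace $K$ by a diagram $D(K)$ that is rectilinear with respect to the horizontal/slanted ruling of $\Sigma = T^2$ and has every undercrossing horizontal. The idea is then to realize this rectilinear diagram as the projection of a link in grid position, pushing horizontal segments down into $V_\alpha$ and slanted segments up into $V_\beta$ exactly as in the construction of $K$ from a grid diagram in Section~\ref{section:CombSummary}. Since every undercrossing is horizontal and every overcrossing slanted, the pushed-off link is embedded: at each crossing the horizontal strand sits in $V_\alpha$ (below) and the slanted strand in $V_\beta$ (above), so no self-intersections remain.

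Next I would address the two genericity conditions in the definition of grid position: that each arc lie in a \emph{distinct} meridional disk $\{\mathrm{pt}\}\times D^2$ of $V_\alpha$ or $V_\beta$, and that the $O$ and $X$ basepoints (the corners where horizontal meets slanted) satisfy the ``no two in the same row or column'' condition. A priori the rectilinear diagram produced by Lemma~\ref{lem:rectilinear} may have several horizontal segments on the same horizontal circle (or several corner points in the same row/column). To fix this, I would perturb: each horizontal segment can be isotoped, within $V_\alpha$ and rel its endpoints on $\Sigma$, to lie in a nearby parallel meridional disk (i.e.\ a slightly different horizontal circle), and similarly for slanted segments in $V_\beta$; finitely many segments so only finitely many perturbations are needed, and they can be chosen to make all the governing horizontal coordinates of horizontal segments and all the slanted coordinates of slanted segments pairwise distinct. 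After this perturbation the corner points — which become the $O$'s and $X$'s — occupy distinct rows and distinct columns, and $K$ is decomposed into $n$ arcs in $V_\alpha$ and $n$ arcs in $V_\beta$ for some $n$, i.e.\ $K$ is in grid position with grid number $n$. Finally, invoking the observation in the text that grid position for $K$ is the same data as a twisted toroidal grid diagram $G_K$ in the sense of Definition~\ref{def:TTGridDiag}, we obtain the desired grid diagram, which (again as noted in the text) is simultaneously a $2n$-basepointed Heegaard diagram compatible with $K$.

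I would handle at the end the degenerate cases already flagged in the paper: a component of $K$ lying entirely in $\Sigma$, or meeting $h^{-1}\{\pm\infty\}$, or a diagram with no crossings. For the first two, a preliminary small isotopy (pushing off the core curves, tilting into general position) reduces to the generic case handled above; for a component with no crossings one allows the degenerate segments permitted in the definitions of grid position and rectilinearity, forming a trivial unknotted component as in the footnote to the knot-from-grid-diagram construction. The main obstacle I expect is purely bookkeeping: verifying that the perturbation making the rows and columns distinct can be carried out without reintroducing crossings of the wrong type or creating new self-intersections of the pushed-off link — this is intuitively clear because the perturbations happen in disjoint collar neighborhoods of $\Sigma$ inside $V_\alpha$ and $V_\beta$ respectively, but it is the step that requires the most care to state precisely. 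Everything else is a direct translation between the ``rectilinear diagram'' picture and the ``meridional-disk decomposition'' picture, both of which are spelled out in the excerpt.
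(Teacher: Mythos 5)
Your proposal is correct and follows essentially the same route as the paper: apply Lemma~\ref{lem:rectilinear} to get a rectilinear diagram with horizontal undercrossings, then isotope segments so that no two horizontal segments share a horizontal circle and no two slanted segments share a slanted circle, at which point the diagram is a grid diagram. The extra care you take with the push-off into $V_\alpha$ and $V_\beta$ and with the degenerate cases is consistent with (and slightly more explicit than) the paper's treatment.
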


\begin{proof}
Let $K$ be a link in the lens space $L(p,q)$.  Let $D(K)$ be a diagram of $K$ on a Heegaard torus $\Sigma$.  By the above Lemma~\ref{lem:rectilinear}, we may assume $D(K)$ is rectilinear and every under crossing is horizontal.
If two or more horizontal segments of $D(K)$ occur on the same horizontal circle, then within a small open neighborhood of this circle meeting no other horizontal segments we may isotope each horizontal segment along the slanted ruling to lie within its own horizontal circle.  This does not effect which slanted loops the slanted segments lie upon, though it may alter their lengths.  Similarly, we can arrange that no two slanted segments of $D(K)$ lie on the same loop.
Having performed such isotopies as needed, $D(K)$ is now a grid diagram for $K$.
\end{proof}

\subsection{Identification of $(CF^-({G_K}),\partial^-)$ with $(CF^-(L(p,q),K),\partial^-)$ }
We now identify the combinatorial chain complex for a knot $K$ in a lens space $L(p,q)$ described in Subsection~\ref{subsec:ChainCx} with the knot Floer homology chain complex associated to the Heegaard diagram of $G_K$.
\begin{proof}[Proof of Proposition~\ref{prop:Boundary}]
As observed in the preceding subsection, a grid diagram for $K\subset L(p,q)$ is actually a compatible, $2n$--pointed Heegaard diagram for $K$, and hence defines a chain complex  $(CF^-(L(p,q),K),\partial^-)$.  In fact, one sees immediately that this Heegaard diagram is admissible (a technical requirement for Heegaard Floer homology of $\Q HS^3$'s with multiple basepoints) by an argument analogous to that given in the paragraph following Definition 2.2 in \cite{GT0607691}.  The identification between $\x\in \mathcal{G}$ and $\x\in \mathbb{T}_{\alpha}\cap \mathbb{T}_{\beta}$ is immediate, and hence the underlying (ungraded) $\Z_2[U_0,\ldots U_{n-1}]$ modules associated to $CF^-({G_K})$ and $CF^-(L(p,q),K)$ are isomorphic.

 Moreover, the Heegaard diagram is ``nice'' in the sense of \cite{GT0607777}.   In fact, every region of $T^2-\vec{\alpha}-\vec{\beta}$ is a quadrilateral, and thus the full chain complex $(CF^-(L(p,q),K),\partial^-)$  is combinatorially computable.  According to Theorem~3.2 of \cite{GT0607777}, the boundary operator  counts only Whitney disks whose domains are embedded $4$--sided  polygons (i.e.\ parallelograms) of Maslov index $1$. The condition on the Maslov index forces the parallelograms to be admissible, showing that the boundary operator for $(CF^-(L(p,q),K),\partial^-)$ is given by

$$\partial^-({\bf x}) = \sum_{{\bf y} \in \mathcal{G}}\sum_{\begin{subarray}{c}
                                                       \phi \in \PG({\bf x},{\bf y})\\
                                                      n_{\bf x}(\phi) = n_{\bf y}(\phi) = 0\\                                
						       \end{subarray}} U_0^{n_{O_0}(\phi)} \cdots U_{n-1}^{n_{O_{n-1}}(\phi)} {\bf y},$$

\noindent as desired.  
\end{proof}

\subsection{Identification of the $\Spin^c$, Maslov, and Alexander gradings with $\Sp,\M,\A$}
We now show that the combinatorial quantities $\Sp,\M,$ and $\A$ defined on $C^-(G_K)$ agree with the gradings it inherits as a chain complex for the knot Floer homology of the knot $K\subset L(p,q)$. 

\subsubsection{$\Spin^c$ gradings and the Proof of Proposition~\ref{prop:SpincGrad}.}
We first handle the $\Spin^c$ grading.  As noted in Section~\ref{section:HFBackground}, the filtered chain complex for $CF^-(Y,K)$ for a knot $K$ in a rational homology sphere $Y$ splits according to elements of $\Spin^c(Y)$, which is an affine set for an action of $H_1(Y;\mathbb{Z})\cong \Z_p$.

In \cite{MR1957829}, \ons construct an affine identification\footnote{Both sets admit an action by $H_1(L(p,q);\Z)$ which, in the case of $\Z_p$, comes from an implicit isomorphism $H_1(Y;\mathbb{Z})\cong \Z_p$ induced from the Heegaard diagram.  Note that while this identification at first sight appears to be solely in terms of a specific Heegaard diagram for $L(p,q)$, it has a geometric interpretation in terms of the Chern classes of $\Spin^c$ structures over four-dimensional two-handle cobordisms between lens spaces.}
$$ \phi: \Spin^c(L(p,q);\Z)\rightarrow \Z_p.$$

For the standard singly-pointed genus $1$ Heegaard diagram for $L(p,q)$, we have $$\phi(\mathfrak{s}_\OO(\x_\OO)) =q-1, $$ where $\mathfrak{s}_\OO(\x_\OO)$ is the $\Spin^c$ structure corresponding to the intersection point located in the lower left-hand corner of the region containing $\OO$ (there, $w$).  

Recall from Subsection~\ref{subsec:GradDef}, the map 
$$\mathfrak{s}_{\mathbb{O}}: \mathbb{T}_{\alpha} \cap \mathbb{T}_{\beta}=\mathcal{G} \rightarrow \Spin^c(Y).$$
Composing $\mathfrak{s}_{\mathbb{O}}$ with $\phi$, and considering the Heegaard diagram associated to a grid diagram $G_K$, we obtain a map:
 $$\Sf=\phi \circ \mathfrak{s}_{\OO}\colon \mathcal{G} \rightarrow \Z_p.$$ 

We wish to show that $\Sf(\x)=\Sp(\x)$ for all $\x\in \mathcal{G}$, where $\Sp$ is the combinatorial quantity defined in Subsection~\ref{subsubsec:SpinCGrad}.  The first step is to show that they agree for a specific element ${\bf x}_{\OO}\in \mathcal{G}$.

\begin{lemma} \label{lemma:SpincCanGen}
Let ${\bf x}_{\OO}$ be the generator whose components lie in the lower left hand corners of the regions in $G_K$ containing the $\OO$ basepoints.  Then $${\Sf}({\bf x}_{\OO}) = q-1.$$
\end{lemma}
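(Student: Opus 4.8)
The plan is to reduce the multiply--pointed grid diagram $G_K$ to the standard singly--pointed genus--one Heegaard diagram for $L(p,q)$ by a sequence of destabilizations, while tracking the effect on the $\Spin^c$ structure associated to the canonical generator $\x_\OO$. Recall that $\Sf(\x_\OO) = \phi\circ\s_\OO(\x_\OO)$, and that $\phi$ is invariant under Heegaard moves in the sense that the geometric $\Spin^c$ structure $\s_\OO(\x_\OO)\in\Spin^c(L(p,q))$ is defined independently of the choice of compatible diagram (it depends only on the underlying vector field). Since $G_K$ has grid number $n$, we must destabilize $n-1$ times; each destabilization cancels one $\alpha$ curve, one $\beta$ curve, and one $O$ basepoint, arriving at the standard diagram in which, by the result of \ons quoted just above the lemma, the analogous canonical generator has $\phi$--value $q-1$.

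First I would make explicit which Heegaard moves accomplish the destabilization. In the grid diagram, the components of $\x_\OO$ sit in the lower--left corners of the parallelogram regions containing the $O$'s. Choosing one such region $R$ containing $O_i$, one can isotope $\alpha_i$ across $R$ so that it meets $\beta_{\sigma_\OO(i)}$ in exactly one point (a ``free'' intersection point), possibly after a finger--move/handleslide to clear away the other intersection points and basepoints from the relevant bigon; this is the standard destabilization move for nice diagrams, and $O_i$ is the basepoint that gets absorbed. The key bookkeeping point is that the canonical generator of the destabilized diagram is again the ``lower--left corners of $O$--regions'' generator, so the construction is self--similar under destabilization; hence it suffices to verify that a single destabilization leaves $\s_\OO(\x_\OO)$ unchanged.

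The second, and main, step is to show $\s_\OO$ of the canonical generator is preserved under this destabilization. Here I would invoke the vector--field description of $\s_\OO$ from Subsection~\ref{subsec:GradDef}: $\s_\OO(\x)$ is the homology class of a nowhere--vanishing vector field built from $\nabla f$, modified near the flowlines $\gamma_{x_i}$ and near $\gamma_\OO$. A destabilization corresponds to cancelling an index--$1$/index--$2$ pair (the critical points giving rise to $\alpha_i$ and $\beta_{\sigma_\OO(i)}$) together with a nearby index--$0$/index--$3$ pair, and one checks that the component $x_i$ of $\x_\OO$ lies on exactly the cancelling flowline, so that the modification of $\nabla f$ in the two pictures agrees away from a ball; invariance of $\s_\OO$ then follows from the general fact (from \cite{MR2065507, GT0504404}) that $\s_\OO$ is a diffeomorphism invariant of the pointed knot, independent of the compatible multi--pointed diagram. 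The only genuine subtlety is verifying that the canonical generator really is carried to the canonical generator under the isotopy realizing the destabilization --- i.e. that after clearing the bigon, the new lower--left--corner generator of the $n-1$ remaining $O$--regions is precisely the restriction of $\x_\OO$; this is a direct but slightly fiddly check on the combinatorics of the preferred fundamental domain, and I expect it to be the main obstacle. Once this is done, induction on $n$ reduces to the $n=1$ case, which is exactly the statement $\phi(\s_\OO(\x_\OO))=q-1$ recalled from \cite{MR1957829}, completing the proof.
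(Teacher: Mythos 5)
Your overall strategy --- reduce $G_K$ to the standard singly--pointed genus--one diagram, where \cite{MR1957829} gives the answer $q-1$ --- is the same one the paper pursues, but the mechanism you propose for the reduction does not work. You want to isotope $\alpha_i$ so that it meets $\beta_{\sigma_{\OO}(i)}$ in a single ``free'' intersection point and then destabilize. This is impossible when $p>1$: in the twisted toroidal grid diagram $\alpha_i$ represents the class $(1,0)$ and $\beta_j$ the class $\pm(q,-p)$ in $H_1(T^2;\Z)$, so their algebraic intersection number is $\pm p$, and no isotopy or finger move clearing bigons can reduce the geometric intersection number below $p$. Equivalently, cancelling an index--$1$/index--$2$ pair is precisely the move that lowers the genus of the Heegaard surface, and $L(p,q)$ with $p>1$ admits no genus--zero splitting. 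The move that actually removes a basepoint keeps the surface a torus: it cancels an index--$0$ critical point against an index--$1$ critical point (deleting an $\alpha$ curve and merging two rows) and an index--$2$ against an index--$3$ (deleting a $\beta$ curve and merging two columns). Your pairing ``index--$1$/index--$2$ together with a nearby index--$0$/index--$3$ pair'' is not a Morse cancellation at all, since cancelling critical points requires their indices to differ by one. Moreover, to put the diagram in a position where such a reduction is visible one must first handleslide the mutually parallel $\beta$ curves over one another so that $n-1$ of them become null--homotopic circles, each encircling a single $O_i$; these handleslides are not optional clean--up but the essential step.

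That is exactly where the paper starts, and it then sidesteps the ``fiddly check'' you flag as the main obstacle: rather than destabilizing and re--identifying a canonical generator inductively, it forms the Heegaard triple $(\vec{\alpha},\vec{\beta},\vec{\gamma})$ (with $\vec{\gamma}$ the handleslid $\beta$ curves), whose associated cobordism is $L(p,q)\times[0,1]$ minus a neighborhood of $V_\beta\times\{\tfrac{1}{2}\}$; a Whitney triangle connecting ${\bf x}_{\OO}$, the canonical generator of $\T_\beta\cap\T_\gamma$, and a generator $\y\in\T_\alpha\cap\T_\gamma$ then forces $\s_\OO({\bf x}_{\OO})=\s_\OO(\y)$, and the handleslid diagram is visibly an $(n-1)$--fold stabilization of the standard diagram, where $\Sf(\y)=q-1$ is read off from the conventions of \cite{MR1957829}. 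Your vector--field argument for invariance of $\s_\OO$ could plausibly be adapted to the correct $0$--$1$ and $2$--$3$ cancellations, but as written the proposal rests on a destabilization that cannot be performed, so the induction never gets off the ground.
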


\begin{proof}
Take a fundamental domain for $G_K$  such that one of the $\OO$ basepoints is in the lower left-hand corner of the grid.\footnote{This choice has no effect on the computation of the absolute Maslov grading, as we will see; it is made only so that we can easily describe the procedure.}  If we forget about $\mathbb{X}$, we are left with an $n$--pointed Heegaard diagram for $L(p,q)$, where $n$ is the grid number of $G_K$.

Beginning with the $\beta$ circle to the right of this basepoint, let us now handleslide each of the $\beta$ circles over the one immediately to its right.  Do this until the $\beta$ circles on the diagram consist of a single curve of slope $-\frac{p}{q}$ and $n-1$ null-homotopic circles enclosing all but the left-most $O_i$.  See Figure~\ref{fig:CanGen}.

\begin{figure}
\begin{center}
\input{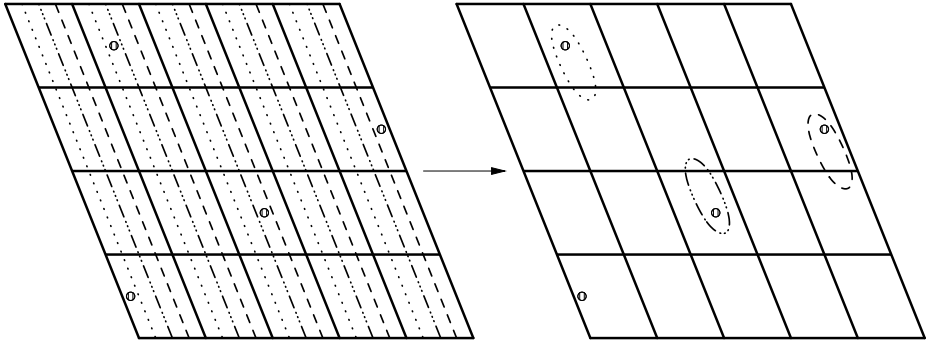}
\end{center}
\caption{A $4$--pointed Heegaard diagram for $L(5,2)$, before and after performing handleslides.}
\label{fig:CanGen}
\end{figure}

Now consider the Heegaard triple diagram pictured in Figure~\ref{fig:TripDiag} with $\mathbb{T}_{\alpha}$ specified by the original $\alpha$ curves, $\mathbb{T}_{\beta}$ specified by the original $\beta$ curves, and $\mathbb{T}_{\gamma}$ specified by the $\beta$ curves after we have performed the above sequence of handleslides.

\begin{figure}
\begin{center}
\input{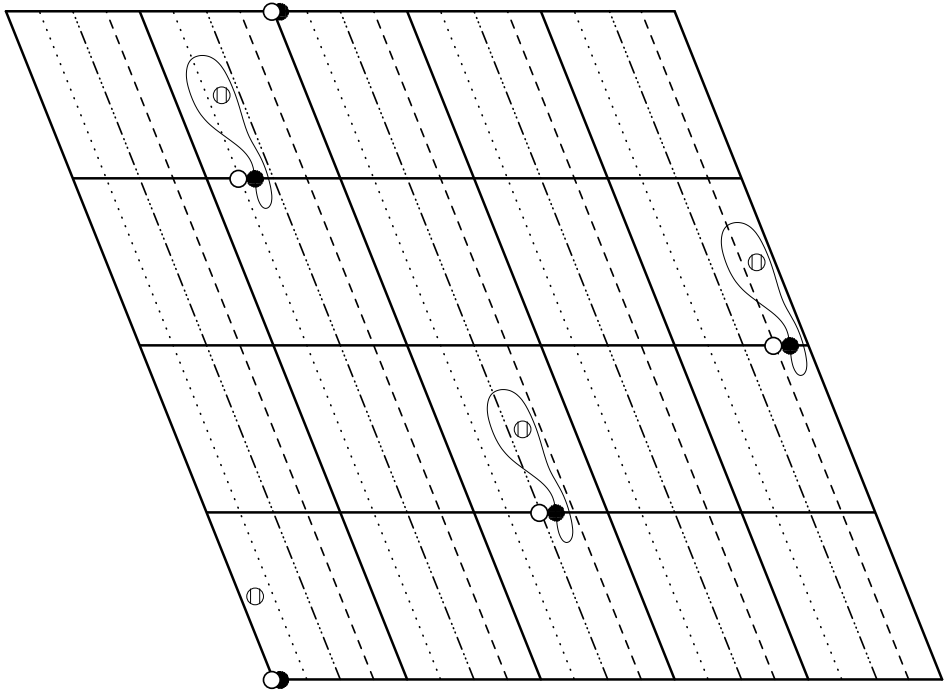}
\end{center}
\caption{A Heegaard triple diagram showing a Maslov index $0$, $\Spin^c$ structure-preserving, triangle between the generator in $\mathbb{T}_\alpha \cap \mathbb{T}_\beta$ represented by the white circles and the generator in $\mathbb{T}_\alpha \cap \mathbb{T}_\gamma$ represented by the black circles.}
\label{fig:TripDiag}
\end{figure}

\begin{figure}
\begin{center}
\input{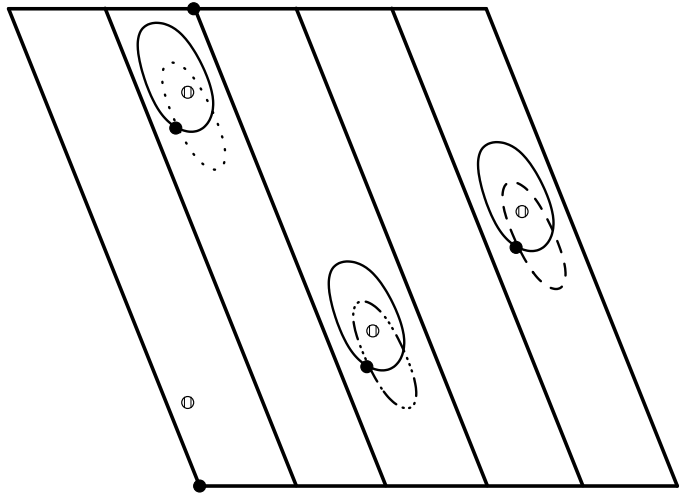}
\end{center}
\caption{A generator in a $3$--times stabilized Heegaard diagram for $L(5,2)$.  It is the lowest Maslov index generator in $CF^-(L(p,q);\mathfrak{s}_{q-1})$ (using notation from \cite{MR1957829}), hence has absolute Maslov grading $d(L(p,q),\s_{q-1}) - (n-1)$.}
\label{fig:LowGen}
\end{figure}

Let $\y$ be the generator pictured in Figure~\ref{fig:TripDiag} in $\mathbb{T}_{\alpha} \cap \mathbb{T}_{\gamma}$ by black circles.   We claim that $${\Sf}(\y)={\Sf}({\bf x}_{\OO}).$$  This follows from the existence of a Whitney triangle, $\psi$ connecting ${\bf x}_{\OO}$, $\y$, and a canonical generator $\Omega \in \mathbb{T}_{\beta} \cap \mathbb{T}_{\gamma}$.  Indeed, the four-dimensional cobordism, $W$, corresponding to the triple of curves $(\vec{\alpha},\vec{\beta},\vec{\gamma})$ is easily seen to be $L(p,q)\times [0,1]$ minus a regular neighborhood of the solid torus $V_\beta \times \frac{1}{2}$ (see Example ~8.1 of \cite{MR2113019}). It follows that the restriction of the $\Spin^c$ structure associated to $\psi$ to the $L(p,q)$ boundary components of $W$ must agree.  These restrictions, in turn, are $\mathfrak{s}_\OO(\y)$ and  $\mathfrak{s}_\OO({\bf x}_{\OO})$, respectively, proving the claim.

   On the other hand, ${\Sf}(\y)=q-1$,  according to the labeling convention specified in Section~4.1 of \cite{MR1957829}. Indeed,  $\y$ is the same generator pictured in Figure~\ref{fig:LowGen}, for which this last observation is obvious.
\end{proof}

\begin{lemma} \label{lemma:RelSpinc}  Let ${\bf x}_1=(\sigma_1,(b_0, \ldots, b_{n-1})), {\bf x}_2 = (\sigma_2, (c_0, \ldots, c_{n-1}))\in \mathcal{G}$.  Then
$${\Sf}({\bf x}_2) - {\Sf}({\bf x}_1) = \sum_{i=0}^{n-1} c_i - \sum_{i=0}^{n-1} b_i.$$  
\end{lemma}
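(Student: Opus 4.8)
The plan is to identify $\Sf({\bf x}_2) - \Sf({\bf x}_1)$ with a difference class in $H_1(L(p,q);\Z)$ and then evaluate that class by an explicit winding--number count on the grid. Recall the standard fact that, for any two generators ${\bf x}_1, {\bf x}_2 \in \T_\alpha \cap \T_\beta$, one has
\[\s_\OO({\bf x}_2) - \s_\OO({\bf x}_1) = \PD\bigl[\epsilon({\bf x}_1,{\bf x}_2)\bigr] \in H^2(L(p,q);\Z),\]
where $\epsilon({\bf x}_1,{\bf x}_2) \in H_1(L(p,q);\Z)$ is represented by the closed $1$--cycle $a + b$, with $a$ a $1$--chain carried by $\vec\alpha$ and $b$ a $1$--chain carried by $\vec\beta$ satisfying $\partial a = {\bf x}_2 - {\bf x}_1$ and $\partial b = {\bf x}_1 - {\bf x}_2$ as $0$--chains; such chains exist precisely because $\sigma_1$ and $\sigma_2$ are bijections, so the restriction of ${\bf x}_2 - {\bf x}_1$ to each $\alpha_i$ and to each $\beta_j$ has total multiplicity zero. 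Since $\Sf = \phi \circ \s_\OO$ and $\phi$ is affine over the $H_1(L(p,q);\Z) \cong \Z_p$ action (the isomorphism being the one induced by the Heegaard diagram), it suffices to compute $\epsilon({\bf x}_1,{\bf x}_2)$ and to identify it, under the linear part of $\phi$, with $\sum_i c_i - \sum_i b_i \in \Z_p$.

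First I would observe that this computation localizes to the $\vec\beta$--part of the cycle. Working in the preferred fundamental domain of Figure~\ref{fig:twistedgrid}, we have $H_1(T^2;\Z) = \Z\langle e_1, e_2\rangle$ with $[\alpha_i] = e_1$ and $[\beta_j] = q e_1 - p e_2$ up to orientation, so in $H_1(L(p,q);\Z) = \Z^2/\langle e_1,\, q e_1 - p e_2\rangle$ the class $e_1$ dies while $e_2$ generates $\Z/p$; hence only the $e_2$--coordinate of $[a+b]$, reduced modulo $p$, matters. Because each $\alpha_i$ is a horizontal circle, every lift of an arc of $a$ to $\R^2$ has zero vertical displacement, so the $e_2$--coordinate of $[a+b]$ equals the total vertical displacement accumulated along the arcs making up $b$. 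Writing $b = \sum_j b_j$, where $b_j$ runs along $\beta_j$ from the ${\bf x}_2$--component on $\beta_j$ (the $c_u$--th point of $\alpha_u \cap \beta_j$, $u = \sigma_2^{-1}(j)$) to the ${\bf x}_1$--component on $\beta_j$ (the $b_v$--th point of $\alpha_v \cap \beta_j$, $v = \sigma_1^{-1}(j)$), I would compute the vertical displacement of each $b_j$ modulo $p$ from the combinatorial data. The key input is a description of the $p$ parallel strands of $\beta_j$ in the fundamental domain: travelling once around $\beta_j$ in the upward direction shifts the strand index by $-q \bmod p$, the $k$--th point of $\alpha_u \cap \beta_j$ lies on the strand determined by $k$ together with a ``floor'' correction depending only on $j, q, u$, and moving $b_j$ across the rows $\alpha_0, \dots, \alpha_{n-1}$ accounts for the rest. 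Summing over $j = 0, \dots, n-1$ and using that $\sigma_1$ and $\sigma_2$ are permutations, the floor corrections and the row--crossing counts telescope, and after applying the linear part of $\phi$ (whose normalization is fixed in \cite[\S4.1]{MR1957829}) one is left with exactly $\sum_i c_i - \sum_i b_i$ in $\Z_p$; this can be double-checked against the standard grid number one diagram, or for consistency with Lemma~\ref{lemma:SpincCanGen}.

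I expect the main obstacle to be precisely this last step: the winding--number bookkeeping in the slanted fundamental domain, in particular reindexing the $p$ points of each $\alpha_u \cap \beta_j$ by the strand of $\beta_j$ on which they lie, correctly tracking the vertical winding picked up as $b_j$ crosses the rows, and verifying that the auxiliary contributions (the floor corrections, the row--crossing counts, and the strand--shift factor $q$) combine to cancel upon summation over $j$ and after normalization by $\phi$. An equivalent but possibly cleaner organization of the same argument is to connect ${\bf x}_1$ to ${\bf x}_2$ by a sequence of elementary moves --- ``index moves'' shifting a single index by $\pm 1$ along one $\alpha_i \cap \beta_{\sigma(i)}$, each contributing $\pm$ the generator of $H_1(L(p,q);\Z)$ to $\epsilon$, and ``parallelogram moves'' realized by a (possibly inadmissible) quadrilateral, each contributing $0$ to $\epsilon$ since the corresponding $\pi_2$ is nonempty --- and then to check that the net change in $\Sf$ equals the net change in $\sum_i b_i$ modulo $p$; this still reduces to the same local computation, now carried out for a single parallelogram move.
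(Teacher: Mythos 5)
Your framework is the same as the paper's: both invoke Lemma~2.19 of \cite{MR2113019} to identify $\Sf(\x_2)-\Sf(\x_1)$ with the class of the cycle $\epsilon$ built from $\alpha$-arcs and $\beta$-arcs joining the two generators. Where you diverge is in how that class is evaluated, and this is where the gap lies. You compute the class as the $e_2$-coordinate (total vertical winding) of $\epsilon$ in $H_1(T^2)$ reduced mod $p$, which localizes the count to the $\beta$-arcs; the resulting strand-by-strand bookkeeping in the slanted fundamental domain is exactly the content of the lemma, and you leave it as an assertion that ``the floor corrections and the row-crossing counts telescope.'' That assertion is not verified, and it is not even literally true as stated: already for grid number one, travelling along $\beta_0$ advances the label by $q$ per unit of vertical displacement, so the vertical winding of $\epsilon(\x_1,\x_2)$ is $q^{-1}(c_0-b_0) \bmod p$, not $c_0-b_0$. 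To recover the stated formula you must use the identification $H_1(L(p,q);\Z)\cong\Z_p$ normalized by intersection with a $\beta$ curve (equivalently, multiply the $e_2$-coordinate by $\pm q$), and matching that normalization against the $\phi$ of \cite{MR1957829} is precisely the delicate point your sketch defers. A reader cannot check from what you have written that the unit factor of $q$ comes out right.

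The paper evaluates the same class the dual way: the element of $H_1(L(p,q);\Z)\cong\Z_p$ represented by $\epsilon$ is its mod-$p$ intersection number with a single curve $\beta_0$. A small horizontal push-off of the $\beta$-arcs stays parallel to, hence disjoint from, $\beta_0$, so only the horizontal $\alpha$-arcs contribute; and the labelling of the $p$ points of $\alpha_i\cap\beta_j$ was chosen exactly so that the arc of $\alpha_i$ from its $b_i$-th point on $\beta_{\sigma_1(i)}$ to its $c_i$-th point on $\beta_{\sigma_2(i)}$ meets $\beta_0$ in $c_i-b_i$ points mod $p$. Summing over $i$ gives the formula with no floor corrections, no telescoping, and no hidden factor of $q$. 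I would encourage you to replace your winding computation with this intersection-number evaluation (or, if you keep your route, to carry out the strand bookkeeping explicitly and exhibit the factor of $q$ being absorbed by the normalization of $\phi$); your closing ``elementary moves'' reorganization still reduces to the same unproven local identity, so it does not close the gap by itself.
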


\begin{proof}
Lemma~2.19 of \cite{MR2113019} indicates that 
\[{\Sf}({\bf x}_2) - {\Sf}({\bf x}_1)= \mathfrak{s}_{\mathbb{O}}({\bf x}_2) - \mathfrak{s}_{\mathbb{O}}({\bf x}_1) \in H_1(L(p,q))\cong \Z_p\]
 is represented by any cycle $\epsilon({\bf x}_2,{\bf x}_1)$ obtained by connecting ${\bf x}_2$ to ${\bf x}_1$ along the $\alpha$ curves and ${\bf x}_1$ to ${\bf x}_2$ along the $\beta$ curves. This number, in turn,  is the mod $p$ intersection of (a small transverse push off of) $\epsilon({\bf x}_2,{\bf x}_1)$ with any of the $\beta$ curves, say $\beta_0$.

Furthermore, transverse intersections with $\beta_0$ of a small horizontal push-off of $\epsilon({\bf x}_2,{\bf x}_1)$ occur only along the horizontal (i.e.\ $\alpha$) pieces of $\epsilon({\bf x}_2, {\bf x}_1)$.  Along a single $\alpha$ curve, the given labeling was chosen to count the number of intersections (mod $p$) of an arc of the $\alpha$ curve with $\beta_0$.

Therefore, the total number of intersections of $\epsilon({\bf x}_2,{\bf x}_1)$ with $\beta_0$ is 
$$\sum_{i=0}^{n-1} c_i - \sum_{i=0}^{n-1} b_i,$$ 
as desired.
\end{proof}

\begin{proof}[Proof of Proposition~\ref{prop:SpincGrad}]
It remains to show that $\Sp(\x) = {\Sf}(\x)$ for each generator $\x$.

With the definition of $\Sp$ preceding Proposition~\ref{prop:SpincGrad} observe that, given any 
\[{\bf x}_1=(\sigma_1,(b_0, \ldots, b_{n-1})), {\bf x}_2 = (\sigma_2, (c_0, \ldots, c_{n-1}))\in \mathcal{G},\]
  Lemma~\ref{lemma:RelSpinc} implies that $\Sp$ agrees with ${\Sf}$ up to an overall shift:
\[\Sp(\x_2) -  \Sp(\x_1) = \sum_{i=0}^{n-1} c_i - \sum_{i=0}^{n-1} b_i ={\Sf}(\x_2) - {\Sf}(\x_1).\]
Lemma~\ref{lemma:SpincCanGen} however shows that $\Sp$ agrees with ${\Sf}$ on the generator $\x_\OO$:
\[\Sp(\x_\OO) = q-1 = {\Sf}(\x_\OO).\]
Hence the two gradings agree for every generator.
\end{proof}

\subsubsection{Maslov gradings and the proof of Proposition~\ref{prop:MaslovGrad}.}
\begin{proof}[Proof of Proposition~\ref{prop:MaslovGrad}]
We now wish to show that the combinatorial quantity $\M$ agrees with the grading on knot Floer homology, $\gr$, defined in Subsection~\ref{subsec:GradDef}.
Our strategy will be to construct a Heegaard diagram compatible with the lift, $\widetilde{K}$, of $K$ in the universal cover of $L(p,q)$.  One easily constructs such a Heegaard diagram which, in fact, is a grid diagram for $\widetilde{K}$ in $S^3$ in the traditional sense \cite{GT0607691}.  A simple formula for the Maslov grading \cite{GT0610559} in the cover, coupled with the relative Maslov index formula of \cite{GT0608001} and a calculation of the absolute Maslov grading for a single generator completes the proof.

\begin{lemma}\label{lemma:CoverPicture} \cite{GT0701460} Let $T$ be a twisted toroidal grid diagram for $K$ in $L(p,q)$.  Form the universal cover 
$\mathbb{R}^2$ of $T$, identifying $T$ with the fundamental domain
$$[0,1] \times [0,1] \subset \mathbb{R}^2$$
of the covering space action.  Let $Z$ be the lattice generated by the vectors $(1,0)$ and $(0,p)$.  Then 
$$\widetilde{T} = \mathbb{R}^2/Z$$
 is a Heegaard diagram compatible with $\widetilde{K} \subset S^3$, where 
$\widetilde{K}$ is the preimage of $K$ under the covering space 
projection $\pi: S^3 \rightarrow L(p,q)$.\end{lemma}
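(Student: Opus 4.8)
The plan is to realize $\widetilde{T}$ as the preimage $\pi^{-1}(\Sigma)$ of the Heegaard torus $\Sigma=T^2$ underlying $G_K$, under the universal covering $\pi\colon S^3\to L(p,q)$, and to check that the genus one Heegaard splitting $L(p,q)=V_\alpha\cup_\Sigma V_\beta$ lifts to a genus one Heegaard splitting of $S^3$ whose induced data is $\widetilde{T}$ together with the preimages of $\vec{\alpha}$, $\vec{\beta}$, $\vec{\mathbb{O}}$, $\vec{\mathbb{X}}$. First I would pin down the lattice. Since $H_1(L(p,q);\Z)\cong\Z_p$ is generated by the core of $V_\alpha$, i.e.\ by the image of the vertical class $(0,1)$ on $T^2=\R^2/\Z^2$, the inclusion $\Sigma\hookrightarrow L(p,q)$ induces a surjection $\Z^2=\pi_1(\Sigma)\to\Z_p$ killing the meridian $\mu_\alpha=(1,0)$ and sending $(0,1)$ to a generator, with kernel exactly $Z=\langle(1,0),(0,p)\rangle$; equivalently, a meridian disk of $V_\alpha$ must lift to a compressing disk, which forces the cover of $T^2$ to be trivial in the $(1,0)$ direction and $p$--fold in the complementary direction. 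Hence $\pi^{-1}(\Sigma)$ is the connected cover of $T^2$ corresponding to $Z$, namely $\R^2/Z=\widetilde{T}$. In $\widetilde{T}$ the preimage of each horizontal circle is $p$ disjoint horizontal circles, so $\pi^{-1}(\vec{\alpha})$ consists of $np$ horizontal circles; and since $\gcd(p,q)=1$, the preimage of each slope $-\frac{p}{q}$ line is $p$ disjoint embedded circles, so $\pi^{-1}(\vec{\beta})$ consists of $np$ slanted circles, with every component of each preimage mapping homeomorphically onto its image.

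Next I would show that the two handlebodies lift to solid tori. The inclusion $V_\alpha\hookrightarrow L(p,q)$ is $\pi_1$--surjective because the core of $V_\alpha$ generates $H_1(L(p,q))$, and $V_\beta\hookrightarrow L(p,q)$ is $\pi_1$--surjective because its core likewise generates $H_1(L(p,q))$ (being a unit multiple of the core of $V_\alpha$ in homology, as $\gcd(p,q)=1$). Therefore $\pi^{-1}(V_\alpha)$ and $\pi^{-1}(V_\beta)$ are connected; each, being a connected $p$--fold cyclic cover of a solid torus, is again a solid torus, and the meridian disks of $V_\alpha$ (resp.\ $V_\beta$) lift to $p$ disjoint meridian disks of $\pi^{-1}(V_\alpha)$ (resp.\ $\pi^{-1}(V_\beta)$). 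So the $np$ circles of $\pi^{-1}(\vec{\alpha})$ bound disjoint disks in $\pi^{-1}(V_\alpha)$, the $np$ circles of $\pi^{-1}(\vec{\beta})$ bound disjoint disks in $\pi^{-1}(V_\beta)$, and $S^3=\pi^{-1}(V_\alpha)\cup_{\widetilde{T}}\pi^{-1}(V_\beta)$ is a genus one Heegaard splitting — the glued manifold is $S^3$ by simple connectivity of the universal cover, and can also be checked directly since, in the basis $\{(1,0),(0,p)\}$ of $Z$, the two meridian curves have geometric intersection number $\left|\det\begin{pmatrix}1&0\\q&-1\end{pmatrix}\right|=1$. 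I expect this step, the lifting of the splitting, to be the only one requiring care: it rests entirely on the $\pi_1$--surjectivity of the handlebody inclusions, which is precisely where $\gcd(p,q)=1$ and $H_1(L(p,q))\cong\Z_p$ enter; the remaining verifications are bookkeeping.

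Finally, for the basepoints and compatibility: each row (resp.\ column) of $\widetilde{T}$ maps homeomorphically onto a row (resp.\ column) of $G_K$, so the requirement that no two $O$'s (resp.\ $X$'s) of $G_K$ share a row or column passes to $\pi^{-1}(\vec{\mathbb{O}})$ and $\pi^{-1}(\vec{\mathbb{X}})$; hence $(\widetilde{T},\pi^{-1}(\vec{\alpha}),\pi^{-1}(\vec{\beta}),\pi^{-1}(\vec{\mathbb{O}}),\pi^{-1}(\vec{\mathbb{X}}))$ satisfies Definition~\ref{def:TTGridDiag} with grid number $np$ (and, after the linear change of coordinates carrying $Z$ to $\Z^2$ and the slanted direction to the vertical, is a grid diagram for $S^3$ in the traditional sense). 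Since $K$ is in grid position in $L(p,q)$ — a union of arcs properly embedded in meridian disks of $V_\alpha$ and $V_\beta$, no two in a common disk — its preimage $\widetilde{K}=\pi^{-1}(K)$ is a union of arcs properly embedded in meridian disks of $\pi^{-1}(V_\alpha)$ and $\pi^{-1}(V_\beta)$, no two in a common disk; that is, $\widetilde{K}$ is in grid position in $S^3$ with associated diagram $\widetilde{T}$. As in the discussion following Proposition~\ref{prop:existence}, it then follows that $\widetilde{T}$ is a $2np$--pointed Heegaard diagram compatible with $\widetilde{K}\subset S^3$. Since this is essentially the content of \cite{GT0701460}, I would cite that reference for the remaining details.
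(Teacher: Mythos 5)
Your argument is correct, and it is worth noting that the paper itself gives no proof of this lemma at all: it is stated with a citation to \cite{GT0701460} and the surrounding text merely illustrates the lift in Figure~\ref{fig:GenLift}. So there is nothing in the paper to compare against step by step; what you have written is a self-contained verification of the claim the paper takes on faith. The substance of your proof is in the right place. The lattice computation is the key point: the kernel of $\pi_1(T^2)\cong\Z^2\to\pi_1(L(p,q))\cong\Z_p$ contains $(1,0)$ (the $\alpha$-meridian bounds in $V_\alpha$) and has index $p$, hence equals $\langle(1,0),(0,p)\rangle$, which identifies $\pi^{-1}(\Sigma)$ with $\R^2/Z$. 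The $\pi_1$-surjectivity of both handlebody inclusions (using $\gcd(p,q)=1$ for $V_\beta$) gives connectedness of their preimages, and a connected finite cyclic cover of a solid torus is a solid torus, so the splitting lifts to a genus one splitting of $S^3$; the determinant computation confirming that the lifted meridians meet once is a good sanity check. The bookkeeping for rows, columns, and basepoints is also right: since $(1,0)$ and $(q,-p)$ both lie in $Z$, each row and column lifts to $p$ homeomorphic copies, so the ``no two basepoints in a common row or column'' condition of Definition~\ref{def:TTGridDiag} persists upstairs, and grid position for $K$ lifts to grid position for $\widetilde K$ with grid number $np$. The one thing your write-up does not record, and which the paper uses immediately after the lemma, is the component count: when $K$ is a knot of order $k$ in $H_1(L(p,q);\Z)$, the lift $\widetilde K$ is a link of $p/k$ components (each component of $\widetilde K$ covers $K$ with degree $k$). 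That is not part of the lemma statement, so its omission is not a gap, but it would be natural to include it alongside your covering-space analysis since it follows from the same $\pi_1$ considerations.
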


Figure~\ref{fig:GenLift} illustrates a grid diagram for $\widetilde{K}$ in $S^3$ obtained from a grid diagram of $K$ in $L(p,q)$. 
Note that when $K$ is a knot, $\widetilde{K}$ will be a link of $\ell = \frac{p}{k}$ components, where $k$ is the order of $[K]$ as an element in $H_1(L(p,q);\Z)$.

The chain complex $CF^-(S^3,\widetilde{K})$ is generated by the points in $\mathbb{T}_{\widetilde{\alpha}} \cap \mathbb{T}_{\widetilde{\beta}}$, where $\widetilde{\alpha}$ and $\widetilde{\beta}$ are the lifts in the Heegaard diagram for $(S^3,\widetilde{K})$ of the $\alpha$ and $\beta$ curves for $(L(p,q),K)$.  To calculate the relative Maslov gradings between generators ${\bf x}, {\bf y} \in \mathbb{T}_{\alpha} \cap \mathbb{T}_{\beta}$, we use the natural map $$\mathbb{T}_{\alpha} \cap \mathbb{T}_{\beta} \rightarrow \mathbb{T}_{\widetilde{\alpha}} \cap \mathbb{T}_{\widetilde{\beta}}$$ which sends a generator ${\bf x} \in \mathbb{T}_{\alpha} \cap \mathbb{T}_{\beta}$ to $\widetilde{\bf x} = ({\bf x},\ldots,{\bf x}) \in \mathbb{T}_{\widetilde{\alpha}} \cap \mathbb{T}_{\widetilde{\beta}}$, the collection of its preimages, also depicted in Figure~\ref{fig:GenLift}.

\begin{figure}
\begin{center}
\input{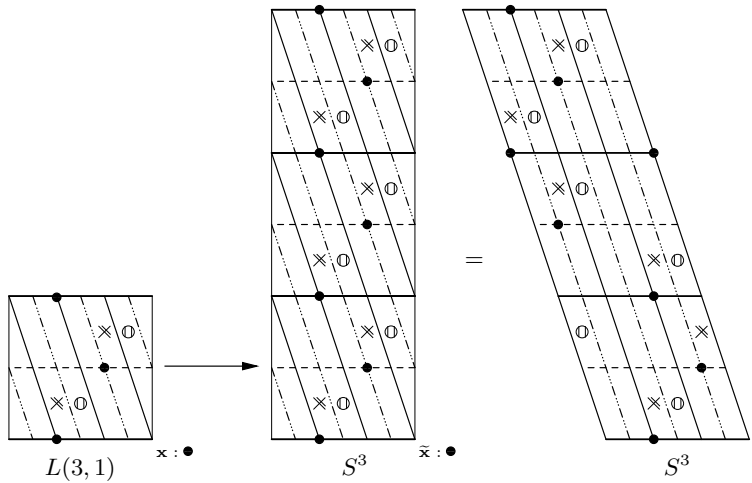}
\end{center}
\caption{A grid number $2$ diagram of a knot $K$ with a chain complex generator ${\bf x}$ in $L(3,1)$ and their lifts $\widetilde{K}$ and $\widetilde{\bf x}$ to a grid number $6$ diagram in the universal cover $S^3$.}
\label{fig:GenLift}
\end{figure}

By \cite{GT0608001}, the Maslov grading differences between ${\bf x}, {\bf y} \in \mathcal{G}$ and $\widetilde{\bf x}, \widetilde{\bf y} \in \widetilde{\mathcal{G}}$ satisfy the relationship:  

\begin{equation} \label{eqn:LipLee}
\Mf(\x) - \Mf(\y) = \frac{1}{p}\left(\Mf(\widetilde{\x}) - \Mf(\widetilde{\y})\right).
\end{equation}

Furthermore, in \cite{GT0610559}, the authors provide a simple formula for the Maslov index of a generator in a toroidal grid diagram $G_K$ for a pair $(S^3,K)$ of a link $K$ in $S^3$.  In particular, given a point $a$ in a fundamental domain representing $G_K$, let $\pi_x(a)$ denote its $x$ (horizontal) coordinate and $\pi_y(a)$ denote its $y$ (vertical) coordinate.  They then define a function $\mathcal{I}(\,\cdot\, ,\, \cdot \, )$ whose input is two collections, $A = (a_1, \ldots, a_r)$ and $B=(b_1,\ldots,b_s)$ of finitely many coordinate pairs in $G_K \subset \mathbb{R}^2$, the chosen fundamental domain representing the Heegaard torus.  This function assigns to the pair $(A,B)$ the number of pairs $a \in A, b \in B$ satisfying $\pi_x(a) < \pi_x(b)$ and $\pi_y(a) < \pi_y(b)$.  They go on to show that for a generator ${\bf x} \in CF^-(S^3,K)$, 
$${\Mf}({\bf x}) = \mathcal{I}({\bf x},{\bf x}) - \mathcal{I}({\bf x},\mathbb{O}) - \mathcal{I}(\mathbb{O},{\bf x}) + \mathcal{I}(\mathbb{O},\mathbb{O}) + 1.$$  
In particular, ${\Mf}$ is independent of the chosen fundamental domain representing $G_K$.\footnote{We emphasize that the given formula requires a linear identification of $G_K$ with a fundamental domain on $\R^2$ with the property that $\alpha$ circles correspond to slope $0$ lines and $\beta$ circles to slope $\infty$ lines.  This is equivalent to what we have done: i.e., chosen coordinates on the fundamental domain $$\left\{(x,y) \in \R^2\,\,|\,\, 0 \leq y <p, -\frac{q}{p}y \leq x < -\frac{q}{p}y + 1\right\}$$ with respect to the basis $\vec{v}_1 = \left(\frac{1}{np},0\right), \vec{v}_2 = \left(-\frac{q}{np},\frac{1}{n}\right).$}
As in \cite{GT0610559}, we will find it convenient to use a bilinear extension of a symmetrized version of the $\mathcal{I}$ function:

$$\mathcal{J}(A,B) := \frac{1}{2}(\mathcal{I}(A,B) + \mathcal{I}(B,A)).$$

This allows the Maslov grading of a generator in an $S^3$ grid diagram to be expressed more succinctly as:

$$\Mf(\x) = \mathcal{J}(\x - \OO,\x-\OO) + 1.$$

\begin{lemma}  \label{lemma:AbsMasGr} Let $G_K$ be a grid number $n$ grid diagram for $K \subset L(p,q)$, and let ${\bf x}_{\mathbb{O}}$ denote the generator corresponding to the lower left corner of the $\mathbb{O}$ basepoints.  Then $${\Mf}({\bf x}_{\mathbb{O}}) = d(p,q,q-1) - (n-1).$$ 
\end{lemma}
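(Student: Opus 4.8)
\emph{Plan.} The idea is to identify ${\bf x}_\OO$, up to Maslov grading, with the ``lowest'' generator of the standard genus--one Heegaard diagram for $L(p,q)$, and then to invoke the correction--term computation of \ons in \cite{MR1957829}. First I would forget the $\XX$ basepoints, so that $G_K$ becomes an $n$--pointed Heegaard diagram for $L(p,q)$, and recall from Lemma~\ref{lemma:SpincCanGen} that ${\bf x}_\OO$ lies in the $\Spin^c$ structure with $\Sf = q-1$; this already fixes which correction term can appear.

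Next I would run the handleslides of Lemma~\ref{lemma:SpincCanGen}: after sliding the $\vec{\beta}$ curves, the diagram $(\Sigma,\vec{\alpha},\vec{\gamma},\OO)$ becomes the standard genus--one diagram for $L(p,q)$ subjected to $n-1$ index--zero/index--three (free) stabilizations, one about each of $O_1,\dots,O_{n-1}$ --- this is the diagram of Figures~\ref{fig:TripDiag} and~\ref{fig:LowGen} --- and ${\bf x}_\OO$ is carried to the generator ${\bf y}$ there. The Whitney triangle $\psi$ of Figure~\ref{fig:TripDiag} connecting ${\bf x}_\OO$, the canonical top generator $\Omega\in\mathbb{T}_\beta\cap\mathbb{T}_\gamma$, and ${\bf y}$ has $\mu(\psi)=0$ and $n_\OO(\psi)=0$; since the handleslide triangle map preserves the absolute $\Q$--grading (concretely, the grading--shift term attached to this cobordism vanishes), this forces $\Mf({\bf x}_\OO)=\Mf({\bf y})$.

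It then remains to evaluate $\Mf({\bf y})$ in the stabilized standard diagram. Without the stabilizations, the standard genus--one diagram has a single generator in the $\Spin^c$ structure $\s_{q-1}$, whose absolute Maslov grading is the correction term $d(L(p,q),\s_{q-1})=d(p,q,q-1)$ by \cite{MR1957829} (in the orientation convention of Remark~\ref{remark:orientation}). Each free stabilization introduces a canceling pair of intersection points differing in Maslov grading by exactly $1$ --- apply the second property characterizing $\Mf$ to the Maslov--index--one bigon between them that avoids the new basepoint --- and ${\bf y}$ is the generator that uses the lower point of all $n-1$ such pairs. Hence $\Mf({\bf y})=d(p,q,q-1)-(n-1)$, and combining with the previous paragraph gives $\Mf({\bf x}_\OO)=d(p,q,q-1)-(n-1)$.

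\emph{Main obstacle.} The delicate point is the middle step: one must verify that the triple diagram of Figure~\ref{fig:TripDiag} is admissible and that the associated cobordism $W$ contributes no grading shift --- equivalently that $c_1(\t)^2-2\chi(W)-3\sigma(W)=0$ on the relevant $\Spin^c$ structure --- and that, after the $\vec{\beta}$--handleslides, the diagram genuinely is the $(n-1)$--times free--stabilized standard diagram with ${\bf x}_\OO\leftrightarrow {\bf y}$. A variant that sidesteps the stabilization bookkeeping is to compare ${\bf y}$ directly, via the second property characterizing $\Mf$, with the generator obtained by moving each of the $n-1$ ``stabilization'' components to its other intersection point, whose grading is that of the un--stabilized generator; this trades the stabilization count for an explicit count of a Whitney disk supported in the stabilization regions. (Note that the universal--cover formula $\eqref{eqn:LipLee}$ and the $S^3$ Maslov formula of \cite{GT0610559} are of no help here, as they only determine the grading up to an overall additive constant, which is precisely what this lemma supplies.)
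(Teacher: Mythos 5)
Your proposal follows essentially the same route as the paper's proof: connect ${\bf x}_\OO$ to the generator ${\bf y}$ of the handleslid diagram via the Maslov index $0$ triangle from the proof of Lemma~\ref{lemma:SpincCanGen}, then identify ${\bf y}$ as the lowest generator in the $\Spin^c$ structure $\s_{q-1}$ of the $(n-1)$--times stabilized standard genus--one diagram, whose grading is $d(p,q,q-1)-(n-1)$. The extra details you supply --- the vanishing of the cobordism grading--shift term and the bigon argument giving the drop of $1$ per stabilization --- are exactly the points the paper delegates to \cite{MR2222356} and to the discussion in Section~2 of \cite{GT0607691}.
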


Here $d(p,q,q-1)$ denotes the correction term $d(-L(p,q),q-1)$ as defined inductively in \cite{MR1957829}.  See Remark~\ref{remark:orientation}.

\begin{proof}
As in \cite{GT0607691}, we compute the absolute Maslov grading of the given generator by connecting it, via a Maslov index $0$ triangle in a Heegaard triple diagram, to a generator whose absolute Maslov grading we know.  See Section~8 of \cite{MR2113019} for the definition of a Heegaard triple diagram and Section~7 of \cite{MR2222356} for details on computing absolute Maslov gradings.

To do this, we use the same procedure as in the proof of Lemma~\ref{lemma:SpincCanGen}, noting that all triangles involved have Maslov index $0$.  See Figures~\ref{fig:CanGen},~\ref{fig:TripDiag},~\ref{fig:LowGen}.   It follows that  the generator pictured in Figure~\ref{fig:TripDiag} in $\mathbb{T}_{\alpha} \cap \mathbb{T}_{\gamma}$ by black circles has the same absolute Maslov grading as the generator pictured in Figure~\ref{fig:LowGen}.   This latter generator, in turn, has absolute Maslov grading $d(p,q,q-1)-(n-1)$, since it is the lowest generator in the $\Spin^c$ structure with $\Spin^c$ grading $q-1$ for an $n-1$ times stabilized Heegaard diagram.  See also the discussion in Section 2 of \cite{GT0607691}.  
\end{proof}

\begin{lemma} \label{lemma:AbsMasGrUp} Let $G_K$ be a grid number $n$ grid diagram for $K \subset L(p,q)$ and $G_{\widetilde{K}}$ the associated grid diagram for $\widetilde{K} \subset S^3$.  Then 
$${\Mf}(\widetilde{\bf x}_{{\OO}}) = -(pn-1).$$
\end{lemma}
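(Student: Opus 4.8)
The plan is to read off the answer directly from the explicit lift supplied by Lemma~\ref{lemma:CoverPicture}, combined with the closed formula for the Maslov grading recalled just above. By Lemma~\ref{lemma:CoverPicture}, $G_{\widetilde K}$ is an ordinary toroidal grid diagram of grid number $pn$ for the link $\widetilde K\subset S^3$ in the sense of \cite{GT0607691,GT0610559}, and the $pn$ basepoints $\widetilde{\OO}$ of $G_{\widetilde K}$ are precisely the $\pi$--preimages of the $n$ basepoints $\OO$ of $G_K$. Consequently $\widetilde{\x}_\OO=(\x_\OO,\dots,\x_\OO)$ is the generator of $G_{\widetilde K}$ whose $pn$ components are the lower-left corners of the squares containing the $pn$ points of $\widetilde{\OO}$. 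Thus the claim reduces to the following general fact: in any $m\times m$ toroidal grid diagram for a link in $S^3$, the generator $\x_\OO$ consisting of the lower-left corners of the $\OO$-squares satisfies $\Mf(\x_\OO)=-(m-1)$; we apply it with $m=pn$.

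To prove this, I would normalize coordinates on the preferred fundamental domain so that $G_{\widetilde K}$ becomes the standard $m\times m$ grid: the $\alpha$-circles are the lines $y=0,\dots,m-1$, the $\beta$-circles the lines $x=0,\dots,m-1$, the points of $\widetilde{\OO}$ sit at the centers $(i+\tfrac12,\rho(i)+\tfrac12)$ for $i=0,\dots,m-1$ and some permutation $\rho$ of $\{0,\dots,m-1\}$, and $\widetilde{\x}_\OO$ has components at the lattice points $(i,\rho(i))$. This re-identification is harmless because, as noted in the footnote following the Maslov formula, that formula is unchanged by any linear identification of the fundamental domain carrying $\alpha$-circles to horizontal lines and $\beta$-circles to vertical lines.

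Now apply $\Mf(\widetilde{\x}_\OO)=\mathcal{I}(\widetilde{\x}_\OO,\widetilde{\x}_\OO)-\mathcal{I}(\widetilde{\x}_\OO,\widetilde{\OO})-\mathcal{I}(\widetilde{\OO},\widetilde{\x}_\OO)+\mathcal{I}(\widetilde{\OO},\widetilde{\OO})+1$ and compute the four counts. Writing $N=\#\{(i,j):i<j,\ \rho(i)<\rho(j)\}$ for the number of co-inversions of $\rho$, unwinding the strict inequalities in the definition of $\mathcal{I}$ gives $\mathcal{I}(\widetilde{\x}_\OO,\widetilde{\x}_\OO)=N$, $\mathcal{I}(\widetilde{\OO},\widetilde{\x}_\OO)=N$, $\mathcal{I}(\widetilde{\OO},\widetilde{\OO})=N$, and $\mathcal{I}(\widetilde{\x}_\OO,\widetilde{\OO})=m+N$, the extra $m$ coming from the $m$ ``diagonal'' pairs in which a component $(i,\rho(i))$ of $\widetilde{\x}_\OO$ lies weakly to the lower-left of the center of the $\OO$ in its own row and column. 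Hence $\Mf(\widetilde{\x}_\OO)=N-(m+N)-N+N+1=-(m-1)=-(pn-1)$.

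There is no essential difficulty here; the only point requiring care is the bookkeeping of strict versus non-strict inequalities in $\mathcal{I}$ when one argument lies on the integer lattice and the other at the half-integer centers --- in particular the $i=i'$ boundary case, which is exactly what produces the lone summand $m$ in $\mathcal{I}(\widetilde{\x}_\OO,\widetilde{\OO})$ while contributing nothing to the other three terms. (As a sanity check, the same computation reproves that in an $m\times m$ grid diagram for the unknot $\x_\OO$ represents the bottom generator of the grid homology, consistent with $\widehat{HF}(S^3)$ being supported in Maslov grading $0$ together with $m-1$ stabilizations.)
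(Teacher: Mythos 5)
Your proposal is correct and follows essentially the same route as the paper: both identify $\widetilde{\x}_{\OO}$ with the generator $\x_{\widetilde{\OO}}$ sitting at the lower-left corners of the $\widetilde{\OO}$-squares in a grid number $pn$ toroidal grid diagram for $S^3$, and then invoke the fact that such a generator has Maslov grading $-(N-1)$ for grid number $N$. The only difference is that the paper cites this last computation (Lemma~6.3 of \cite{GT0611841}, Lemma~3.2 of \cite{GT0607691}) whereas you carry out the four $\mathcal{I}$-counts explicitly --- and your bookkeeping, including the extra $m$ in $\mathcal{I}(\widetilde{\x}_\OO,\widetilde{\OO})$ from the diagonal pairs, is right.
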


\begin{proof}
$\widetilde{\bf x}_{\OO}$ is ${\bf x}_{\widetilde{\OO}}$, the generator in the lower-left hand corner of $\widetilde{\OO}$.  Thus one easily checks, by a calculation analogous to that detailed in the proof of Lemma 6.3 of \cite{GT0611841}, that ${\Mf}(\widetilde{\bf x}_{{\OO}}) = -(N-1)$, where $N$ is the grid number (see also Lemma~3.2 in \cite{GT0607691}).  In this case, $N=pn$.
\end{proof}

Thus, we arrive at Equation~\ref{equation:AbsGradShift}:
 
\begin{corollary} \label{cor:CompMas}  Let ${\bf x} \in \mathcal{G}$ be a generator associated to $G_K$ and $\widetilde{\bf x} \in \widetilde{\mathcal{G}}$ its lift in $G_{\widetilde{K}}$.  Then 
\begin{equation}\label{equation:AbsGradShift}
{\Mf}({\bf x}) = \frac{1}{p}{\Mf}(\widetilde{\bf x}) + \left(d(p,q,q-1) + \frac{p-1}{p}\right).
\end{equation}
\end{corollary}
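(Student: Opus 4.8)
The plan is to obtain Corollary~\ref{cor:CompMas} as a purely formal consequence of Equation~\ref{eqn:LipLee} together with the two absolute computations in Lemmas~\ref{lemma:AbsMasGr} and~\ref{lemma:AbsMasGrUp}: all of the genuinely geometric input has already been assembled, and what remains is bookkeeping. First I would specialize Equation~\ref{eqn:LipLee} to the pair $(\x,\x_{\OO})$, where $\x_{\OO}\in\mathcal{G}$ is the generator sitting in the lower-left corners of the regions of $G_K$ containing the $\OO$ basepoints. This yields
$$\Mf(\x)-\Mf(\x_{\OO}) = \frac{1}{p}\left(\Mf(\widetilde{\x})-\Mf(\widetilde{\x}_{\OO})\right),$$
once we observe that the lift of $\x_{\OO}$ is exactly the canonical generator $\widetilde{\x}_{\OO}=\x_{\widetilde{\OO}}$ of the lifted grid diagram $G_{\widetilde{K}}$, i.e.\ the generator occupying the lower-left corners of the regions of $\widetilde{\OO}$. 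This identification is immediate from the description of the cover in Lemma~\ref{lemma:CoverPicture} (a lift of a lower-left corner of an $\OO$-region is a lower-left corner of an $\widetilde{\OO}$-region, and the map $\x\mapsto\widetilde{\x}$ records all preimages at once); it is precisely the generator whose absolute grading is computed in Lemma~\ref{lemma:AbsMasGrUp}.

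Next I would substitute the two base-point values. Lemma~\ref{lemma:AbsMasGr} gives $\Mf(\x_{\OO}) = d(p,q,q-1)-(n-1)$, and Lemma~\ref{lemma:AbsMasGrUp} gives $\Mf(\widetilde{\x}_{\OO}) = -(pn-1)$. Plugging these into the displayed relation and simplifying,
$$\Mf(\x) = \frac{1}{p}\Mf(\widetilde{\x}) + d(p,q,q-1) - (n-1) + \frac{pn-1}{p} = \frac{1}{p}\Mf(\widetilde{\x}) + d(p,q,q-1) + \frac{p-1}{p},$$
using $-(n-1)+\frac{pn-1}{p} = -n+1+n-\frac{1}{p} = \frac{p-1}{p}$. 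This is exactly Equation~\ref{equation:AbsGradShift}. Note in particular that the grid number $n$ cancels, as it must, since the left-hand side does not depend on the particular grid presentation.

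The only real (and minor) obstacle is the compatibility check in the first step: that Equation~\ref{eqn:LipLee}, derived above for arbitrary pairs of generators together with their simultaneous lifts, applies verbatim to $\x_{\OO}$ and $\widetilde{\x}_{\OO}$, and that this lift is indeed the generator whose absolute Maslov grading Lemma~\ref{lemma:AbsMasGrUp} pins down. I would also remark that no choice of fundamental domain enters: on the $S^3$ side the quantity $\Mf$ is fundamental-domain independent by the formula of \cite{GT0610559} quoted above, and on the $L(p,q)$ side the gradings are intrinsic. Granting the compatibility statement, the corollary is pure arithmetic, and the proof is complete.
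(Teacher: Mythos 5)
Your proposal is correct and coincides with the paper's argument: the paper likewise verifies Equation~\ref{equation:AbsGradShift} for the base generator ${\bf x}_{\OO}$ via Lemmas~\ref{lemma:AbsMasGr} and~\ref{lemma:AbsMasGrUp} and then propagates it to all generators using Equation~\ref{eqn:LipLee}. Your arithmetic and the identification of $\widetilde{\bf x}_{\OO}$ with ${\bf x}_{\widetilde{\OO}}$ are exactly the steps the paper leaves implicit.
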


\begin{proof}
One easily checks using Lemmas~\ref{lemma:AbsMasGr} and~\ref{lemma:AbsMasGrUp} that this formula holds for ${\bf x}_{\mathbb{O}}$.  The conclusion then follows from Equation~\ref{eqn:LipLee}.
\end{proof}

It is straightforward to verify that the function $\widetilde{W}$, defined in the discussion leading up to the statement of Proposition~\ref{prop:MaslovGrad}, sends a point in $G_K$ to the $p$--tuple of its preimages in $G_{\widetilde{K}}$. Then from Corollary~\ref{cor:CompMas} it follows that $\M(\x)=\gr(\x)$ for all $\x\in \mathcal{G}$, proving  Proposition~\ref{prop:MaslovGrad}.  Note that since $\gr = \M$ is a homological grading on $CF^-(L(p,q),K)$, it follows that $\M(\partial \x)=\M(\x)-1$.
\end{proof}
 
 \subsubsection{Alexander gradings and the proof of Proposition~\ref{prop:AlexGrad}.}
We conclude by identifying $\A$, as defined in Section~\ref{sec:combAlexGrad}, with the rational Alexander grading $\Af$.
\begin{proof}[Proof of Proposition~\ref{prop:AlexGrad}]
By Proposition~\ref{prop:MaslovGrad}, we have $\M=\gr$.  Therefore, it will be sufficient to show that the definition of the Alexander grading $\Af$ given in Subsection~\ref{subsec:GradDef} satisfies the stated relationship. That is, we must show
\begin{lemma}\label{lemma:OrReverse}
Let ${\bf x} \in \mathbb{T}_{\alpha} \cap \mathbb{T}_{\beta}$.  Then
   \begin{equation} \label{eqn:MandArelationship}
{\Af}_{\OO,\XX}({\bf x}) = \frac{1}{2}\left({\bf M}_{\OO}({\bf x}) - {\bf M}_{\XX}({\bf x}) - (n-1)\right)
   \end{equation}
\end{lemma}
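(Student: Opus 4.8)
The plan is to reduce the statement to a purely Heegaard--Floer-theoretic identity and then prove that identity by pulling everything back to the grid diagram for the lift of $K$ to $S^3$. First I would apply Proposition~\ref{prop:MaslovGrad} twice: once to $G_K$, which identifies ${\bf M}_{\OO}$ with the Maslov grading $\Mf_{\OO}$ of Subsection~\ref{subsec:GradDef}, and once to the grid diagram $G_{-K}=(T^2,\vec\alpha,\vec\beta,\XX,\OO)$ obtained by exchanging the two collections of basepoints (which represents $-K$), identifying ${\bf M}_{\XX}$ with $\Mf_{\XX}$. Here one uses Lemma~\ref{lemma:SpincCanGen} for the $\XX$--diagram to see that $\s_{\XX}(\x_{\XX})$ again corresponds to $q-1$ under $\phi$, so the normalization constant $d(p,q,q-1)+\tfrac{p-1}{p}$ is the correct one in both cases and both applications of Proposition~\ref{prop:MaslovGrad} are legitimate. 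After this reduction it suffices to show, purely inside Heegaard Floer theory, that $\Af_{\OO,\XX}(\x)=\tfrac12\bigl(\Mf_{\OO}(\x)-\Mf_{\XX}(\x)-(n-1)\bigr)$ for all $\x\in\T_{\alpha}\cap\T_{\beta}$.

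Next I would invoke Lemma~\ref{lemma:Symmetry}, $\Af_{\XX,\OO}(\x)=-\Af_{\OO,\XX}(\x)-(n-1)$, which rewrites the target identity in the equivalent symmetric form
\[\Af_{\OO,\XX}(\x)-\Af_{\XX,\OO}(\x)=\Mf_{\OO}(\x)-\Mf_{\XX}(\x).\]
The relative content of this is immediate: for $\x,\y$ with $\s_{\OO}(\x)=\s_{\OO}(\y)$, choose $\phi\in\pi_2(\x,\y)$; the Maslov index formula gives $\Mf_{\OO}(\x)-\Mf_{\OO}(\y)=\mu(\phi)-2n_{\OO}(\phi)$ and $\Mf_{\XX}(\x)-\Mf_{\XX}(\y)=\mu(\phi)-2n_{\XX}(\phi)$, while the defining property of the Alexander filtration gives $\Af_{\OO,\XX}(\x)-\Af_{\OO,\XX}(\y)=n_{\XX}(\phi)-n_{\OO}(\phi)$ and, interchanging the roles of $\OO$ and $\XX$, $\Af_{\XX,\OO}(\x)-\Af_{\XX,\OO}(\y)=n_{\OO}(\phi)-n_{\XX}(\phi)$; thus both sides of the displayed equation change by $2\bigl(n_{\XX}(\phi)-n_{\OO}(\phi)\bigr)$, so their difference is a locally constant function on $\T_\alpha\cap\T_\beta$.

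To upgrade this to an honest equality I would pass to the universal cover, using the grid diagram $G_{\widetilde K}$ for $\widetilde K\subset S^3$ of Lemma~\ref{lemma:CoverPicture} (grid number $pn$, with $\widetilde K$ a link of $\ell$ components). Corollary~\ref{cor:CompMas}, applied with the $\OO$-- and with the $\XX$--basepoints, gives $\Mf_{\OO}(\x)-\Mf_{\XX}(\x)=\tfrac1p\bigl(\Mf_{\widetilde\OO}(\widetilde\x)-\Mf_{\widetilde\XX}(\widetilde\x)\bigr)$, the additive normalization constants cancelling. On the $S^3$ side, the grid-diagram formula for the rational Alexander grading from \cite{GT0610559} together with the $S^3$ case of Lemma~\ref{lemma:Symmetry} gives $A_{\widetilde\OO,\widetilde\XX}(\widetilde\x)-A_{\widetilde\XX,\widetilde\OO}(\widetilde\x)=\Mf_{\widetilde\OO}(\widetilde\x)-\Mf_{\widetilde\XX}(\widetilde\x)$, the shift $\tfrac{pn-\ell}{2}$ dropping out of the difference. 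The remaining point is to establish the Alexander analogue of Corollary~\ref{cor:CompMas}, namely $\Af_{\OO,\XX}(\x)-\Af_{\XX,\OO}(\x)=\tfrac1p\bigl(A_{\widetilde\OO,\widetilde\XX}(\widetilde\x)-A_{\widetilde\XX,\widetilde\OO}(\widetilde\x)\bigr)$; this I would get by unwinding the definition of $\Af$ from Subsection~\ref{subsec:GradDef} in terms of Chern classes of relative $\Spin^c$ structures and rational linking numbers, using that relative $\Spin^c$ structures, their first Chern classes, and the chosen meridians all pull back naturally under $\pi\colon S^3\to L(p,q)$ and that the relevant linking numbers scale by the appropriate power of $p$ (cf.\ \cite{GT0608001},\cite{GT0701460}), with the constant introduced by the covering being symmetric under interchanging $\OO$ and $\XX$ (equivalently $K$ and $-K$) and hence cancelling in the difference. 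Chaining the three identities then yields $\Af_{\OO,\XX}(\x)-\Af_{\XX,\OO}(\x)=\Mf_{\OO}(\x)-\Mf_{\XX}(\x)$, as required.

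The main obstacle I anticipate is precisely this last step: carefully verifying the covering behaviour of the \emph{rational} Alexander grading and tracking the several normalization constants (the $\tfrac{pn-\ell}{2}$ grid shift, the $\tfrac{(2n_i-1)}{2}\PD[\mu_i]$ terms, and the constant in the cover relation) so that the symmetric roles of $\OO$ and $\XX$ force these constants to agree and drop out. Everything else is a formal consequence of Proposition~\ref{prop:MaslovGrad}, Corollary~\ref{cor:CompMas}, Lemma~\ref{lemma:Symmetry}, and the grid formulas of \cite{GT0610559}.
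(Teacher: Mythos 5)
Your overall strategy---pass to the universal cover, use the grid formulas of \cite{GT0610559} for $\widetilde{K}\subset S^3$, use the Lee--Lipshitz covering relation and Corollary~\ref{cor:CompMas} for the Maslov gradings, and invoke Lemma~\ref{lemma:Symmetry} to fix the normalization---is the same as the paper's, and most of your individual steps are sound (in particular, your care about the normalization constant when applying Proposition~\ref{prop:MaslovGrad} and Corollary~\ref{cor:CompMas} to the $\XX$--diagram is warranted and correctly resolved). However, the reordering of the argument creates a genuine gap at the step you yourself flag: the ``Alexander analogue of Corollary~\ref{cor:CompMas}.'' The only covering statement for the Alexander grading available from the cited literature (Lemma~4.2 of \cite{GT0604360}) is \emph{relative}: it tells you that each of
\[D_{\OO,\XX}:=\Af_{\OO,\XX}(\x)-\tfrac{1}{p}\Af_{\widetilde\OO,\widetilde\XX}(\widetilde\x)
\qquad\text{and}\qquad
D_{\XX,\OO}:=\Af_{\XX,\OO}(\x)-\tfrac{1}{p}\Af_{\widetilde\XX,\widetilde\OO}(\widetilde\x)\]
is independent of $\x$, but gives no information about the values of these constants. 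Your identity $\Af_{\OO,\XX}(\x)-\Af_{\XX,\OO}(\x)=\tfrac1p\bigl(\Af_{\widetilde\OO,\widetilde\XX}(\widetilde\x)-\Af_{\widetilde\XX,\widetilde\OO}(\widetilde\x)\bigr)$ is exactly the assertion $D_{\OO,\XX}=D_{\XX,\OO}$. The symmetry you appeal to only yields $D_{\XX,\OO}(K)=D_{\OO,\XX}(-K)$; concluding $D_{\OO,\XX}(K)=D_{\OO,\XX}(-K)$ requires knowing that the constant depends only on orientation-independent data (it equals $\tfrac{p-\ell}{2p}$, but you can only see that \emph{after} the fact). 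Nor can you recover it from Lemma~\ref{lemma:Symmetry} and its link analogue upstairs: combining those two symmetries gives $D_{\OO,\XX}-D_{\XX,\OO}=2D_{\OO,\XX}-\tfrac{p-\ell}{p}$, which vanishes only if you already know the absolute value of $D_{\OO,\XX}$. So as written, this step either requires carrying out in full the Chern-class/linking-number computation you defer (essentially proving an absolute covering formula for $\Af$ that is not in the cited references), or it is circular.

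The paper's ordering is designed precisely to sidestep this. It first shows, using only the \emph{relative} covering relations for both $\Mf$ and $\Af$, that Equation~\eqref{eqn:MandArelationship} holds up to a single additive constant $C$; the constant $\widetilde C=\tfrac{\ell-1}{2}$ upstairs is computed explicitly from the $\mathcal{J}$--formulas of \cite{GT0610559}, and since the resulting downstairs constant $C$ is manifestly unchanged under exchanging $\OO$ and $\XX$, Lemma~\ref{lemma:Symmetry} then forces $C=0$ in one line. If you hold Lemma~\ref{lemma:Symmetry} in reserve for this final normalization rather than spending it on the initial reformulation, your argument closes up without ever needing an absolute covering statement for the Alexander grading.
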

\noindent  A proof of the lemma implies, in particular, that the combinatorial definition of $\A$ defines a filtration on $(CF^-(L(p,q)),\partial^-)$, since this is known to be true for the rational Alexander grading $\Af$.
\end{proof}

\begin{proof}[Proof of Lemma~\ref{lemma:OrReverse}]
The articles \cite{GT0607691} and \cite{GT0610559} give combinatorial descriptions of the Maslov and Alexander gradings for knots and links in $S^3$ and prove these combinatorial definitions match the original definitions.  Thus our strategy is to pass to the universal cover of $L(p,q)$ and build a grid diagram for the lift $\widetilde{K}\subset S^3$ of $K$ corresponding to $G_K$.  We then use the behavior of $\Mf$ and $\Af$ under covers to prove that the stated relationship holds for generators in the $2n$--pointed Heegaard diagram for $L(p,q)$.

If $[K] \in H_1(L(p,q);\Z)$ has order $k$, then $\widetilde{K}$ will be a link of $\ell = \frac{p}{k}$ components.   There is a natural map 
\[\mathbb{T}_{\alpha} \cap \mathbb{T}_{\beta} \rightarrow \mathbb{T}_{\widetilde{\alpha}} \cap \mathbb{T}_{\widetilde{\beta}}\]
 which lifts every generator to the $k$--tuple of its preimages  (cf.\ the discussion following Lemma~\ref{lemma:CoverPicture}).  Let $\mathcal{G}$ denote the set of generators $\mathbb{T}_\alpha \cap \mathbb{T}_\beta$ and similarly let $\widetilde{\mathcal{G}}=\mathbb{T}_{\widetilde{\alpha}} \cap \mathbb{T}_{\widetilde{\beta}}$.  
For ${\bf x} \in \mathcal{G}$, denote its lift by $\widetilde{\bf x} \in \widetilde{\mathcal{G}}$. 
Similarly, let ${\Mf}_{\widetilde{\OO}}$, ${\Mf}_{\widetilde{\XX}}$,  and ${\Af}_{\widetilde{\OO},\widetilde{\XX}}$ denote the Maslov and Alexander gradings relative to the lifted basepoints.

We now observe that if there exists a constant $\widetilde{C} \in \Z$  independent of $\widetilde{\x} \in \widetilde{\mathcal{G}}$, such that 
\[{\Af}_{\widetilde{\OO},\widetilde{\XX}}(\widetilde{\x}) = \frac{1}{2}\left({\Mf}_{\widetilde{\OO}}(\widetilde{\x}) - {\Mf}_{\widetilde{\XX}}(\widetilde{\x}) - (pn - 1)\right) + \widetilde{C}\]
 for all $\widetilde{\x} \in \widetilde{\mathcal{G}}$, then there exists a(nother) constant $C \in \Q$, independent of ${\x} \in \mathcal{G}$, such that 
 \[{\Af}_{\OO,\XX}({\x}) = \frac{1}{2}\left({\Mf}_{\OO}({\x}) - {\Mf}_{\XX}({\x}) - (n-1)\right) + C,\]
and hence by Proposition~\ref{prop:MaslovGrad},
 \[{\Af}_{\OO,\XX}({\x}) = \frac{1}{2}\left({\M}_{\OO}({\x}) - {\M}_{\XX}({\x}) - (n-1)\right) + C.\]

The observation follows immediately from the fact that the relative ${\Af}$ and ${\Mf}$ gradings  transform in the same way under the covering operation.  More precisely, \cite{GT0608001} tells us that for any two generators ${\x}, {\y} \in \mathcal{G}$, we have: 
\[{\Mf}_{\mathbb{O}}({\x}) - {\Mf}_{\mathbb{O}}({\y}) = \frac{1}{p}({\Mf}_{\widetilde{\mathbb{O}}}(\widetilde{\x}) - {\Mf}_{\widetilde{\mathbb{O}}}(\widetilde{\y}))\] 
and
\[{\Mf}_{\mathbb{X}}({\x}) - {\Mf}_{\mathbb{X}}({\y}) = \frac{1}{p}({\Mf}_{\widetilde{\mathbb{X}}}(\widetilde{\x}) - {\Mf}_{\widetilde{\mathbb{X}}}(\widetilde{\y})).\]
Similarly, Lemma~4.2 in \cite{GT0604360} says that: 
 \[{\Af}_{\mathbb{O},\mathbb{X}}({\x}) - {\Af}_{\mathbb{O},\mathbb{X}}({\y}) = \frac{1}{p}({\Af}_{\widetilde{\mathbb{O}},\widetilde{\mathbb{X}}}(\widetilde{\x}) - {\Af}_{\widetilde{\mathbb{O}},\widetilde{\mathbb{X}}}(\widetilde{\y})).\]

Thus
\[C = {\Af}_{\OO,\XX}({\x}) - \frac{1}{2}\left({\Mf}_{\OO}({\x}) - {\Mf}_{\XX}({\x}) - (n-1)\right)\]
 is independent of ${\x} \in \mathcal{G}$.

To prove  the existence of the $\widetilde{C}$ discussed above, we will appeal to Equation~2 from \cite{GT0610559}: 
\[A^\mathfrak{f}_i(\widetilde{\x}) = \mathcal{J}(\widetilde{\x} - \frac{1}{2}(\widetilde{\XX} + \widetilde{\OO})\,\,,\,\,\widetilde{\XX}_i - \widetilde{\OO}_i) - \left(\frac{n_i - 1}{2}\right).\]
See Subsection \ref{subsec:GradDef} for the definition of the components, $A^\mathfrak{f}_i$, of the Alexander multi-grading. Here, the notation ${\OO}_i$ (resp. ${\XX}_i$) refers to the subset of ${\OO}$ (resp. ${\XX}$) corresponding to the $i$th component of the link, and $\mathcal{J}$ is the bilinear extension of a symmetrized version of $\mathcal{I}$ which was defined in the discussion immediately preceding the statement of Lemma~\ref{lemma:AbsMasGr}.

Now suppose $\widetilde{K}$ has $\ell$ components.  We have: 
\begin{align*}
{\Af}_{\widetilde{\OO},\widetilde{\XX}}(\widetilde{\x}) &= \sum_{i=1}^\ell A^\mathfrak{f}_i(\widetilde{\x})\\
 &= \mathcal{J}(\widetilde{\x} - \frac{1}{2}(\widetilde{\XX} + \widetilde{\OO})\,\,,\,\,\sum_{i=1}^\ell(\widetilde{\XX}_i - \widetilde{\OO}_i)) - \left(\frac{pn-\ell}{2}\right)\\
 &= \mathcal{J}(\widetilde{\x} - \frac{1}{2}(\widetilde{\XX} + \widetilde{\OO})\,\,,\,\, \widetilde{\XX} - \widetilde{\OO}) - \left(\frac{pn-\ell}{2}\right)\\
 &= \mathcal{J}(\widetilde{\x}, \widetilde{\XX} - \widetilde{\OO}) - \frac{1}{2}[\mathcal{J}(\widetilde{\XX},\widetilde{\XX}) - \mathcal{J}(\widetilde{\OO},\widetilde{\OO})] - \left(\frac{pn - \ell}{2}\right)\\
\end{align*}
where equality holds from line 2 to line 3 because $\mathcal{J}(A,B+C) = \mathcal{J}(A, B \cup C)$ whenever $B \cap C = \emptyset.$

We now want to verify that this differs by a constant (independent of $\widetilde{\x}$) from:
\begin{align*}
\frac{1}{2}(\M_{\widetilde{\OO}}(\widetilde{\x}) - \M_{\widetilde{\XX}}(\widetilde{\x}) - (pn-1)) &= \frac{1}{2}(\mathcal{J}(\widetilde{\x} - \widetilde{\OO},\widetilde{\x} - \widetilde{\OO}) - \mathcal{J}(\widetilde{\x} - \widetilde{\XX},\widetilde{\x} - \widetilde{\XX}) - (pn-1))\\
 &= \frac{1}{2}[- 2\mathcal{J}(\widetilde{\x},\widetilde{\OO}) + \mathcal{J}(\widetilde{\OO},\widetilde{\OO}) + 2\mathcal{J}(\widetilde{\x},\widetilde{\XX}) -\mathcal{J}(\widetilde{\XX},\widetilde{\XX}) - (pn-1)]\\
 &= \mathcal{J}(\widetilde{\x},\widetilde{\XX} - \widetilde{\OO}) - \frac{1}{2}[\mathcal{J}(\widetilde{\XX},\widetilde{\XX}) - \mathcal{J}(\widetilde{\OO},\widetilde{\OO})] - \left(\frac{pn-1}{2}\right)\\
\end{align*}

It is now clear that $$\Af_{\widetilde{\OO},\widetilde{\XX}}(\widetilde{\x}) = \frac{1}{2}(\M_{\widetilde{\OO}}(\widetilde{\x}) - \M_{\widetilde{\XX}}(\widetilde{\x}) - (pn-1)) + \widetilde{C}$$ where $\widetilde{C} = \frac{\ell -1}{2},$ which does not depend on ${\widetilde{\x}}$.

It follows that Equation~\ref{eqn:MandArelationship} holds up to an overall shift by a constant, $C$.  But, by Lemma~\ref{lemma:Symmetry} we know that $${\Af}_{\OO,\XX}({\x}) = -{\Af}_{\XX,\OO}({\x}) - (n-1),$$ which forces $C = 0$.
\end{proof}

\end{document}